\def\sym{\mathbb}
\newtheorem{thm}{Theorem}
\newtheorem{lem}{Lemma}
\newtheorem{prop}{Proposition}
\newtheorem{thmabc}{Theorem}
\title{ Voronoi summation formulae and multiplicative functions on
 permutations}
\author{Vytas Zacharovas\footnote{This paper is based on  chapters 1.1 and 1.2 of author's doctorial dissertation that was written and defended at
Vilnius University
in 2004 under supervision of prof. E. Manstavi\v{c}ius. The final version of the paper was written during
the authors work at Institute of Statistical Science, Academia Sinica (Taiwan).}
\\
    Department of Mathematics and Informatics\\
    Vilnius University\\
    Naugarduko 24,  Vilnius\\
    Lithuania
    }
\begin{document}

\maketitle

\begin{abstract}
We prove a Tauberian theorem for the Voronoi summation method of
divergent series with an estimate of the remainder term. The results
on the Voronoi summability are then applied to analyze the mean
values of multiplicative functions on random permutations.
\end{abstract}

\noindent \emph{Key words}: Tauberian theorems, divergent series,
Voronoi summability, N\"{o}rlund summability, symmetric group,
random permutations, additive functions, multiplicative functions,
Berry-Esseen bound.

\section {Introduction}
\subsection{Tauberian theorem for Voronoi summation method}
The classical result of Abel states that if  an infinite series $\sum_{n=0}^\infty a_n$  converges and its sum is equal to $A$ then there exists the limit
\begin{equation}
\label{limit}
\lim_{x\nearrow 1} \sum_{n=0}^\infty a_nx^n=A.
\end{equation}
The converse statement is not true, as can be seen by considering the  series $\sum_{n=0}^\infty (-1)^n$, which is divergent in spite of   existence of  the limit    \(
\lim_{x\nearrow 1} \sum_{n=0}^\infty (-1)^nx^n=1/2
\).
Tauber \cite{tauber} in 1897 proved that if in addition to existence of the limit (\ref{limit}) the coefficients of the infinite series  satisfy condition
\begin{equation}
\label{tauber's_sum}
\sum_{k=0}^n ka_k=o(n)
\end{equation}
as $n\to \infty$, then the series $\sum_{n=0}^\infty a_n$ converges and its sum is equal to $A$. Moreover
the  Tauber's condition (\ref{tauber's_sum}) is in fact necessary for the convergence of the series. This Tauber's result has given rise to the whole class of so called Tauberian theorems. See book \cite{korevaar_tauberian_book} for the review of the subject.

It can be  shown that Tauber's condition
(\ref{tauber's_sum}) imposed on coefficients $a_j$ alone is enough
to provide an asymptotic estimate for partial sums of the formal
series
\[
\sum_{k=0}^n a_k=g(e^{-1/n})+o(1),
\]
where $g(z)=\sum_{j=0}^\infty a_jz^j$. Note that here $g(e^{-1/n})$
does not need to have a limit as $n\to \infty$.

Voronoi (the same summation method has been later reintroduced by
N\"{o}rlund and is often named him) introduced a summation method of
divergent series which is defined by a sequence of non-negative
numbers $w_j\geqslant 0$, that are not identically equal to zero.
Suppose $\sum_{k=0}^{\infty}a_k$ is a formal series. If there exists
a limit
$$
\lim_{n\to \infty
}{\frac{w_0s_n+w_1s_{n-1}+\cdots+w_ns_0}{w_0+w_1+\cdots +w_n}}=s\in
\sym C,
$$
where $s_k=a_0+a_1+\cdots +a_k$, then we say that the series
$\sum_{k=0}^{\infty}a_k$ can be summed in the sense of Voronoi and
its Voronoi sum is equal to $s$. In such case we write
$$
(W,w_n)\sum_{k=0}^{\infty}a_k=s.
$$
If, for example, we take  $w_0=1$ and $w_j=0$, $j\geqslant 1$ then
the Voronoi summation for such choice of $w_j$  will coincide with
the usual definition of convergence of an infinite series. The
choice $w_j\equiv 1$ leads to the definition of Ces\`{a}ro $(C,1)$
summability.  We refer the reader to the classical book of Hardy
\cite{hardy_divergent} for more examples and discussions on the
subject of divergent series.

Note that the weighted average of partial  sums of the formal series
\(\sum_{j=0}^\infty a_j\) defining Voronoi summation method  can be rewritten
as
\[
\frac{w_0s_n+w_1s_{n-1}+\cdots+w_ns_0}{w_0+w_1+\cdots
+w_n}=\frac{1}{W_n}\sum_{k=0}^na_kW_{n-k},
\]
where $W_j=w_0+w_1+\cdots+w_j$ are positive numbers. In what follows
we will refer to the above weighted average of partial sums $s_j$ as
\emph{Voronoi mean}. Thus given a fixed sequence of positive numbers
$W_j$, a natural question arises, what would be the generalization
of the classical Tauber's condition (\ref{tauber's_sum}) on $a_k$
that would imply the following asymptotic for Voronoi mean
\begin{equation}
\label{voronoi_asymtp}
\frac{1}{W_n}\sum_{k=0}^na_kW_{n-k}=g(e^{-1/n})+o(1),
\end{equation}
as $n\to \infty$, where as before $g(x)=\sum_{k=0}^\infty a_kx^k$?
We provide a partial answer to this question  for the the class of
sequences $W_j$ whose generating function is of the form
\[
\sum_{j=0}^\infty W_jz^j=\exp\left\{\sum_{k=1}^\infty
\frac{u_k}{k}z^k\right\}
\]
with $0<u^- \leqslant u_k\leqslant u^+<\infty$. We will show that for
this class of Voronoi methods, if $a_k$ satisfy condition
\begin{equation}
\label{voronoi_tauberian_condition}
\frac{1}{W_n}\sum_{k=0}^nka_kW_{n-k}=o(n)
\end{equation}
as $n\to \infty$, then the asymptotic (\ref{voronoi_asymtp}) for Voronoi means holds.
Note that the sequence $W_n$ satisfying our condition imposed on the
form of its generating function does not need to be increasing,
unlike the sequences arising from the definition of Voronoi
summation, in which case $W_n$ as a partial sum of $w_j$ should be
increasing $W_{n+1}-W_n=w_{n+1}\geqslant 0$. This class  is large enough to contain the class of Ces\`{a}ro
methods with parameter $\theta>-1$ (see \cite{hardy_divergent} for
definition). An open question remains how far can we expand the
class of Voronoi methods so that the condition
(\ref{voronoi_tauberian_condition}) on $a_n$ would guarantee the
asymptotic (\ref{voronoi_asymtp}) for Voronoi means.

The central part of our argument is the inequality of the following
theorem that allows us to estimate the error term in the asymptotic
of Voronoi means (\ref{voronoi_asymtp}) in terms of sums
$\sum_{k=0}^nka_kW_{n-k}$.

\begin{thm}
\label{fundthm1} Suppose $g(z)=\sum_{n=0}^{\infty}a_nz^n$ is an analytic function for
$|z|<1$ and  $p_j$ is a sequence of positive numbers  that is defined by means
of its generating function
\[
p(z)=\sum_{j=0}^\infty
 p_jz^j=\exp \Bigl\{
 \sum_{k=1}^{\infty} {\frac{d_k}k}z^k \Bigr\},
\]
where $d_j$ are positive numbers bounded from above and below
$0<d^-\leqslant d_j\leqslant d^+$. Then  there
exists a positive constant   $c=c(d^+,d^-)$, which depends on $d^+$ and $d^-$
only, such that for all $n\geqslant 1$ holds the inequality
\begin{equation}
\label{fund_ineq}
\begin{split}
&\left|{\frac{1}{p_n}}\sum_{k=0}^na_kp_{n-k}-g(e^{-1/n})-{\frac{S(g;n)}{np_n}}\right|
\\
&\quad\leqslant c\left( {\frac{1}{n^\theta}}\sum_{j=1}^n
{\frac{|S(g;j)|}{p_j}}j^{\theta -2}+ {\frac{1}{p(e^{-1/n
})}}\sum_{j>n}\frac{|S(g;j)|}{j}e^{-j/n}\right),
\end{split}
\end{equation}
where
\[
S(g;j)=\sum_{k=0}^ja_kkp_{j-k}
\]
and $\theta =\min \{ d^-,1\}$.
\end{thm}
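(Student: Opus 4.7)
The plan is to rewrite the left-hand side of \eqref{fund_ineq} (times $p_n$) as an explicit linear combination of the values $S(g;j)$, $j\ge 1$, then split the combination at $j=n$, so that one piece reproduces Term A and the other Term B; the two-sided hypothesis $d^-\le d_k\le d^+$ is then used to bound the two resulting kernels. Starting from the defining identity $\sum_{j\ge 1}S(g;j)z^j=zg'(z)p(z)$, I divide by $p(z)$ and read off the $z^k$-coefficient to obtain the inversion $ka_k=\sum_{j=1}^k S(g;j)\,h_{k-j}$ for $k\ge 1$, where $h_\ell$ are the Taylor coefficients of $1/p(z)$. Substituting this expression for $a_k$ into $\sum_{k=0}^n a_k p_{n-k}$ and, separately, into $g(e^{-1/n})=\sum_{k\ge 0}a_k e^{-k/n}$, interchanging orders of summation, and applying the Beta identity $\tfrac{1}{j+\ell}=\int_0^1 t^{j+\ell-1}\,dt$ to the inner sums, one expects the representation
\[
\sum_{k=0}^n a_k p_{n-k} - g(e^{-1/n})\,p_n - \tfrac{S(g;n)}{n}
\;=\; \sum_{j=1}^{n-1}S(g;j)\,\bigl(K^{(n)}_j-p_nL_j\bigr)\;-\;p_n\sum_{j\ge n}S(g;j)\,L_j,
\]
with the explicit kernels $K^{(n)}_j:=\sum_{k=j}^{n} h_{k-j}\,p_{n-k}/k$ and $L_j:=\int_0^r u^{j-1}/p(u)\,du$, where $r=e^{-1/n}$. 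The subtracted $S(g;n)/n$ is absorbed into the diagonal term $S(g;n)\,K^{(n)}_n$ since $K^{(n)}_n=1/n$.

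For the tail $j\ge n$ the substitution $u=e^{-w/n}$ converts $L_j$ into $\tfrac{1}{n}\int_1^\infty e^{-jw/n}/p(e^{-w/n})\,dw$; the hypothesis $d^-\le d_k\le d^+$ produces two-sided bounds on $p(e^{-w/n})$ of order $(n/w)^{d}$ for $w$ in a bounded range, so a routine Laplace estimate then yields $L_j\le C\,e^{-j/n}/(j\,p(e^{-1/n}))$ uniformly for $j\ge n$, with $C=C(d^-,d^+)$; multiplying by $p_n$ and summing over $j\ge n$ reproduces $p_n$ times Term B. For the head $1\le j<n$ the crucial estimate is the cancellation bound
\[
|K^{(n)}_j - p_n L_j| \;\le\; C\,\frac{p_n}{p_j}\cdot\frac{j^{\theta-2}}{n^{\theta}},\qquad \theta=\min\{d^-,1\},
\]
which should be proved by exploiting the identities $\sum_{m+k=M}h_mp_k=0$ and $\sum_{m+k=M}m\,h_m p_k=-d_M$ for $M\ge 1$ (both coming from differentiating the relation $p\cdot h=1$), the two-sided polynomial bounds $c_1 m^{d^-\!-\!1}\le p_m\le c_2 m^{d^+\!-\!1}$ derived from the recursion $m p_m=\sum_i d_i p_{m-i}$, and the Beta integral $\int_0^1 t^{j-1}(1-t)^{\theta-1}\,dt\sim\Gamma(\theta)\,j^{-\theta}$. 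Summing over $j$ then reproduces $p_n$ times Term A.

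The main obstacle is this last step: both $K^{(n)}_j$ and $p_n L_j$ are of the same non-trivial magnitude, so one must exploit the cancellation identities above to at least two orders in $1/n$ in order to extract the additional factor $j^{\theta-2}/n^\theta$; a bare triangle-inequality bound would be far too weak. The exponent $\theta=\min\{d^-,1\}$ is essentially optimal within this framework: the choice $\theta=d^-$ captures the polynomial roughness of $p$ at $z=1$ when $d^-<1$, whereas the clipping $\theta=1$ when $d^-\ge 1$ reflects the fact that no faster decay in $j$ is available from the general two-sided hypothesis on $d_k$ alone.
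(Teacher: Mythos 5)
Your reduction is exactly the one the paper uses: the inversion $ka_k=\sum_{j\leqslant k}S(g;j)h_{k-j}$ with $h=1/p$, the interchange of summation, the kernels $K^{(n)}_j=\sum_{s=0}^{n-j}h_sp_{n-j-s}/(s+j)$ and $L_j=\int_0^{e^{-1/n}}u^{j-1}/p(u)\,du$, the absorption of $S(g;n)/n$ into the diagonal via $K^{(n)}_n=1/n$, and the split at $j=n$. Your tail estimate $L_j\ll e^{-j/n}/(j\,p(e^{-1/n}))$ for $j\geqslant n$ is correct and is proved in the paper by essentially the Laplace-type argument you describe. Your target bound for the head kernel, $|K^{(n)}_j-p_nL_j|\ll (p_n/p_j)\,j^{\theta-2}n^{-\theta}$, is also the right one. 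But that bound \emph{is} the theorem --- the paper spends five lemmas on it --- and you have not proved it; you have only named ingredients that ``should'' yield it. This is a genuine gap, and the ingredients you list do not suffice. In particular, the two-sided bounds $c_1m^{d^--1}\leqslant p_m\leqslant c_2m^{d^+-1}$ are useless for extracting cancellation: applying the upper bound to one term and the lower bound to the other loses a factor $m^{d^+-d^-}$, which is fatal precisely because $K^{(n)}_j$ and $p_nL_j$ agree to leading order. What is actually required is a \emph{regularity} statement for the sequence $p_m$ itself, namely $|p_{m+s}-p_m|\ll p_m(s/m)^{\theta}$ for $s\leqslant m/2$, together with an analogous difference estimate for the coefficients $g_{m,x}$ of $p(z)/p(xz)$; neither follows from the identities $\sum_{m+k=M}h_mp_k=0$, $\sum_{m+k=M}mh_mp_k=-d_M$ combined with crude size bounds.

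For comparison, the paper's route to the head estimate is: write $K^{(n)}_j=\int_0^1 g_{n-j,x}x^{j-1}\,dx$ where $p(z)/p(xz)=\sum_m g_{m,x}z^m$; bound $\int_0^{e^{-1/n}}\bigl|g_{n-j,x}-p_{n-j}/p(x)\bigr|x^{j-1}\,dx$ by comparing $g_{n-j,x}$ with the convolution $\sum_k p_kx^kg_{n-j-k,x}=p_{n-j}$ and invoking a smoothness bound for $g_{m,x}-g_{m-k,x}$; then handle the leftover $(p_{n-j}-p_n)L_j$ with the smoothness bound for $p_m$. Both smoothness bounds rest on the factorization $p(z)=\tilde p(z)(1-z)^{-\theta}$ with $\tilde p$ having nonnegative coefficients, and on a Flajolet--Odlyzko contour-integration estimate for the explicit singular factor $\bigl((1-zx)/(1-z)\bigr)^{\theta}$, giving $|c_{m,x}-c_{m-s,x}|\ll sm^{\theta-2}(1-x)^{\theta}+s/m^2$; this is where $\theta=\min\{d^-,1\}$ genuinely enters. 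If you wish to stay purely discrete you must find a substitute for these difference estimates; until then the head bound, and hence the theorem, remains unproved in your write-up.
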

A simple consequence of the above theorem is the direct
generalization of Tauber's theorem for our class of Voronoi summation methods.
\begin{thm}
\label{voronmean} Let $g(z)$ and $p(z)$ be the same as in Theorem
\ref{fundthm1}. Then the  relation
$$
\lim_{n \to \infty}{\frac{1}{p_n}}\sum_{k=0}^{n}a_kp_{n-k}=A\in \sym
C
$$
holds if and only if the following two conditions are satisfied:
\begin{enumerate}
\item
\begin{equation}
\label{first_cond}
 \lim_{x\uparrow 1}g(x)= A,
\end{equation}
\item
\begin{equation}
\label{second_cond} \sum_{k=0}^{n}a_kkp_{n-k}=o(np_n) \quad
\hbox{as}\quad n\to \infty .
\end{equation}
\end{enumerate}
\end{thm}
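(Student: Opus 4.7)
The plan is to prove sufficiency by directly applying the inequality of Theorem \ref{fundthm1}, and to prove necessity by elementary generating-function manipulations combined with a Silverman--Toeplitz type argument. For sufficiency, assume \eqref{first_cond} and \eqref{second_cond}. Condition \eqref{second_cond} says that for every $\epsilon>0$ there is $N_0$ with $|S(g;j)|\leq \epsilon j p_j$ whenever $j\geq N_0$. Using this to bound the two error terms on the right of \eqref{fund_ineq}, the first term splits at $N_0$ into a head contribution of size $O(n^{-\theta})\to 0$ and a tail bounded by $\epsilon n^{-\theta}\sum_{j=N_0+1}^n j^{\theta-1}=O(\epsilon)$ (valid in both regimes $\theta<1$ and $\theta=1$). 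The second term is bounded by $\epsilon$ since $\sum_{j>n}p_je^{-j/n}\leq p(e^{-1/n})$. Combined with $g(e^{-1/n})\to A$ from \eqref{first_cond} and $S(g;n)/(np_n)\to 0$ from \eqref{second_cond}, this yields convergence of the Voronoi mean to $A$.

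For necessity of \eqref{first_cond}, I would write $g(x)p(x)=\sum_n V_n x^n$, where $V_n=\sum_{k=0}^n a_kp_{n-k}$, so that
\[
g(x)-A=\frac{1}{p(x)}\sum_{n=0}^\infty (T_n-A)\,p_n x^n,\qquad T_n=V_n/p_n.
\]
Since the Voronoi mean $T_n\to A$ by hypothesis, a standard truncation argument (splitting the sum at a large $N$, bounding the head by a constant and the tail by $\epsilon p(x)$) yields $g(x)\to A$, provided $p(x)\to\infty$ as $x\uparrow 1$. The latter follows from $d_k\geq d^->0$, since then $\log p(x)\geq d^-\sum_k x^k/k=-d^-\log(1-x)\to\infty$.

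For necessity of \eqref{second_cond}, I would exploit the logarithmic derivative of $p(z)$. From $p(z)=\exp\{\sum_k d_k z^k/k\}$ one has $zp'(z)=p(z)\sum_k d_k z^k$, so $np_n=\sum_{k=1}^n d_k p_{n-k}$. Writing $k=n-(n-k)$ in the definition of $S(g;n)$ and applying this identity to each $(n-k)p_{n-k}$ yields the decomposition
\[
S(g;n)=nV_n-\sum_{l=1}^n d_l V_{n-l}.
\]
Dividing by $np_n$ and normalising with the same identity gives
\[
\frac{S(g;n)}{np_n}=T_n-\sum_{l=1}^n\frac{d_l p_{n-l}}{np_n}\,T_{n-l}.
\]
The right-hand side is $T_n$ minus a Toeplitz transform of the sequence $(T_m)$, whose matrix is regular: row sums equal $1$, the entries are nonnegative, and each column tends to zero because $np_n\geq d^-\sum_{j<n}p_j\to\infty$ (the divergence of $\sum p_j$ follows from $p(x)\to\infty$). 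Hence the weighted average tends to the same limit $A$ as $T_n$, and their difference tends to $0$, establishing \eqref{second_cond}. The main technical obstacle I anticipate is checking that the $O(\epsilon)$ bound for the first error term in \eqref{fund_ineq} holds uniformly in the borderline case $\theta=1$, and carefully verifying the Silverman--Toeplitz regularity in the necessity direction; both are routine but require care.
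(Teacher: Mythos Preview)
Your proposal is correct and follows essentially the same route as the paper: sufficiency via the inequality of Theorem~\ref{fundthm1}, necessity of \eqref{first_cond} by writing $g(x)=(gp)(x)/p(x)$ and using $p(x)\to\infty$, and necessity of \eqref{second_cond} through the identity $S(g;n)=nV_n-\sum_{l=1}^n d_l V_{n-l}$ (which the paper obtains via the generating-function relation $zg'p=z(pg)'-pg\sum d_kz^k$ rather than your combinatorial rearrangement, but it is the same identity) together with the same Toeplitz-type averaging argument. The borderline $\theta=1$ case and the regularity verification you flag are indeed routine and cause no trouble.
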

\subsection{Application to analysis of generating functions}
In what follows, for any  analytic function $H(z)=\sum_{j=0}^\infty
H_jz^j$ we will denote by $[z^n]H(z)$ its $n$-th Taylor coefficient
$H_n$. We can apply our results on Voronoi summation method to
analysis of generating functions in the following way. Suppose we
want to analyze the asymptotic behavior of the coefficients
$[z^n]F(z)$ in the Taylor expansion of the generating function
$F(z)$, which is analytic in the unit disc $|z|<1$. Suppose we can
decompose the generating function $F(z)$ as a product of two functions
\[
F(z)=W(z)g(z),
\]
where $W(z)=\sum_{j=0}^\infty W_jz^j$ is a function with positive
Taylor coefficients $W_j>0$ for $j\geqslant 1$ with $W_0=1$ and such
that the coefficients in  the Taylor expansion of its logarithmic
derivative
 are bounded from above and below
\[
0<u^-\leqslant [z^n]\frac{ W'(z)}{W(z)}\leqslant u^+
\]
for all $n\geqslant 0$  by some fixed positive constants
$u^+$,$u^-$. If in addition to the above restrictions on $W(z)$,  our
Tauberian condition (\ref{voronoi_tauberian_condition}), expressed
in terms of  generating functions as
\[
[z^{n-1}]W(z)g'(z)=o(nW_n),
\]
is satisfied, then by the inequality (\ref{fund_ineq}) of Theorem \ref{fundthm1} we get
an estimate for the Taylor coefficients
\[
[z^n]F(z)=[z^{n}]W(z)g(z)=W_n\bigl(g(e^{-1/n})+o(1)\bigr),
\]
as $n\to \infty$.
This approach can be compared with the other standard technique for
analyzing asymptotic behavior of the Taylor coefficients of analytic
functions. It is based on representing $[z^n]F(z)$ as a Cauchy
integral
\begin{equation}
\label{cauchy_integral}
[z^n]F(z)=\frac{1}{2\pi
i}\int_{|z|=r}\frac{F(z)}{z^{n+1}}\,dz.
\end{equation}
The further analysis depends on the amount and the type of
information that we have on the behavior  of $F(z)$ near its
singularities and whether or not function $F(z)$ can be analytically
extended to some area beyond the radius of convergence of its Taylor series.
Flajolet and
Odlyzko \cite{flajolet_odlyzko_1990} analyzed the case when $F(z)$
can be decomposed as $F(z)=W(z)g(z)$ where
$W(z)=\frac1{(1-z)^\theta}=\sum_{n=0}^\infty\binom{n+\theta-1}{n}z^n$
and $g(z)$ is an analytic function in the circle $|z|<
1+\varepsilon$, where $\varepsilon>0$. They proved that
\[
[z^n]\frac{1}{(1-z)^\theta}g(z)=\binom{n+\theta-1}{n}g(1)\bigl(1+O(n^{-1})\bigr).
\]
The analysis of the Cauchy integral (\ref{cauchy_integral}) usually
becomes considerably more difficult if we do not know anything about
the analytic extension of generating function $F(z)$ beyond the unit
circle $|z|<1$. This is exactly the case with the generating
function of the mean values of multiplicative functions on
permutations, which is the main object of application of our theorems
for Voronoi sums. Manstavi\v{c}ius in a series of papers
\cite{manst_berry_1998}, \cite{manst_tauber_1999},
\cite{manst_decomposable_2002}  and \cite{manst_additive_1996} used
a technique based on Hal\'{a}sz'es\cite{halasz_1968} ideas for
investigating asymptotic behavior of such Cauchy integrals. See also
\cite{babu_manst_zakh_2007} for a modified version of this approach.
The approach we use here   exploits the fact that in the case of
random permutations, the generating functions we consider are such
that a simple upper bound can be obtained for the quantity
$[z^{n-1}]W(z)g'(z)$ when $W(z)g(z)$ is the appropriate
decomposition of the generating function,
 thus immediately
leading to the asymptotic of type
$[z^n]W(z)g(z)=W_n\bigl(g(e^{-1/n})+o(1)\bigr)$. The advantage of
such an approach is that it allows us to avoid the analysis of
function $F(z)$ for complex values of $z$, which is particulary hard
to do since such analysis requires estimating certain complicated
trigonometric sums. We only use the information on the behavior of
$g(x)$ for the real values $x$ that are close to $1$.  The same
approach has already been used in our papers \cite{zakh_cesaro_2001}
and \cite{zakh_palanga_2001} to analyze the distribution of additive
and multiplicative functions with respect to Ewens measure.

\subsection{Random permutations}
Let $S_n$ be the symmetric group. Recall that $S_n$ is composed of
all possible functions that bijectively map the set of first $n$
integers $\{1,2,\ldots,n\}$ into itself. Such functions are also
called permutations. Every permutation $\sigma$ belonging to the
symmetric group $S_n$ can be represented as an oriented graph,
containing $n$ vertices that are labeled by natural numbers
$1,2,\ldots, n$, and $n$ edges, each edge corresponding to a pair
$(j,\sigma(j))$, starting at vertex $j$ and pointing to vertex
$\sigma(j)$. Such graphs are characterized by the property that
every edge has one and only one outgoing edge and one and only one
incoming edge. It is easy to realize that each such a graph consists
only of cyclical components. For example, let us consider
permutation
\begin{equation}
\label{example_of_sigma} \lambda=\left(
\begin{array}{ccccccccccccccc}
1 & 2 & 3 & 4 & 5 & 6 & 7 & 8 & 9 & 10 & 11 & 12 & 13 & 14 & 15
\\
4 & 2 & 9 & 6 & 7 & 10 & 8 & 5 & 3 & 1 & 11 & 12 & 15 & 13 & 14
\end{array} \right)
\end{equation}
belonging to $S_{15}$, written in its usual representation as a
table consisting of two rows. The upper row contains the numbers
$1,2,\ldots,15$ and the lower row consists of their images
$\lambda(1),\lambda(2),\ldots,\lambda(15)$. Such permutation
corresponds to the graph consisting of seven cyclical components.

\begin{center}
\begin{tikzpicture}
 \node (1) at (0,0){1};
 \node (2) at(0,1){4};
 \node (3) at (1,1){6};
 \node (4) at (1,0){10};

 \node (5) at (2,0){5};
 \node (6) at (2.5,0.86){7};
 \node (7) at (3,0){8};

 \node (8) at (6,0.5){3};
 \node (9) at (7,0.5){9};

 \node (10) at (1.5,-1){2}edge[in=20,out=60,loop]();
 \node (11) at (3.5,-1){11}edge[in=20,out=60,loop]();
 \node (15) at (5.5,-1){12}edge[in=20,out=60,loop]();

 \node (12) at (4,0){13};
 \node (13) at (4.5,0.86){15};
 \node (14) at (5,0){14};

 \draw[->] (1) to (2);
 \draw[->] (2) to (3);
 \draw[->] (3) to (4);
 \draw[->] (4) to (1);
 \draw[->] (5) to (6);
 \draw[->] (6) to (7);
 \draw[->] (7) to (5);
 \draw[->] (8) to [bend left=45](9);
 \draw[->] (9) to [bend left=45](8);
 \draw[->] (12) to (13);
 \draw[->] (13) to (14);
 \draw[->] (14) to (12);

\end{tikzpicture}
\end{center}
Following \cite{manst_additive_1996} we will consider the classes of
additive and multiplicative  functions on permutations whose values
are determined by the decomposition of permutations into cyclical
components. These functions are defined as follows. Suppose we have
$n$ real numbers $\hat{h}(1),\hat{h}(2),\ldots,\hat{h}(n)$, then for
each permutation $\sigma\in S_n$ we can compose a sum $h(\sigma)$
over all cycles in the graph of $\sigma$ so that  for each cycle of
length $j$ we add one summand $\hat{h}(j)$. Or equivalently,
\[
h(\sigma)=\hat{h}(1)\alpha_1(\sigma)+\hat{h}(2)\alpha_2(\sigma)+\cdots+\hat{h}(n)\alpha_n(\sigma),
\]
where $\alpha_j(\sigma)$ is the number of cycles of length $j$ in
permutation $\sigma$. For example $15$ numbers
$\hat{h}(1),\hat{h}(2),\ldots,\hat{h}(15)$ will completely define
the value of additive function $h(\sigma)$ on all $\sigma \in
S_{15}$. In particular, our permutation $\lambda\in S_{15}$ contains
one cycle of length $4$, two cycles of length $3$, one cycle of
length $2$ and three cycles of length $1$, therefore
\[
h(\lambda)=3\hat{h}(1)+\hat{h}(2)+2\hat{h}(3)+\hat{h}(4).
\]
A simple example of an additive function is obtained if all
$\hat{h}(j)$  are equal to $1$.  The resulting additive function
$w(\sigma)$ is then equal to the total number of cycles of in the graph
of $\sigma$. For our example (\ref{example_of_sigma}) we have
$w(\lambda)=7$. Goncharov \cite{goncharov} was the first to analyze
the limit distribution of $w(\sigma)$. He proved that the number of
permutations $\sigma \in S_n$ satisfying inequality
$\frac{w(\sigma)-\log n}{\sqrt{\log n}}<x$ divided by the total
number of permutations $|S_n|=n!$ converges to
$\frac{1}{\sqrt{2\pi}}\int_{-\infty}^xe^{-u^2/2}\,du$ as $n\to
\infty$.

In a similar way we define multiplicative functions on the symmetric
group $S_n$. Suppose we are given $n$ complex numbers
$\hat{f}(1),\hat{f}(2),\ldots,\hat{f}(n)$. Then for each permutation
$\sigma\in S_n$ we can assign a product $f(\sigma)$ over all cycles
belonging to the oriented graph
 of $\sigma$ that contains one multiplicand $\hat{f}(j)$ corresponding to
 every cycle of size $j$ belonging to $\sigma$. In other words
\[
f(\sigma)=\hat{f}(1)^{\alpha_1(\sigma)}\hat{f}(2)^{\alpha_2(\sigma)}\cdots\hat{f}(n)^{\alpha_n(\sigma)},
\]
where we assume $0^0=1$. For our example  (\ref{example_of_sigma})
of $\lambda\in S_{15}$ we have
\[
f(\lambda)=\hat{f}(1)^{3}\hat{f}(2)\hat{f}(3)^2\hat{f}(4).
\]

Suppose $d(\sigma )$ is a non-negative multiplicative function,
which is not identically equal to zero. Then we can define
 a probabilistic measure $\nu_{n,d}$ on $S_n$ by the formula
\begin{equation}
\label{meas} \nu_{n,d}(\sigma)=\frac{d(\sigma )}{\sum_{\tau \in S_n}
d(\tau )}.
\end{equation}
The simplest and the most natural choice is to put $\hat
d(j)\equiv1$, which leads to the uniform probability measure
\[
\nu_{n}^{(1)}(\sigma)=\frac{1}{n!}.
\]
Thus Goncharov's result can be expressed in probabilistic terms as a
limit theorem
\[
\nu_{n}^{(1)}\left(\frac{w(\sigma)-\log n}{\sqrt{\log
n}}<x\right)\to
\frac{1}{\sqrt{2\pi}}\int_{-\infty}^xe^{-u^2/2}\,du,\quad
\hbox{as}\quad n\to \infty,
\]
stating that the number of cycles $w(\sigma)$ in permutation
$\sigma$ chosen with equal probability among all the permutations of
the symmetric group $S_n$ is asymptotically normally distributed.

 More generally if all $\hat d(j)$ are equal $\hat d(j)\equiv
\theta
>0$ then $d(\sigma)=\theta^{w(\sigma)}$, thus we obtain the so called
Ewens probability measure
\[
\nu_{n}^{(\theta)}(\sigma)=\frac{\theta^{w(\sigma)}}{\sum_{\tau\in S_n}\theta^{\omega(\tau)}}=\frac{\theta^{w(\sigma)}}{\theta(\theta+1)\cdots
(\theta+n-1)}.
\]
Let us denote by $M_n^d(f)$ a weighted mean of a
multiplicative function $f:S_n \to \sym C$ with respect to the
measure $\nu_{n,d}(\sigma)$:
$$
M_n^d(f)=\sum_{\sigma \in S_n}f(\sigma )\nu_{n,d}(\sigma )=
{\frac{\sum_{\sigma \in S_n}f(\sigma )d(\sigma )}{\sum_{\sigma \in
S_n}d(\sigma )}}.
$$
In 2002 Manstavi\v cius proved the following result.
\begin{thmabc}[\cite{manst_decomposable_2002}]
\label{t_manstdecomp} Let $f:S_n \to \sym C$ be a multiplicative
function, such that $|f(\sigma )|\leqslant 1$, satisfying the
conditions:
\begin{equation}
\label{bound} \sum_{j\leqslant n}{\frac{1 -\Re \hat
f(j)}{j}}\leqslant D
\end{equation}
and
$$
\frac{1}{n}\sum_{j=1}^n |\hat f (j) -1| \leqslant \mu_n =o(1),
$$
for some positive constant $D$ and some sequence $\mu_n$.

Suppose that the measure defining multiplicative
function $d(\sigma)$ satisfies the condition
 $0<d^-\leqslant \hat{d}(j)\leqslant d^+$ for all $j\geqslant 1$,  with some fixed positive
constants $d^-$ and $d^+$, then there exist  positive constants
$c_1=c_1(d^-,d^+)$ and $c_2=c_2(d^-,d^+)$  such that
$$
M_n^d(f)=\exp \left\{ \sum_{j\leqslant n}{\hat{d}(j){\frac{\hat
f(j)-1}{j}}} \right\} +O\left(  \mu_n^{c_1}+{\frac{1}{n^{c_2}}}
\right).
$$
\end{thmabc}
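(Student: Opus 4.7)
The plan is to reduce the claim to the quantitative Voronoi estimate of Theorem~\ref{fundthm1}, via the exponential (cycle-index) formula on the symmetric group. That formula gives
\[
\sum_{\sigma\in S_n}f(\sigma)d(\sigma) = n!\,[z^n]F(z),\qquad F(z)=\exp\Bigl\{\sum_{k=1}^\infty\frac{\hat f(k)\hat d(k)}{k}z^k\Bigr\},
\]
and, applied to $d$ alone, $\sum_{\tau\in S_n}d(\tau)=n!\,W_n$ with $W(z)=\sum_j W_j z^j=\exp\{\sum_{k\ge 1}\hat d(k)z^k/k\}$. Setting $g(z)=F(z)/W(z)=\exp\{\sum_{k\ge 1}\hat d(k)(\hat f(k)-1)z^k/k\}=\sum_{j\ge 0}a_j z^j$, I obtain the Voronoi-mean form
\[
M_n^d(f)=\frac{1}{W_n}\sum_{k=0}^n a_k W_{n-k},
\]
and the hypothesis $0<d^-\le\hat d(j)\le d^+$ is exactly the hypothesis of Theorem~\ref{fundthm1} with $p_j=W_j$ and $d_k=\hat d(k)$.

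Applying inequality~(\ref{fund_ineq}) gives $M_n^d(f)=g(e^{-1/n})+S(g;n)/(nW_n)+R_n$, where $S(g;j)=[z^{j-1}]W(z)g'(z)$ and $R_n$ is the right side of~(\ref{fund_ineq}). Differentiating,
\[
W(z)g'(z)=F(z)\sum_{k=1}^\infty \hat d(k)(\hat f(k)-1) z^{k-1},
\]
and since $|\hat f(k)|\le 1$ and $\hat d(k)\ge 0$ a cycle-index comparison yields $|[z^j]F(z)|\le W_j$, hence
\[
|S(g;j)|\le d^+\sum_{m=0}^{j-1}W_m\,(j-m)\mu_{j-m}.
\]
The partial-sum $l\mu_l=\sum_{k\le l}|\hat f(k)-1|$ is nondecreasing in $l$, and combining this monotonicity with the polynomial growth $W_m/W_j\ll(m/j)^{d^--1}$ implicit in the hypothesis on $W$, one can split the $j$-sums appearing in $R_n$ at a threshold $j\asymp n^{1-\delta}$; an optimal choice of $\delta$ in terms of $\theta=\min\{d^-,1\}$ then bounds both pieces by $O(\mu_n^{c_1}+n^{-c_2})$, and the tail $\sum_{j>n}$ in~(\ref{fund_ineq}) is treated identically using $W(e^{-1/n})\asymp W_n$.

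It remains to replace $g(e^{-1/n})$ by the stated exponential. Since $g$ is itself a pure exponential, this reduces to estimating
\[
\sum_{k\le n}\frac{\hat d(k)(\hat f(k)-1)}{k}(e^{-k/n}-1)+\sum_{k>n}\frac{\hat d(k)(\hat f(k)-1)}{k}e^{-k/n}.
\]
The first sum is bounded by $d^+\mu_n$ via $|e^{-k/n}-1|\le k/n$. The second is handled by the Cauchy--Schwarz inequality together with $|\hat f(k)-1|^2\le 2(1-\Re\hat f(k))$ and the convergence of $\sum_{k\ge 1}(1-\Re\hat f(k))/k$ furnished by~(\ref{bound}), yielding an error absorbed into $\mu_n^{c_1}+n^{-c_2}$ after exponentiating. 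The main obstacle is the previous paragraph: extracting a genuine power saving in $\mu_n$ from the pointwise convolution bound on $|S(g;j)|$. Since $\mu_j$ itself is not monotone, one must rely on the monotonicity of the partial sum $j\mu_j$ and balance the cut-off $\delta$ against $\theta$ so that both the small-$j$ contributions (where $\mu_j$ may be $\Theta(1)$) and the near-$n$ contributions are dominated.
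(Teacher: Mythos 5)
First, a point of orientation: the paper does not actually prove Theorem~\ref{t_manstdecomp} --- it is quoted from Manstavi\v{c}ius --- but it proves the stronger Theorem~\ref{meanf} by precisely the route you take: write $M(z)=p(z)m(z)$, apply Theorem~\ref{fundthm1} with $g=m$, and control $S(m;j)$ via $|M_k|\leqslant p_k$. So your strategy is the right one, but two of its steps do not hold up as written. The first is your bound on $S(g;j)$. From $W(z)g'(z)=F(z)\sum_k\hat d(k)(\hat f(k)-1)z^{k-1}$ and $|[z^l]F(z)|\leqslant W_l$ one gets the genuine convolution bound $|S(g;j)|\leqslant d^+\sum_{k=1}^{j}|\hat f(k)-1|\,W_{j-k}$, in which each term carries the single factor $|\hat f(j-m)-1|$. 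Your inequality replaces that factor by the full partial sum $(j-m)\mu_{j-m}=\sum_{k\leqslant j-m}|\hat f(k)-1|$; it is still a valid upper bound, but it loses a factor of order $j$: since $l\mu_l\leqslant j\mu_j$ and $\sum_{m\leqslant j}W_m\asymp jW_j$, it only yields $|S(g;j)|\ll \mu_j\, j^2W_j$, i.e.\ $S(g;j)/(jW_j)\ll j\mu_j$, which does not tend to zero, and feeding it into the right-hand side of (\ref{fund_ineq}) produces a divergent bound. One must keep the pointwise convolution bound and then --- this is the paper's key manipulation in the proof of Theorem~\ref{meanf} --- \emph{interchange the order of summation} in the resulting double sums, which collapses the right-hand side of (\ref{fund_ineq}) to the three explicit single sums of Theorem~\ref{meanf}.

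The second problem is the one you flag yourself: the conversion of those sums into $O(\mu_n^{c_1}+n^{-c_2})$ is asserted ("an optimal choice of $\delta$ then bounds both pieces"), not carried out, and it is exactly where the exponents $c_1,c_2$ come from. It is doable --- e.g.\ H\"older with exponent $p$ together with $|\hat f(k)-1|^p\leqslant 2^{p-1}|\hat f(k)-1|$ and $\sum_{m\leqslant n}p_m^q\ll np_n^q$ (Lemma~\ref{sumpn}) gives $\sum_k|\hat f(k)-1|p_{n-k}\ll n\mu_n^{1/p}p_n$, and the term $n^{-d^-}\sum_k|\hat f(k)-1|k^{d^--1}$ is handled by splitting at $k=n^{1-\delta}$ and balancing $n^{-\delta d^-}$ against $n^{\delta(1-d^-)}\mu_n$ --- but as it stands this central estimate is a gap, not a proof. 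Two smaller corrections: you should set $\hat f(k)=1$ for $k>n$ (these values do not affect $M_n$), after which the tail $\sum_{j>n}$ and the passage from $g(e^{-1/n})$ to $\exp\{L_n(1)\}$ involve no $\hat f(k)$ with $k>n$; in particular condition (\ref{bound}), which in any case only constrains $j\leqslant n$ and does not furnish convergence of $\sum_{k\geqslant1}(1-\Re\hat f(k))/k$, is never needed --- that is precisely the paper's point. Also $W(e^{-1/n})\asymp nW_n$, not $W_n$.
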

 We prove the following result.
\begin{thm}
\label{meanf} Let $f:S_n\to \sym C$ be a multiplicative function
satisfying the condition $|f(\sigma)|\leqslant 1$ for all $\sigma
\in S_n$. Suppose that the measure defining multiplicative function
$d(\sigma)$ is such that $0<d^-\leqslant \hat{d}(j) \leqslant d^+$.
Then we have
\begin{multline*}
\Delta_n:=\left| M_n^d(f)- \exp \left\{
\sum_{j=1}^{n}\hat{d}(j){\frac{\hat f(j)-1}{j}} \right\} \right|
\\
\leqslant c_1 \left( {\biggl(\sum_{j=0}^np_j
\biggr)}^{-1}\sum_{k=1}^{n}|\hat f(k) -1|p_{n-k}+ {\frac{1}{n^{d^-}
}} \sum_{k=1}^{n}|\hat f(k) -1|k^{d^- - 1}\right.
\\
\left. + {\frac{1}{n}}\sum_{k=1}^{n}|\hat f(k) -1| \right)
\end{multline*}
for $d^-<1$ and
$$ \Delta_n\leqslant c_1 \left( {\biggl(\sum_{j=0}^np_j
\biggr)}^{-1}\sum_{k=1}^{n}|\hat f(k) -1|p_{n-k}+ {\frac{1}{n }}
\sum_{k=1}^{n}|\hat f(k) -1| \left( 1+\log {\frac{n}{k}} \right)
\right)
$$
for $d^-\geqslant 1$,  where $c_1=c_1(d^-,d^+)$ is a positive
constant which depends on $d^-$ and $d^+$ only, and
$$
p_n={\frac{1}{n!}}\sum_{\sigma \in
S_n}d(\sigma)=[z^n]\exp\left\{\sum_{j=1}^\infty
\frac{\hat{d}(j)}{j}z^j\right\}
$$
\end{thm}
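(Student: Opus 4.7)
The exponential formula for cycle indices gives
\[
\frac{1}{n!}\sum_{\sigma\in S_n}d(\sigma)f(\sigma)=[z^n]\exp\Bigl\{\sum_{j\geqslant 1}\hat{d}(j)\hat{f}(j)\frac{z^j}{j}\Bigr\},
\]
so this generating function factors as $F(z)=p(z)g(z)$ with $g(z)=\exp\{\sum_{j\geqslant 1}\hat{d}(j)(\hat{f}(j)-1)z^j/j\}$. Writing $a_k=[z^k]g(z)$, the ratio $M_n^d(f)$ becomes a Voronoi mean $p_n^{-1}\sum_{k=0}^{n}a_kp_{n-k}$, to which Theorem~\ref{fundthm1} applies with $d_k=\hat{d}(k)$ and $\theta=\min(d^-,1)$. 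Since $M_n^d(f)$ depends only on $\hat{f}(1),\ldots,\hat{f}(n)$, I redefine $\hat{f}(j):=1$ for $j>n$; this makes $g$ entire and suppresses the tail of the sum in $g(e^{-1/n})$ without changing the value of $M_n^d(f)$.

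\textbf{A master bound for $S(g;j)$.} Differentiating and using $F=pg$ yields $zg'(z)p(z)=F(z)\sum_{k\geqslant 1}\hat{d}(k)(\hat{f}(k)-1)z^k$; extracting coefficients gives
\[
S(g;j)=\sum_{k=1}^{\min(j,n)}\hat{d}(k)(\hat{f}(k)-1)\,[z^{j-k}]F(z),
\]
and since $|f|\leqslant 1$ and $p$ has non-negative coefficients, $|[z^{j-k}]F(z)|\leqslant p_{j-k}$, whence
\[
|S(g;j)|\leqslant d^+\sum_{k=1}^{\min(j,n)}|\hat{f}(k)-1|\,p_{j-k}.
\]
Specialising to $j=n$ and invoking the lower bound $np_n\geqslant d^-\sum_{m<n}p_m$ (which is immediate from the recursion $np_n=\sum_k\hat{d}(k)p_{n-k}$), together with $\sum_{j\leqslant n}p_j\leqslant(1+d^+/n)\sum_{j<n}p_j$, produces the first term $(\sum_jp_j)^{-1}\sum_k|\hat{f}(k)-1|p_{n-k}$ of the theorem.

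\textbf{Controlling the two error sums of (\ref{fund_ineq}).} Interchanging summation in the tail piece $p(e^{-1/n})^{-1}\sum_{j>n}|S(g;j)|e^{-j/n}/j$ and using $1/j\leqslant 1/n$ together with $\sum_mp_me^{-m/n}=p(e^{-1/n})$ produces the clean contribution $(d^+/n)\sum_{k\leqslant n}|\hat{f}(k)-1|$. The same interchange applied to the interior piece $n^{-\theta}\sum_{j\leqslant n}j^{\theta-2}|S(g;j)|/p_j$ leaves $n^{-\theta}\sum_k|\hat{f}(k)-1|K(k,n)$ with kernel $K(k,n)=\sum_{j=k}^{n}j^{\theta-2}p_{j-k}/p_j$. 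This is where I expect the main work: combining the two-sided bounds $cn^{d^- -1}\leqslant p_n\leqslant Cn^{d^+ -1}$ (which follow by coefficientwise comparison of $p(z)$ against $(1-z)^{-d^\mp}$) with ratio estimates for $p_{j-k}/p_j$ extracted from the same recursion, and splitting the range at $j=2k$, should give $K(k,n)\leqslant Ck^{d^- -1}$ when $d^-<1$ (producing the $n^{-d^-}k^{d^- -1}$ term) and $K(k,n)\leqslant C(1+\log(n/k))$ when $d^-\geqslant 1$, the logarithm arising from the harmonic tail $\sum_{j=k}^{n}1/j$ that remains once $p_{j-k}/p_j=O(1)$ in that regime.

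\textbf{Replacing $g(e^{-1/n})$ by the target and collecting.} Because $\hat{f}(j)=1$ for $j>n$, the exponent of $g(e^{-1/n})$ is the finite sum $\sum_{j\leqslant n}\hat{d}(j)(\hat{f}(j)-1)e^{-j/n}/j$; both this exponential and the target have modulus $\leqslant 1$ since $\Re\hat{d}(j)(\hat{f}(j)-1)\leqslant 0$, so
\[
\Bigl|g(e^{-1/n})-\exp\Bigl\{\sum_{j\leqslant n}\hat{d}(j)\frac{\hat{f}(j)-1}{j}\Bigr\}\Bigr|\leqslant\frac{d^+}{n}\sum_{j\leqslant n}|\hat{f}(j)-1|
\]
by $|e^{-j/n}-1|\leqslant j/n$; this is the third term in the $d^-<1$ case and is absorbed into the $(1+\log(n/k))$ factor when $d^-\geqslant 1$. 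Summing the four contributions yields the claimed inequality. The main obstacle is the kernel analysis of the previous paragraph: the correct polynomial decay of $K(k,n)$ relies on two-sided bounds for $p_j$ that are uniform over variable sequences $\hat{d}(j)\in[d^-,d^+]$, and the split between $d^-<1$ and $d^-\geqslant 1$ is dictated precisely by whether this kernel integrates to a power of $k$ or to a harmonic logarithm.
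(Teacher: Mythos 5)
Your proposal is correct and follows essentially the same route as the paper: factor $M(z)=p(z)m(z)$, bound $|S(m;j)|\leqslant d^{+}\sum_{k\leqslant j}|\hat f(k)-1|p_{j-k}$, feed this into Theorem~\ref{fundthm1}, interchange the order of summation and split the inner range at $j=2k$, and finally replace $e^{-k/n}$ by $1$ in the exponent at a cost of $\frac{1}{n}\sum_k|\hat f(k)-1|$. The one place your sketch is thinner than the paper is the kernel bound $K(k,n)$: the crude polynomial bounds $cn^{d^--1}\leqslant p_n\leqslant Cn^{d^+-1}$ do not by themselves control $p_{j-k}/p_j$ when $d^+>d^-$ (and this ratio is in fact not uniformly $O(1)$ near $j=k$); what is actually used is $p_m\asymp p(e^{-1/m})/m$ together with the monotonicity of $p$ on $[0,1)$ for $j\geqslant 2k$, and the averaged partial-sum bound $\sum_{s\leqslant k}p_s\ll p(e^{-1/k})$ for the range $k\leqslant j<2k$, exactly as in the paper's computation.
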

Thus Theorem \ref{meanf} shows that  condition (\ref{bound}) in
Theorem A is superfluous. The inequality of our theorem also yields  more accurate
estimate of the remainder term than Theorem \ref{t_manstdecomp}.

Note that if function $h(\sigma)$ is additive then function
$\exp\bigl(it h(\sigma)\bigr)$ is multiplicative which means that
the characteristic function of an additive function with respect to
our measure (\ref{meas}) is a mean value of a multiplicative
function. It follows hence that the estimates for the mean values of
multiplicative functions allow us to obtain information on the
distribution of the values of additive functions.

 Let us denote
\[
A(n)=\sum_{k=1}^n\hat{d}(k){\frac{\hat h_{n}(k)}{k}},
\quad
C_n=\sum_{j=1}^n\hat{d}(j){\frac{\hat h_{n}(j)}{j}}\left( {\frac{p_{n-j}}{p_n}}-1
\right),
\]
and
\[
L_{n,p}=\sum_{k=1}^n{\frac{|\hat h_{n}(k)|^p}{k}},
\quad L_{n,2}'=\sum_{j=1}^n{\frac{\hat h_{n}^2(j)}{j}}\left| {\frac{p_{n-j}}{p_{n}}} -1 \right|.
\]
Henceforth we assume that $\tilde h_{n}(k)$ satisfies the
normalizing condition
\begin{equation}
\label{dnormalize} \sum_{k=1}^n\hat{d}(k) {\frac{\hat
h_{n}^2(k)}{k}}=1.
\end{equation}
\begin{thm}
\label{clt1}
Suppose $0<d^-\leqslant d_j \leqslant d^+$,
and $p$ is a fixed number  such that
 $\infty ~\geqslant~ p>\max \left\{ 2, 1/d^- \right\}$.
Suppose
\[
   F_n(x)=\nu_{n,d} \left(
           h(\sigma )-A(n)<x  \right),
\]
where $h(\sigma )$ is a additive  function satisfying the normalizing
 condition (\ref{dnormalize}). Then  we have
\[
      \sup_{x\in \sym R}\left| F_n(x)-\Phi (x)+{\frac{1}{\sqrt{2\pi}}}e^{-x^2/2}
          C_n\right|\ll L_{n,3}+L_{n,p}^{2/p}+L_{n,2}',
\]
here we assume that
\[
 L_{n,\infty }^{1/\infty}=\lim_{p\to \infty}
  L_{n,p}^{1/p}=\max_{1\leqslant j \leqslant n}|\hat h(j)|
\]
 for $p=\infty$.
 \end{thm}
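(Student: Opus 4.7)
The plan is to prove this Berry–Esseen-type estimate via Esseen's smoothing inequality. The map $\sigma \mapsto e^{ith(\sigma)}$ is multiplicative with $\hat f_t(j) = e^{it\hat h_n(j)}$, so the characteristic function of $h(\sigma) - A(n)$ under $\nu_{n,d}$ is
\[
\phi_n(t) \;=\; e^{-itA(n)}\,M_n^d(f_t).
\]
The approximating distribution $G_n(x) = \Phi(x) - \frac{1}{\sqrt{2\pi}} e^{-x^2/2} C_n$ is the first-order linearization of $\Phi(x - C_n)$, and its Fourier--Stieltjes transform is $\hat G_n(t) = e^{-t^2/2}(1 + iC_n t)$. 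Esseen's inequality then reduces matters to bounding $\int_{-T}^{T} |\phi_n(t) - \hat G_n(t)|/|t|\,dt + O(1/T)$, for a cutoff $T$ chosen so that $1/T$ is absorbed by the stated error (in practice $T\asymp L_{n,p}^{-1/p}$).

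The main step controls the integrand for $|t|$ in the Gaussian range. Applying Theorem \ref{meanf} to $f_t$, decompose $M_n^d(f_t) = Q(t) + R(t)$ with
\[
Q(t) \;=\; \exp\Bigl\{\sum_{j=1}^n \hat d(j)\frac{e^{it\hat h_n(j)} - 1}{j}\Bigr\}.
\]
Multiplication by $e^{-itA(n)}$ turns the exponent into $\sum_j \hat d(j)[e^{it\hat h_n(j)} - 1 - it\hat h_n(j)]/j$, which by the Taylor estimate $|e^{iu}-1-iu+u^2/2| \leq |u|^3/6$ together with the normalization (\ref{dnormalize}) equals $-t^2/2 + O(|t|^3 L_{n,3})$, hence $e^{-itA(n)}Q(t) = e^{-t^2/2}(1 + O(|t|^3 L_{n,3}))$. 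Writing $G_t(z) = \exp\{\sum_{j=1}^n \hat d(j)(e^{it\hat h_n(j)}-1)z^j/j\}$, the identities $M_n^d(f_t) = p_n^{-1}\sum_{k=0}^n [z^k]G_t\cdot p_{n-k}$ and $Q(t) = G_t(1) = \sum_{k\geq 0}[z^k]G_t$ give the exact formula
\[
R(t) \;=\; \sum_{k=1}^n [z^k]G_t\,(p_{n-k}/p_n - 1) \;-\; \sum_{k>n}[z^k]G_t.
\]
Since $[z^k]G_t = it\hat d(k)\hat h_n(k)/k + O(t^2)$ for $1 \leq k \leq n$ while $[z^k]G_t = O(t^2)$ for $k > n$ (because $\hat h_n$ is supported on $k \leq n$), the linear-in-$t$ part of $R(t)$ is exactly $itC_n$, and the genuine quadratic residue is dominated by $|t|^2 L_{n,2}'$ plus higher-order terms in $|t|$.

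For the complementary range the integrand is dominated through $|\phi_n(t)| \leq |M_n^d(f_t)|$, and Theorem \ref{meanf} bounds the latter by $\exp\{-2\sum_j \hat d(j)\sin^2(t\hat h_n(j)/2)/j\}$ plus the same additive error. Splitting the cycle lengths according to whether $|t\hat h_n(j)|\leq 1$ and applying $\sin^2(u/2) \geq cu^2$ on the smaller piece, while using $\sum_{j:|t\hat h_n(j)|>1}\hat d(j)\hat h_n^2(j)/j \leq d^{+}|t|^{p-2} L_{n,p}$ on the other, yields $|\phi_n(t)| \leq \exp(-ct^2(1 - d^{+}|t|^{p-2}L_{n,p}))$; the choice $T \asymp L_{n,p}^{-1/p}$ then makes the tail integral of order $L_{n,p}^{2/p}$. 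Combining the three pieces yields the stated bound. The principal obstacle is the clean extraction of the $iC_n t$ shift: Theorem \ref{meanf} provides only an absolute bound on $R(t)$, so one must work with the explicit identity above and verify that the residual $R(t) - itC_n$ really is of size $t^2 L_{n,2}'$; for a fully rigorous treatment one may prefer to invoke Theorem \ref{fundthm1} directly, since its explicit Voronoi correction $S(g;n)/(np_n)$ encodes precisely the mean shift $C_n$ and separates the $L_{n,2}'$ contribution in a conceptually cleaner way.
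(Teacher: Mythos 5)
Your overall architecture --- Esseen's smoothing inequality, the identification $\phi_n(t)=e^{-itA(n)}M_n^d(f_t)$ with $\hat f_t(j)=e^{it\hat h_n(j)}$, and a split of the integral into a Gaussian range and a tail --- is exactly the paper's. But the main analytic step is not carried out, and the route you sketch for it does not close. The paper does not extract $iC_nt$ from Theorem \ref{meanf} or from the raw coefficient identity; it first proves Theorem \ref{meanM1}, the \emph{multiplicative} expansion $M_n/p_n=\exp\{L_n(1)\}\bigl(1+\sum_j d_j\frac{\hat f(j)-1}{j}(\frac{p_{n-j}}{p_n}-1)+O(\rho^2)\bigr)$ with $\rho\leqslant |t|L_{n,p}^{1/p}$. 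The multiplicative form is essential: after multiplication by $e^{-itA(n)}$ the envelope $\exp\{L_n(1)\}$ becomes $e^{-t^2/2+O(|t|^3L_{n,3})}$, so the correction $iC_nt$ and the error $O(\rho^2)=O(t^2L_{n,p}^{2/p})$ both enter damped by $e^{-t^2/2}$, and the Esseen integral over $|t|\leqslant\delta L_{n,p}^{-1/p}$ then comes out as $O(L_{n,3}+L_{n,p}^{2/p}+L_{n,2}')$. Your additive decomposition $R(t)= iC_nt+O(t^2L_{n,2}')$ carries no such damping: comparing $e^{-itA(n)}R(t)$ with $e^{-t^2/2}iC_nt$ leaves the term $iC_nt\,(e^{-itA(n)}-e^{-t^2/2})$, whose contribution to the Esseen integral is of order $|C_n|\,L_{n,p}^{-1/p}$, and even the undamped $O(t^2L_{n,2}')$ piece integrates to $O(L_{n,p}^{-2/p}L_{n,2}')$ --- neither is controlled by the claimed bound. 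Moreover the residual estimate itself is unproved and is the hard part: the second-order piece of $[z^k]G_t$ contains the convolution $\frac12\sum_{i+j=k}d_id_j(e^{it\hat h_n(i)}-1)(e^{it\hat h_n(j)}-1)/(ij)$, and summing it against $|p_{n-k}/p_n-1|$, as well as bounding $\sum_{k>n}|[z^k]G_t|$, produces quantities of size $t^2\bigl(\sum_j|\hat h_n(j)|/j\bigr)^2\asymp t^2\log n$ that do not reduce to the stated error terms by elementary means. Taming exactly this is what the paper's Theorem \ref{meanM1} does, via Theorem \ref{fundthm1} applied to the modified function $u_N(z)=m(z)-m(e^{-1/N})\sum_j d_j\frac{\hat f(j)-1}{j}z^j$ together with the a priori bound $M_k=p_km(e^{-1/k})(1+O(\rho))$ of Proposition \ref{prop_O(rho_exp)}, itself obtained by a bootstrap. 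Your closing remark that one should ``invoke Theorem \ref{fundthm1} directly'' points in the right direction, but that is where essentially all the work lies and none of it is done.

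The tail treatment has two further concrete defects. First, Theorem \ref{meanf} cannot give $|\phi_n(t)|\ll e^{-ct^2}$ in the tail: its error is additive and only first order in $t$ (for instance $\frac1n\sum_k|\hat f_t(k)-1|\ll|t|\cdot\frac1n\sum_k|\hat h_n(k)|$, which by Cauchy--Schwarz and the normalization (\ref{dnormalize}) is in general of order $\min(|t|,1)$), so for $|t|$ of order $L_{n,p}^{-1/p}$ or larger it swamps the Gaussian main term. The paper instead proves the multiplicative upper bound of Theorem \ref{thmeanM1}, $|M_np_n^{-1}|\ll|\exp\{L_n(1)\}|\,E(u)^{d^+}$, to obtain $|\phi_n(t)|\ll e^{-c_1t^2}$ throughout $|t|\leqslant cL_{n,3}^{-1}=:T$. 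Second, your cutoff $T\asymp L_{n,p}^{-1/p}$ contributes $1/T\asymp L_{n,p}^{1/p}$ to the Esseen bound, and $L_{n,p}^{1/p}$ is \emph{not} dominated by $L_{n,3}+L_{n,p}^{2/p}+L_{n,2}'$ when $L_{n,p}$ is small; the cutoff must be taken as $T\asymp L_{n,3}^{-1}$, so that $1/T\ll L_{n,3}$, which in turn forces the use of Theorem \ref{thmeanM1} on the whole intermediate range $\delta L_{n,p}^{-1/p}<|t|\leqslant cL_{n,3}^{-1}$.
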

Theorem \ref{clt1} generalizes the corresponding result of
Manstavi\v{c}ius \cite{manst_berry_1998} that was proved for the case of
uniform measures $\hat{d}(j)\equiv 1$, later generalized for  Ewens
measures $\hat d(j)\equiv \theta >0$ in our paper
\cite{zakh_palanga_2001}.

\section{Proofs}

\subsection{Voronoi summation method}
Throughout the proofs we will routinely use a simple inequality
\begin{equation}
\label{upper_bound_for_sum} b_0+b_1+\cdots +b_n\leqslant
\sum_{k=0}^{n}b_ke^{1-k/n}  \leqslant
\sum_{k=0}^{\infty}b_ke^{1-k/n}= eb(e^{-1/n})
\end{equation}
for partial sums of coefficients of a generating function
$b(x)=\sum_{k=0}^{\infty}b_kx^k$ with nonnegative coefficients
$b_k\geqslant 0$. The next theorem shows that a similar lower bound
is also valid if the logarithmic derivative of the generating
function $b(x)$ does not grow too fast as $x\to1$.
\begin{thm}
\label{lowcoefth} Let  $b(x)=\sum_{k=0}^{\infty}b_kx^k$ be a series
with non-negative $b_k\geqslant 0$ coefficients, that converges in
the interval $x\in[0,1)$ . Suppose there exists such $c>0$ that the logarithmic derivative of $b(x)$
satisfies the inequality
\begin{equation}
\label{logarithmic_derivative}
\frac{b'(x)}{b(x)}\leqslant \frac{c}{1-x},
\end{equation}
for all $0\leqslant x <1$. Then there exists such a positive constant
$K=K(c)$ that
\[
\sum_{k=0}^Nb_k\geqslant K(c) b(e^{-1/N}),
\]
for all $N\geqslant 2c$.
\end{thm}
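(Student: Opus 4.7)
The plan is to combine a Markov-type bound on $b$ (via its log-derivative) with a rescaling argument.

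First, integrating the hypothesis $b'/b \leq c/(1-x)$ from $x$ to $y$ gives, for $0 \leq x \leq y < 1$,
\[
b(y)/b(x) \leq ((1-x)/(1-y))^c.
\]
Since $t \mapsto (1-e^{-t})/t$ is decreasing on $(0,\infty)$, this specializes to
\[
b(e^{-1/N}) \leq (N/M)^c\, b(e^{-1/M}) \qquad \text{whenever } N \geq M \geq 1.
\]

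Next, for any positive integer $M$, the hypothesis directly yields
\[
\sum_{k \geq 0} k\, b_k e^{-k/M} = e^{-1/M} b'(e^{-1/M}) \leq \frac{c}{e^{1/M}-1}\, b(e^{-1/M}) \leq cM\, b(e^{-1/M}),
\]
using $e^{1/M}-1 \geq 1/M$. Markov's inequality then yields
\[
\sum_{k < 2cM} b_k e^{-k/M} \geq \tfrac{1}{2}\, b(e^{-1/M}),
\]
and since $b_k \geq b_k e^{-k/M}$,
\[
\sum_{k=0}^{\lfloor 2cM \rfloor} b_k \geq \tfrac{1}{2}\, b(e^{-1/M}).
\]

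Finally, given $N \geq 2c$, I would set $M = \lfloor N/(2c) \rfloor \geq 1$. Then $\lfloor 2cM \rfloor \leq 2cM \leq N$, and a short case analysis shows $N/M \leq 4c$ (if $M \geq 2$ then $M \geq N/(2c)-1 \geq N/(4c)$; if $M = 1$ then $N \in [2c, 4c)$, so $N/M = N \leq 4c$). Combining the previous displays,
\[
\sum_{k=0}^{N} b_k \geq \sum_{k=0}^{\lfloor 2cM \rfloor} b_k \geq \tfrac{1}{2}\, b(e^{-1/M}) \geq \frac{1}{2(4c)^c}\, b(e^{-1/N}),
\]
so one may take $K(c) = 1/(2(4c)^c)$. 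The only delicate point is the bookkeeping for $N/M \leq 4c$ at the boundary $N \in [2c, 4c)$; I do not foresee any substantial obstacle beyond this.
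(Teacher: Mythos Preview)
Your argument is correct and follows essentially the same strategy as the paper's proof: use the bound on $b'/b$ to control the first moment $\sum_k k b_k x^k$, conclude that the tail beyond some threshold $\approx 2cM$ carries at most half of $b(x)$, and then compare $b$ at two nearby points $e^{-1/M}$ and $e^{-1/N}$ via the integrated inequality $b(y)/b(x)\le((1-x)/(1-y))^c$.

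The only real difference is parameterization. You introduce an auxiliary integer $M=\lfloor N/(2c)\rfloor$ and work at $x=e^{-1/M}$, which forces a small case analysis on $N/M\le 4c$. The paper simply plugs $x=e^{-2c/N}$ directly into the inequality
\[
b(x)\le \sum_{k\le N}b_k+\frac{x\,b'(x)}{N}\le \sum_{k\le N}b_k+\frac{cx}{N(1-x)}\,b(x),
\]
so that the coefficient in front of $b(x)$ is at most $1/2$, and then compares $b(e^{-2c/N})$ with $b(e^{-1/N})$ exactly as you do (with a separate easy case when $c\le 1/2$). This sidesteps the floor-function bookkeeping and yields the slightly better constant $K(c)=e^{-1/2}/(2(2c)^c)$ for $c>1/2$, but the underlying idea is identical to yours.
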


\begin{proof}
 Suppose $0\leqslant x < 1$ and $N\geqslant 2c$, then
\begin{eqnarray*}
b(x)&\leqslant& \sum_{k=0}^Nb_kx^k+\frac{1}{N}\sum_{k=0}^{\infty}k
b_kx^k\leqslant \sum_{k=0}^Nb_k +{\frac{x b'(x)}{N}}
\\
&\leqslant& \sum_{k=0}^Nb_k +b(x)\frac{x}{N}\frac{b'(x)}{b(x)}
\leqslant \sum_{k=0}^Nb_k +b(x)\frac{c x}{N(1-x)},
\end{eqnarray*}
here we have applied the inequality (\ref{logarithmic_derivative})
 satisfied by the logarithmic derivative of $b(x)$.
Inserting into the above inequality $x=e^{-2c/N}$ and estimating $\frac{e^{-2c/N}}{1-e^{-2c/N}}\leqslant \frac{N}{2c}$ we
obtain
\[
b(e^{-2c/N})
\leqslant
 \sum_{k=0}^Nb_k+\frac{1}{2}b(e^{-2c/N}),
\]
which leads to the inequality
\begin{equation}
\label{lower_bound_ineq}
\frac{1}{2}b(e^{-2c/N})\leqslant  \sum_{k=0}^Nb_k.
\end{equation}
 If $c\leqslant 1/2$, then $e^{-1/N}\leqslant e^{-2c/N}$ therefore
\[
\frac{1}{2}b(e^{-1/N})\leqslant \frac{1}{2}b(e^{-2c/N})\leqslant
\sum_{k=0}^Nb_k.
\]
This means that for $c\leqslant 1/2$ the theorem will be true with $K(c)=\frac12$.

Suppose now that $c>\frac{1}{2}$, then  $e^{-1/N}\geqslant e^{-2c/N}$. Let us show that the ratio $b(e^{-2c/N})/b(e^{-1/N})$ is bounded from bellow. Using the upper bound (\ref{logarithmic_derivative}) for the logarithmic derivative of $b(x)$ we get
\begin{equation*}
\begin{split}
\frac{b(e^{-2c/N})}{b(e^{-1/N})}&= \exp \left\{ \log b(e^{-2c/N})
-\log b(e^{-1/N})  \right\}= \exp \left\{
-\int_{e^{-2c/N}}^{e^{-1/N}}{\frac{b'(x)}{b(x)}}\,dx  \right\}
\\
&\geqslant \exp \left\{
-c\int_{e^{-2c/N}}^{e^{-1/N}}{\frac{dx}{1-x}} \right\} ={\left(
{\frac{1-e^{-1/N}}{1-e^{-2c/N}}} \right)}^c
\\
&\geqslant \frac{e^{-c/N}}{(2c)^c} \geqslant  \frac{e^{-1/2}}{(2c)^c},
\end{split}
\end{equation*}
since $N\geqslant 2c$. This estimate together with inequality (\ref{lower_bound_ineq}) proves
that the statement of the theorem is true with $K(c)=\frac{e^{-1/2}}{2(2c)^c}$, when $c>1/2$.

The theorem is proved.
\end{proof}
Throughout this section
 $p(z)$ will be defined as
\[
p(z)=\exp \left\{
 \sum_{k=1}^{\infty} \frac{d_k}{k}z^k \right\}=\sum_{j=0}^\infty
 p_jz^j.
\]
We will  assume that $d_k$ are bounded from above and
below by some fixed positive constants $0<d^-\leqslant d_k\leqslant
d^+$, and denote $\theta :=\min \{ 1,d^- \}$. We will also denote $\tilde
d_k=d_k-\theta$ and
$$
\tilde p(z)=\exp \left\{ \sum_{k=1}^{\infty}\frac{\tilde d_k}{k}z^k
\right\} =\sum_{n=0}^{\infty}\tilde p_nz^n.
$$
The relationship $\tilde d_k=d_k-\theta$ immediately leads to the
identity for the corresponding generating functions
$$
\tilde p(z)=\exp \left\{
 \sum_{j=1}^{\infty} \frac{d_j-\theta}{j}z^j \right\}={{p(z)}{(1-z)^{\theta}}}.
$$
In order to prove the inequality of our main Theorem \ref{fundthm1} we will need
some estimates for the asymptotic behavior of the coefficients of $p_j$ and $\tilde{p}_j$
of $p(z)$.

Differentiating $p(z)$ and $\tilde p(z)$ we conclude that these functions
satisfy differential equations
$$
zp'(z)=p(z)\sum_{k=1}^\infty d_kz^k \quad \hbox{and} \quad z\tilde
p'(z)=\tilde p(z)\sum_{k=1}^\infty \tilde d_kz^k,
$$
which lead to the recurrent relationships for the
coefficients $p_n$ and $\tilde{p}_n$ in  the Taylor expansions
of the corresponding functions
\begin{equation}
\label{p_n} p_n=\frac{1}{n}\sum_{k=1}^nd_kp_{n-k}\quad \hbox{and}
\quad \tilde
p_n=\frac{1}{n}\sum_{k=1}^n\tilde d_k{\tilde p_{n-k}},
\end{equation}
for $n\geqslant 1$. Taking the maximum of $d_k$ on the right hand
side of the above equations and  using  the inequality
(\ref{upper_bound_for_sum}) for partial sums of $p_j$ and
$\tilde{p}_j$ we obtain the inequalities
\begin{equation}
\label{p_n2} p_n\leqslant{d^+e} \frac{p(e^{-1/n})}{n}  \quad
\hbox{and} \quad \tilde p_n\leqslant {d^+e}\frac{\tilde
p(e^{-1/n})}{n}
\end{equation}
that provide an upper bound for the coefficients $p_n$ and $\tilde{p}_n$.

A similar lower bound for $p_n$ has been proven in \cite{manst_decomposable_2002},
stating that  there is a positive constant $K(d^+)$ such that
\begin{equation}
\label{plwbound}
p_n\geqslant {d^-K(d^+)}{\frac{p(e^{-1/n})}{n}}.
\end{equation}
An independent  proof of this estimate can be based on the
inequality of Theorem \ref{lowcoefth}. Indeed,  for $b(z)=p(z)$ we
see that the condition of  Theorem \ref{lowcoefth} is satisfied with
$c=d^+$ as $\frac{p'(x)}{p(x)}=\sum_{j=1}^\infty d_jx^{j-1}\leqslant
\frac{d^+}{1-x}$, for $0<x<1$. This gives us the lower bound for
partial sums of $p_j$, which together with recurrent relationships
(\ref{p_n}) satisfied by $p_n$ yields the proof of the lower bound
(\ref{plwbound}) for $p_n$.
\begin{lem}
\label{pvbound}
If $m\geqslant n\geqslant 1$, then
$$
{\left( {\frac{m}{n}}  \right) }^{d^-} e^{-{{d^-}/ {n}}}\leqslant
{\frac{p( e^{-{1/ {m}}})}{p( e^{-{1/ n} })}}\leqslant {\left(
\frac{m}{n} \right) }^{d^+} e^{{{d^+}/ {m}}},
$$
and
$$
{\left( {\frac{m}{n}}  \right) }^{\tilde d^-} e^{-{{\tilde d^-}/
{n}}} \leqslant {\frac{\tilde p( e^{-{1/ {m}}})}{\tilde p( e^{-{1/
n} })}}\leqslant {\left( \frac{m}{n}  \right) }^{\tilde d^+}
e^{{{\tilde d^+}/ {m}}},
$$
where $\tilde d_k^+=d_k^+ -\theta$, \ $\tilde d_k^-=d_k^- -\theta$
and $\tilde d^+=d^+-\theta$.
\end{lem}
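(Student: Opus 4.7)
The plan is to reduce both inequalities to a single computation at the level of logarithms. Taking logs, the ratio $p(e^{-1/m})/p(e^{-1/n})$ equals
\[
\exp\Bigl\{\sum_{k=1}^\infty \frac{d_k}{k}\bigl(e^{-k/m}-e^{-k/n}\bigr)\Bigr\}.
\]
Since $m\geqslant n$, each factor $e^{-k/m}-e^{-k/n}$ is non-negative, so the hypothesis $d^-\leqslant d_k\leqslant d^+$ yields the double inequality
\[
d^-\sum_{k=1}^\infty \frac{e^{-k/m}-e^{-k/n}}{k}\leqslant \log\frac{p(e^{-1/m})}{p(e^{-1/n})}\leqslant d^+\sum_{k=1}^\infty \frac{e^{-k/m}-e^{-k/n}}{k}.
\]

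Next I would evaluate the inner sum in closed form using $\sum_{k\geqslant 1} x^k/k=-\log(1-x)$, giving
\[
\sum_{k=1}^\infty \frac{e^{-k/m}-e^{-k/n}}{k}=\log\frac{1-e^{-1/n}}{1-e^{-1/m}}.
\]
The final ingredient is the elementary two-sided estimate $xe^{-x}\leqslant 1-e^{-x}\leqslant x$ for $x\geqslant 0$, applied to $x=1/n$ in the numerator and $x=1/m$ in the denominator (in each direction), which gives
\[
\log\frac{m}{n}-\frac{1}{n}\;\leqslant\;\log\frac{1-e^{-1/n}}{1-e^{-1/m}}\;\leqslant\;\log\frac{m}{n}+\frac{1}{m}.
\]
Multiplying by $d^-$ on the left and $d^+$ on the right, then exponentiating, yields the first asserted inequality $\bigl(m/n\bigr)^{d^-}e^{-d^-/n}\leqslant p(e^{-1/m})/p(e^{-1/n})\leqslant \bigl(m/n\bigr)^{d^+}e^{d^+/m}$.

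The second pair of inequalities follows by exactly the same argument applied to $\tilde p(z)=\exp\{\sum_k \tilde d_k z^k/k\}$, using the bounds $\tilde d^-\leqslant \tilde d_k\leqslant \tilde d^+$ obtained by subtracting $\theta$ throughout; note that $\tilde d^-\geqslant 0$ so the right/left-hand sides remain meaningful (the lower bound degenerating to $1$ when $\tilde d^-=0$, which is consistent with the monotonicity $\tilde p(e^{-1/m})\geqslant \tilde p(e^{-1/n})$). There is no real obstacle — the whole argument is just the sandwich $xe^{-x}\leqslant 1-e^{-x}\leqslant x$ combined with the telescoping identity for $\log(1-\cdot)$; the only point requiring mild care is keeping the direction of the inequalities correct when picking which endpoint $1/m$ or $1/n$ contributes the error term $e^{\pm d^\pm /\cdot}$.
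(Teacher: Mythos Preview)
Your proof is correct and follows essentially the same approach as the paper: write the ratio as $\exp\bigl\{\sum_k \tfrac{d_k}{k}(e^{-k/m}-e^{-k/n})\bigr\}$, bound $d_k$ by $d^\pm$, sum the resulting series to $\log\tfrac{1-e^{-1/n}}{1-e^{-1/m}}$, and apply the sandwich $xe^{-x}\leqslant 1-e^{-x}\leqslant x$. The paper only writes out the upper bound explicitly and says the lower bound and the $\tilde p$ case are obtained ``in the same way''; your version spells out both directions, which is fine.
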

\begin{proof}
We have
\begin{equation*}
\begin{split}
\frac{p( e^{-{1/ {m}}})}{p( e^{-{1/ n} })}&=\exp \left\{
\sum_{k=1}^{\infty}{\frac{d_k}{k}} ( e^{-{k/ {m}}} -e^{-{k/ n} })
\right\} \leqslant \exp \left\{ d^+\sum_{k=1}^{\infty} {\frac{
e^{-{k/ {m}}} -e^{-{k/ n} }}{k}} \right\}
\\
&=\exp \left\{ d^+ \log {\frac{1- e^{-{1/ n} }}{1- e^{-{1/ {m}}}}}
\right\}={\left( {\frac{1- e^{-{1/ n} }}{1- e^{-{1/ {m}}}}}
\right)}^{d^+} \leqslant {\left( {\frac{m}{n}}  \right) }^{d^+}
e^{{{d^+}/ {m}}}.
\end{split}
\end{equation*}
here we have used the inequalities $e^{-x}x\leqslant
1-e^{-x}\leqslant x$ for $x\geqslant 0$.

In the same way we obtain the lower bound estimate.

The proof of the second inequality is analogous.
\end{proof}
The next lemma proves that sequence $p_n$ varies "smoothly" in a certain sense.
\begin{lem}
\label{diffpnm}
If  $0 \leqslant s \leqslant n/2$, then
$$
|p_{n+s}-p_{n}| \ll  p_n \left( \frac{s}{n}\right)^{\theta},
$$
where $\theta =\min\{ d^-,1 \}$.
\end{lem}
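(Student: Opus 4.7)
\emph{Plan.} I use the factorisation $p(z)=\tilde p(z)(1-z)^{-\theta}$. Writing $c_m:=\binom{m+\theta-1}{m}=[z^m](1-z)^{-\theta}$, this gives $p_n=\sum_{j=0}^{n}c_{n-j}\tilde p_j$, with $c_m\geqslant 0$ and $\tilde p_j\geqslant 0$ (the latter since $\tilde d_k=d_k-\theta\geqslant 0$). For $s\geqslant 1$ (the case $s=0$ being trivial) I substitute $m=n-j$ and decompose
\[
p_{n+s}-p_n \;=\; \underbrace{\sum_{m=0}^{n}(c_{m+s}-c_m)\tilde p_{n-m}}_{=:A}\;+\;\underbrace{\sum_{m=0}^{s-1}c_m\tilde p_{n+s-m}}_{=:B}.
\]
The key conversion factor comes from (\ref{p_n2}), (\ref{plwbound}) combined with the identity $\tilde p(e^{-1/n})=p(e^{-1/n})(1-e^{-1/n})^\theta$, which yields $\tilde p(e^{-1/n})/n\ll p_n/n^\theta$ together with the matching lower bound; this equivalence is the bridge that turns every estimate in terms of $\tilde p(e^{-1/n})/n$ into the desired estimate in terms of $p_n/n^\theta$.

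To bound $B$: monotonicity of $\tilde p$ on $[0,1)$ together with Lemma~\ref{pvbound} applied to $\tilde p$ and the upper bound (\ref{p_n2}) for $\tilde p$ give $\tilde p_{n+s-m}\ll \tilde p(e^{-1/n})/n$ uniformly in $0\leqslant m\leqslant s-1$ (since $s\leqslant n/2$ forces $n+s-m\leqslant 3n/2$). A routine coefficient extraction gives $\sum_{m=0}^{s-1}c_m=[z^{s-1}](1-z)^{-\theta-1}=\binom{s-1+\theta}{s-1}\ll s^\theta$. Combined with the conversion factor this yields $B\ll p_n(s/n)^\theta$.

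For $A$ I start from the exact recursion $c_{m+1}/c_m=(m+\theta)/(m+1)$, which gives $c_m\geqslant c_{m+1}$ and $|c_{m+1}-c_m|=(1-\theta)c_m/(m+1)$; telescoping then yields $|c_{m+s}-c_m|\ll c_m\min\!\bigl(1,\,s/(m+1)\bigr)$. I split the sum at $m=s$ and at $m=n/2$. In the range $0\leqslant m< s$ I use $|c_{m+s}-c_m|\leqslant c_m$, the pointwise estimate $\tilde p_{n-m}\ll \tilde p(e^{-1/n})/n$ (valid because $n-m\geqslant n/2$ when $m\leqslant s\leqslant n/2$), and $\sum_{m<s}c_m\ll s^\theta$, obtaining a contribution of order $p_n(s/n)^\theta$. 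In the range $s\leqslant m\leqslant n/2$ the sharper bound $|c_{m+s}-c_m|\ll sc_m/(m+1)$ combined with the convergent tail $\sum_{m\geqslant s}c_m/(m+1)\ll s^{\theta-1}$ (which follows from $c_m\sim m^{\theta-1}/\Gamma(\theta)$) again yields $\ll p_n(s/n)^\theta$. Finally, for $n/2<m\leqslant n$ the uniform bound $c_m/(m+1)\ll n^{\theta-2}$ together with $\sum_{j<n/2}\tilde p_j\ll \tilde p(e^{-1/n})$ from (\ref{upper_bound_for_sum}) gives a contribution of order $s\,p_n/n$, which is majorised by $p_n(s/n)^\theta$ since $s\leqslant n$ and $\theta\leqslant 1$. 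This last sub-range is the tightest point of the argument: it only yields a bound of order $s/n$, and it is essential that the target carries the power $(s/n)^\theta$ with $\theta\leqslant 1$ so that this coarser contribution is still admissible.
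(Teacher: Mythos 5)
Your proof is correct and follows essentially the same route as the paper: the same factorisation $p(z)=\tilde p(z)(1-z)^{-\theta}$, the same splitting of the convolution into the "shifted binomial difference" part and the boundary part, and the same three-range treatment of the difference term, with the conversion $\tilde p(e^{-1/n})=p(e^{-1/n})(1-e^{-1/n})^\theta$ supplying the factor $n^{-\theta}$. The only cosmetic difference is that you control $|c_{m+s}-c_m|$ via the exact recursion $c_{m+1}/c_m=(m+\theta)/(m+1)$ rather than the asymptotic $\binom{m+\theta-1}{m}\sim m^{\theta-1}/\Gamma(\theta)$, which is a slightly more elementary way of obtaining the same estimates.
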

\begin{proof}
Generating function $p(z)$ can be represented as a product
$p(z)={\frac{\tilde p(z)}{(1-z)^{\theta}}}$. This allow us
 to express the coefficients of $p(z)$ as a convolution of
 coefficients of the corresponding generating functions
$$
p_n=\sum_{k=0}^n\tilde p_k {\binom{n-k+\theta -1}{n-k}}.
$$
The idea behind the proof is to exploit  the fact that binomial
coefficients $\binom{n-k+\theta -1}{n-k}$ occurring in the above
expression  vary smoothly as follows from the classical estimate
\begin{equation}
\label{binomial_assympt}
 \binom{n+\theta -1}{n}={\frac{n^{\theta
-1}}{\Gamma (\theta)}}\left( 1+O\left( {\frac1n}\right) \right)
\end{equation}
(see e. g. \cite{flajolet_odlyzko_1990}). We can
represent $p_{n+s}-p_n$ as a difference of convolutions
\begin{eqnarray*}
p_{n+s}-p_n&=&\sum_{k=0}^n\tilde p_k \left( {\binom{n+s-k+\theta
-1}{n+s-k}} -{\binom{n-k+\theta -1}{n-k}} \right)
\\
&&\mbox{} +\sum_{n+s\geqslant k >n}\tilde p_k{\binom{n+s-k+\theta
-1}{n+s-k}}=:S_1+S_2.
\end{eqnarray*}
If $s=0$, then the estimate of the theorem is trivial, therefore we
assume that $s>0$. Applying here the upper bound (\ref{p_n2}) for
$p_j$ together with estimates for ratio \(\frac{\tilde
p(e^{-1/(n+s)})}{\tilde p(e^{-1/n})}\) provided by  Lemma
\ref{pvbound} we obtain
\begin{equation*}
\begin{split}
S_2&\leqslant \max_{n+s\geqslant k>n} \tilde p_k \sum_{l=0}^s
{\binom{l+\theta -1}{l}}   \leqslant  \tilde p(e^{-1/n})ed^+
 {\frac{\tilde p(e^{-1/(n+s)})}{\tilde
p(e^{-1/n})}} \frac{1}{n} \binom{s+\theta }{s}
\\
&\ll s^\theta \frac{\tilde p(e^{-1/n})}n=s^\theta {\frac{
p(e^{-1/n})(1-e^{-1/n})^\theta }n}\leqslant \left( \frac{s}{n}\right)^\theta  \frac{p(e^{-1/n})}{n}.
\end{split}
\end{equation*}
here while dealing with the binomial coefficient $\binom{s+\theta
}{s}$ we used (\ref{binomial_assympt}).

If $\theta =1$, then $S_1=0$, therefore estimating $S_1$ we may
assume that $\theta <1$. First we split the sum $S_1$ into two parts
and and notice that  the binomial coefficients
$\binom{l+\theta-1}{l}$ for $0<\theta<1$ are monotonously decreasing
when $l$ is increasing, which implies that $0<\binom{n-k+\theta -1}{n-k}
-\binom{n+s-k+\theta -1}{n+s-k} \leqslant {\binom{n-k+\theta
-1}{n-k}}$. Hence splitting sum $S_1$ into two parts we obtain
\[
\begin{split}
|S_1|&\leqslant \sum_{k=0}^{n-s}\tilde p_k{\binom{n-k+\theta
-1}{n-k}} \left|{\frac{\binom{n+s-k+\theta
-1}{n+s-k}}{\binom{n-k+\theta -1}{n-k}}}-1 \right|
\\
&\quad+ \sum_{n-s<k\leqslant n}\tilde p_k\binom{n-k+\theta -1}{n-k}
\end{split}
\]
 Once again applying Lemma \ref{pvbound}, the upper bound
 (\ref{p_n2}) for $\tilde{p}_j$ and the
 asymptotic (\ref{binomial_assympt}) for binomial coefficients, we
 obtain
\[
\begin{split}
S_1 &\ll \sum_{k=0}^{n-s}\tilde p_k {\binom{n-k+\theta -1}{n-k}}
{\frac{s}{n-k}} + \frac{\tilde p(e^{-1/n})}{n}\sum_{0\leqslant
 j<s}\binom{j+\theta -1}{j}
\\
&\ll  {\frac{s}{n}}n^{\theta -1}\sum_{k\leqslant n/2}\tilde p_k
+\sum_{n/2<k\leqslant n-s}\tilde p_k s(n-k)^{\theta-2} +
\frac{p(e^{-1/n})}{n}\left( \frac{s}{n} \right)^\theta
\\
&\ll   \frac{s}{n}n^{\theta-1}\tilde p(e^{-1/n})+
 {\frac{\tilde p(e^{-1/n})}{n}}s\sum_{l\geqslant s}l^{\theta -2}
+\frac{p(e^{-1/n})}{n}\left( \frac{s}{n} \right)^\theta
\\
&\ll  \frac{p(e^{-1/n})}{n}\frac{s}{n}+ \frac{p(e^{-1/n})}{n}\left(
\frac{s}{n} \right)^\theta \ll \frac{p(e^{-1/n})}{n}\left( \frac{s}{n} \right)^\theta.
\end{split}
\]
The lemma is proved.
\end{proof}
For $0\leqslant x \leqslant 1$ we denote
$$
G_x(z)=\frac{p(z)}{p(zx)}=\sum_{k=0}^\infty g_{k,x}z^k \quad
\hbox{and} \quad \tilde G_x(z)=\frac{\tilde p(z)}{\tilde
p(zx)}=\sum_{k=0}^\infty \tilde g_{k,x}z^k,
$$
and
$$
C_x(z)= \left( \frac{1-zx}{1-z}
\right)^\theta=\sum_{k=0}^\infty c_{k,x}z^k.
$$
Since $\tilde p(z)={{p(z)}{(1-z)^{\theta}}}$, we have
$$
G_x(z)=\tilde G_x(z) \left( \frac{1-zx}{1-z} \right)^\theta .
$$
Differentiating functions $C_x(z)$ and $G_x(z)$ with respect to $z$ we conclude that
they satisfy differential equations
$$
zC_x'(z)=C_x(z)\theta \sum_{k=1}^\infty z^k(1-x^k)\quad \hbox{and}
\quad zG_x'(z)=G_x(z)\sum_{k=1}^\infty d_k z^k(1-x^k).
$$
which are equivalent to the recurrent relationships
\begin{equation}
\label{recurrences_for_c_n_and_g_n}
c_{n,x}=\frac{\theta }n \sum_{k=1}^n c_{n-k,x}(1-x^k)\quad
\hbox{and} \quad g_{n,x}=\frac1n\sum_{k=1}^n
g_{n-k,x}d_k(1-x^k),
\end{equation}
satisfied by the coefficients $c_{n,x}$, $g_{n,x}$ in the  Taylor expansions of the corresponding functions,
for  all $n\geqslant 1$ with initial conditions $c_{0,x}=g_{0,x}=1$. Since the coefficients of
these linear recurrences together with the  initial conditions are non-negative, thus it follows
by induction that the solutions $c_{n,x},g_{n,x}$ of the above recurrences are also non-negative
$c_{n,x},g_{n,x}\geqslant 0$. Therefore we can apply our inequality (\ref{upper_bound_for_sum})
for the partial sums of coefficients of generating functions with non-negative coefficients
to obtain the upper bounds
$$
\sum_{m=0}^nc_{m,x}\leqslant e C_x(e^{-1/n}) \quad \hbox{and}\quad
\sum_{m=0}^ng_{m,x}\leqslant e G_x(e^{-1/n}).
$$
Applying these upper bounds for partial sums of  $c_{n,x},g_{n,x}$
to bound the right hand side of the recurrences
(\ref{recurrences_for_c_n_and_g_n}) satisfied by these coefficients
we obtain the inequalities
\begin{equation}
\label{c_n_x_inequality}
 c_{n,x}\leqslant
\frac{e\theta C_x(e^{-1/n}) }{n} \quad \hbox{and}\quad
g_{n,x}\leqslant \frac{ed^+ G_x(e^{-1/n}) }{n}.
\end{equation}
The same considerations applied to generating function
$\tilde{G}_x(z)$ lead to inequality
\begin{equation}
\label{g_tilde_n,x_inequality} \tilde{g}_{n,x}\leqslant
\frac{e(d^+-\theta) \tilde{G}_x(e^{-1/n}) }{n}.
\end{equation}
\begin{lem}
\label{diffcmn}
Suppose $0<x<1$ and $s\leqslant
m/2$, then we have
$$
|c_{m,x}-c_{m-s,x}|\ll sm^{\theta-2}(1-x)^\theta +\frac{s}{m^2},
$$
for $m\geqslant 1$.
\end{lem}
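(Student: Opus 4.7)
The plan is to reduce the inequality to a bound on individual Taylor coefficients of a fixed auxiliary function, then establish that bound by singularity analysis made uniform in $x$.

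First, observe the identity
\[
\eta_n:=c_{m,x}-c_{m-1,x}\Big|_{m:=n}=c_{n,x}-c_{n-1,x}=[z^n](1-z)C_x(z)=[z^n](1-xz)^\theta(1-z)^{1-\theta},\qquad n\geq 1,
\]
which yields the telescoping representation
\[
c_{m,x}-c_{m-s,x}=\sum_{n=m-s+1}^{m}\eta_n.
\]
Since $s\leq m/2$, every index $n$ in this sum satisfies $n\geq m/2+1$, so $n\asymp m$ throughout. The lemma therefore reduces to proving the pointwise estimate $|\eta_n|\ll n^{\theta-2}(1-x)^\theta+n^{-2}$ uniformly in $x\in(0,1)$ and $n\geq 1$, with constant depending only on $\theta$.

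To prove this pointwise bound I split
\[
(1-xz)^\theta(1-z)^{1-\theta}=(1-x)^\theta(1-z)^{1-\theta}+R(z),
\]
using the identity $(1-xz)^\theta-(1-x)^\theta=\theta x(1-z)\int_0^1(1-x+x(1-z)v)^{\theta-1}\,dv$ (obtained from the fundamental theorem of calculus applied to $u\mapsto (1-xu)^\theta$ on the segment from $z$ to $1$, after the substitution $u=1-(1-z)v$). This gives
\[
R(z)=\theta x(1-z)^{2-\theta}\int_0^1(1-x+x(1-z)v)^{\theta-1}\,dv.
\]
The first summand contributes $(1-x)^\theta\binom{n+\theta-2}{n}$ to $\eta_n$, bounded by $O(n^{\theta-2}(1-x)^\theta)$ via the binomial asymptotic~(\ref{binomial_assympt}) (with $\theta$ replaced by $\theta-1$; the case $\theta=1$ is trivial as the binomial vanishes). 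It remains to show $\delta_n:=[z^n]R(z)\ll n^{-2}$, which I do by Hankel contour integration around $z=1$ at distance $\asymp 1/n$: on such a contour $(1-z)^{2-\theta}$ has magnitude of order $n^{\theta-2}$; when $n(1-x)\geq 1$ the integrand $(1-x+x(1-z)v)^{\theta-1}$ stays within a bounded factor of $(1-x)^{\theta-1}$, so Flajolet--Odlyzko's transfer theorem gives $|\delta_n|\ll(1-x)^{\theta-1}n^{\theta-3}\leq n^{-2}$ (using $(1-x)^{\theta-1}\leq n^{1-\theta}$ in this regime); when $n(1-x)<1$ the parametrisation $z=1+t/n$ shows that the $v$-integral has modulus $\ll n^{1-\theta}$ on the relevant arc, so $|R(z)|\ll n^{-1}$ there and integration over a contour of length $\asymp 1/n$ produces $|\delta_n|\ll n^{-2}$ as well.

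Summing $|\eta_n|\ll n^{\theta-2}(1-x)^\theta+n^{-2}$ over $n\in[m-s+1,m]$ with $n\asymp m$ then yields the claimed bound $sm^{\theta-2}(1-x)^\theta+s/m^2$. The main obstacle is the uniform bound $|\delta_n|\ll n^{-2}$ in the crossover regime $n(1-x)\asymp 1$, where the leading Flajolet--Odlyzko constant $(1-x)^{\theta-1}$ blows up as $x\to 1$ and the standard transfer theorem cannot be applied off the shelf; one must track the joint dependence on $(1-x)n$ explicitly through the Hankel integral to merge the two regimes.
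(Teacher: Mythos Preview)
Your approach is correct and shares the paper's overall strategy: reduce to the single-step difference $\eta_n=c_{n,x}-c_{n-1,x}=[z^n](1-xz)^\theta(1-z)^{1-\theta}$, bound it by $n^{\theta-2}(1-x)^\theta+n^{-2}$ via a Hankel contour around $z=1$, and telescope.  The difference lies in how that contour bound is obtained.

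The paper does not split the integrand algebraically.  It simply bounds, on the contour,
\[
|1-xz|^{\theta}|1-z|^{1-\theta}\leqslant\bigl(|1-z|+|z|(1-x)\bigr)^{\theta}|1-z|^{1-\theta}
\ll |1-z|+|1-z|^{1-\theta}(1-x)^{\theta},
\]
using subadditivity of $u\mapsto u^{\theta}$ for $0<\theta\leqslant 1$.  Integrating these two terms over the standard four-piece contour (large circle of radius $2$, small circle of radius $1/m$ around $1$, two rays at angle $\pi/6$) immediately produces $m^{-2}$ and $m^{\theta-2}(1-x)^{\theta}$ respectively, uniformly in $x$, with no case distinction on $m(1-x)$.

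Your route---subtracting $(1-x)^\theta(1-z)^{1-\theta}$, then handling the remainder $R(z)$ by a Hankel integral with a regime split $n(1-x)\gtrless 1$---works, but is more laborious.  The ``main obstacle'' you identify (uniformity across the crossover $n(1-x)\asymp 1$, and the fact that the implicit constant in a black-box Flajolet--Odlyzko transfer for $R$ depends on $x$) is genuine for your decomposition; you do have to redo the Hankel integral by hand with explicit dependence on $x$, and your sketch for the regime $n(1-x)<1$ should also treat the rays, not just the arc of length $\asymp 1/n$.  All of this can be carried out, but the paper's pointwise bound on the integrand sidesteps the whole issue in one line.
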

\begin{proof}
We will apply the same standard technique of contour integration
that was used by Flajolet and Odlyzko  \cite{flajolet_odlyzko_1990}
to analyze generating functions with singularities of type
$1/(1-z)^\alpha$. The first step in our proof is to represent
$c_{m,x}$ as a Cauchy integral of function $C_x(z)$ over contour $L$
that consists of four parts $ L=L_1\cup L_2\cup L_3 \cup L_4 $, two
arcs $L_1$, $L_2$ with radiuses $2$, $1/m$ correspondingly and two
segments $L_3$ and $L_4$ connecting the ends of these arcs as shown
bellow on the picture.

\begin{tikzpicture}
  \draw[->] (-3,0)--(3,0);
  \draw[->] (0,-3)--(0,3);
  \draw[->,thick,red] (1.866025404,0.5) arc (15:346:2);
  \draw[<-,thick,red] (1.259807621,0.15) arc (30:330:0.3);
  \draw[->,thick,red] (1.259807621,0.15)--(1.866025404,0.5);
  \draw[<-,thick,red] (1.259807621,-0.15)--(1.866025404,-0.5);
  \draw (1,0.05) -- (1,-0.05) node[anchor=north] {$1$};
  \draw (2,0.05) -- (2,-0.05) node[anchor=north] {$2$};
  \draw (-0.05,1) -- (0.05,1);
  \draw (-0.05,-1) -- (0.05,-1);
  \draw (-1,0.05) -- (-1,-0.05);
  \draw[dashed,] (1,0)--(2.94850, 1.125);
  \draw[<->] (2.4,0) arc (0:30:1.4);
  \draw[dashed] (0.7,1) -- (0.7,-1);
  \draw[dashed] (1,1) -- (1,-1);
  \draw[,<->] (0.7,-0.75) -- (1,-0.75);
  \node[anchor=north] at (0.875,-0.75) {$\frac{1}{m}$};
  \node[anchor=west] at (2.4,0.3){$\frac\pi6$};
  \node at (-1,2){$L_1$};
  \node at (0.7,0.4){$L_2$};
  \node at (1.4,0.6){$L_3$};
  \node at (1.4,-0.6){$L_4$};
\end{tikzpicture}

Thus we can replace the difference of coefficients $c_{m,x}$ and $c_{m-1,x}$ by
the difference of the corresponding Cauchy integrals
\begin{equation*}
\begin{split}
|c_{m,x}-c_{m-1,x}|&= \left| \frac{1}{2\pi i}\int_{L}
C_x(z) {\frac{(1-z)}{z^{m+1}}}\,dz \right| \leqslant
 {\frac1{2\pi}}\int_{L}
{\frac{|1-xz|^\theta|1-z|^{1-\theta}}{|z|^{m+1}}}\,|dz|
\\
&\leqslant
 \frac{1}{2\pi}\int_{L}
{\frac{(|1-z|+|z||1-x|)^\theta|1-z|^{1-\theta}}{|z|^{m+1}}}\,|dz|
\\
&\ll
 \int_{L}
{\frac{|1-z|+|1-z|^{1-\theta}|1-x|^\theta}{|z|^{m+1}}}\,|dz|.
\end{split}
\end{equation*}
Let us now estimate the above integral over the four separate parts of our contour
\begin{equation*}
\begin{split}
|c_{m,x}-c_{m-1,x}|&\ll \frac{1}{2^m}
+\int_{1+{\frac1m}}^2 {\frac{(y-1)+(y-1)^{1-\theta}|1-x|^\theta}{y^{m+1}}}\,dy
\\
&\quad+\int_{|z-1|={\frac1m}} {\frac{|1-z|+|1-z|^{1-\theta}|1-x|^\theta}{|z|^{m+1}}}\,|dz|
\\
&\ll\frac{(1-x)^\theta}{m}\int_{m\log \left( 1+{\frac1m}\right)
}^{m\log 2} \frac{(e^{u/m}-1)^{1-\theta}}{e^u} \,du
+\frac{1}{m^2}+m^{\theta -2}(1-x)^\theta
\\
&\ll(1-x)^\theta m^{\theta -2}\int_{m\log \left( 1+{\frac1m}\right)
}^{m\log 2} {u^{\theta}}{e^{-u}} \,du +\frac{1}{m^2}+m^{\theta
-2}(1-x)^\theta
\\
&\ll
 m^{\theta-2}(1-x)^\theta +\frac1{m^2}.
 \end{split}
\end{equation*}
From the above estimate of the difference $|c_{m,x}-c_{m-1,x}|$ by the standard use of telescoping sums
we obtain the estimate
\begin{equation*}
\begin{split}
|c_{m,x}-c_{m-s,x}|&\leqslant |c_{m,x}-c_{m-1,x}|+|c_{m-1,x}-c_{m-2,x}|+\cdots
+|c_{m-s+1,x}-c_{m-s,x}|
\\
&\ll sm^{\theta-2}(1-x)^\theta +{\frac{s}{m^2}},
\end{split}
\end{equation*}
for $s\leqslant m/2$.

The lemma is proved.
\end{proof}

\begin{lem}
\label{diffgnm}
For $0\leqslant x \leqslant e^{-1/n}$ and
$k\leqslant n/8$, we have
$$
g_{n,x} -g_{n-k,x} \ll {\frac{p(e^{-1/n})}{np(x)}}\left( \left(
{\frac{k}{n}} \right)^\theta +{\frac1{(n(1-x))^\theta}} \right).
$$
\end{lem}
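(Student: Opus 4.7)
The plan is to mimic the proof of Lemma \ref{diffpnm}: factor $G_x(z)=\tilde G_x(z)\,C_x(z)$ (which holds because $\tilde G_x(z)=\tilde p(z)/\tilde p(xz)$ and $C_x(z)=\bigl((1-xz)/(1-z)\bigr)^{\theta}$), pass to Taylor coefficients to get $g_{n,x}=\sum_{j=0}^{n}\tilde g_{j,x}c_{n-j,x}$, and subtract the corresponding identity at $n-k$ to obtain
\[
g_{n,x}-g_{n-k,x}=\sum_{j=0}^{n-k}\tilde g_{j,x}\bigl(c_{n-j,x}-c_{n-k-j,x}\bigr)+\sum_{j=n-k+1}^{n}\tilde g_{j,x}c_{n-j,x}=:T_{1}+T_{2}.
\]
Throughout the argument I would rely on two consequences of $\tilde d_{k}\ge 0$: the coefficients $\tilde g_{j,x}$ and $c_{l,x}$ are non-negative, and the functions $m\mapsto\tilde G_{x}(e^{-1/m})$, $m\mapsto C_{x}(e^{-1/m})$ are increasing. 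Combined with (\ref{g_tilde_n,x_inequality}) this gives the pointwise bound $\tilde g_{j,x}\ll\tilde G_{x}(e^{-1/n})/n$ for every $j\ge n/2$, which is the workhorse for all the boundary estimates.

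For $T_{2}$ the hypothesis $k\le n/8$ puts $j\ge 7n/8$, so the above pointwise bound applies; coupled with $\sum_{l=0}^{k-1}c_{l,x}\le eC_{x}(e^{-1/k})$ from (\ref{upper_bound_for_sum}) this gives $T_{2}\ll\tilde G_{x}(e^{-1/n})C_{x}(e^{-1/k})/n$. The sliver $n-2k<j\le n-k$ of $T_{1}$, for which Lemma \ref{diffcmn} is inapplicable (since then $k>(n-j)/2$), is treated identically by bounding $|c_{n-j,x}-c_{n-k-j,x}|$ by $c_{n-j,x}+c_{n-k-j,x}$. On the main range $j\le n-2k$ Lemma \ref{diffcmn} supplies
\[
|c_{n-j,x}-c_{n-k-j,x}|\ll k(n-j)^{\theta-2}(1-x)^{\theta}+k(n-j)^{-2},
\]
and I would split once more at $j=n/2$: for $j\le n/2$ the weights $(n-j)^{\theta-2}$ and $(n-j)^{-2}$ are of order $n^{\theta-2}$ and $n^{-2}$, pull out of the sum, and multiply $\sum_{j\le n/2}\tilde g_{j,x}\le e\tilde G_{x}(e^{-1/n})$; for $n/2<j\le n-2k$ I would apply the pointwise bound on $\tilde g_{j,x}$ and evaluate the tails $\sum_{M\ge 2k}M^{\theta-2}\asymp k^{\theta-1}$ and $\sum_{M\ge 2k}M^{-2}\asymp k^{-1}$. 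Together these produce a dominant contribution of order $k^{\theta}(1-x)^{\theta}\tilde G_{x}(e^{-1/n})/n$ plus lower-order pieces bounded by $\tilde G_{x}(e^{-1/n})/n$ and $\tilde G_{x}(e^{-1/n})C_{x}(e^{-1/k})/n$.

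To convert these bounds to the form stated in the lemma I would use $\tilde G_{x}=G_{x}/C_{x}$ together with the elementary estimate
\[
C_{x}(e^{-1/n})=\Bigl(\tfrac{1-xe^{-1/n}}{1-e^{-1/n}}\Bigr)^{\!\theta}\asymp\bigl(n(1-x)\bigr)^{\theta},
\]
valid because $x\le e^{-1/n}$ forces $1-x\gg 1/n$; this turns the main term into $G_{x}(e^{-1/n})(k/n)^{\theta}/n$. A case split on whether $1-x\ge 1/k$ or $1-x<1/k$ shows that the $C_{x}(e^{-1/k})$-boundary contribution is either absorbed into $(k/n)^{\theta}$ (when $1-x\ge 1/k$, where $C_{x}(e^{-1/k})\asymp (k(1-x))^{\theta}$ and so $C_{x}(e^{-1/k})/C_{x}(e^{-1/n})\asymp (k/n)^{\theta}$) or supplies precisely the missing factor $1/(n(1-x))^{\theta}$ (when $1-x<1/k$, where $C_{x}(e^{-1/k})\asymp 1$ and $C_{x}(e^{-1/k})/C_{x}(e^{-1/n})\asymp 1/(n(1-x))^{\theta}$). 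Finally $G_{x}(e^{-1/n})=p(e^{-1/n})/p(xe^{-1/n})\asymp p(e^{-1/n})/p(x)$, because $1-xe^{-1/n}\asymp 1-x$ together with an argument of the type used in Lemma \ref{pvbound} gives $p(xe^{-1/n})\asymp p(x)$, producing the stated inequality. The main obstacle I expect is exactly this last bookkeeping: arranging the boundary contributions so that they land inside the two templates $(k/n)^{\theta}$ and $(n(1-x))^{-\theta}$ without overshooting, under both regimes of the dichotomy $1-x\gtrless 1/k$; all remaining steps are routine and mirror the arguments already carried out in Lemmas \ref{diffpnm} and \ref{diffcmn}.
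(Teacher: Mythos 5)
Your argument follows the paper's proof of Lemma \ref{diffgnm} essentially step for step: the same factorization $G_x=\tilde G_x C_x$, the same decomposition of the difference of convolutions (your ``sliver'' $n-2k<j\leqslant n-k$ together with $T_2$ is exactly the paper's $S_2+S_3$), the same application of Lemma \ref{diffcmn} on the range $j\leqslant n-2k$ with a further split at $j=n/2$, and an equivalent final bookkeeping --- your dichotomy $1-x\gtrless 1/k$ plays the role of the paper's inequality $\bigl(1+\frac{1}{k(1-x)}\bigr)^{\theta}\ll 1+\frac{1}{(k(1-x))^{\theta}}$, to the same effect. The conversions $C_x(e^{-1/n})\asymp\bigl(n(1-x)\bigr)^{\theta}$ and $p(xe^{-1/n})\asymp p(x)$ for $x\leqslant e^{-1/n}$ are both correct and are used (the second one tacitly) in the paper as well.

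The one step that does not survive scrutiny is the tail evaluation $\sum_{M\geqslant 2k}M^{\theta-2}\asymp k^{\theta-1}$ on the range $n/2<j\leqslant n-2k$. For $\theta=1$ (i.e.\ $d^-\geqslant 1$) this sum is of order $\log(n/k)$ rather than $O(1)$, so the resulting contribution would be $\frac{G_x(e^{-1/n})}{n}\cdot\frac{k}{n}\log\frac{n}{k}$, overshooting the claimed $(k/n)^{\theta}$ by a logarithm. The paper disposes of this case at the outset by observing that for $\theta=1$ one has $c_{j,x}=1-x$ for every $j\geqslant 1$, so every difference $c_{n-j,x}-c_{n-k-j,x}$ with $n-k-j\geqslant 1$ vanishes and the whole main-range sum is identically zero; only the boundary terms, which you handle by the crude bound, survive. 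With that one-line observation inserted your proof is complete and coincides with the paper's.
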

\begin{proof}
Since $G_x(z)=\tilde G_x(z) C_x(z)$ this allows us to express the
difference $g_{n,x}$ as a convolution of coefficients of $\tilde
G_x(z)$ and $C_x(z)$.Thus we can to express the difference of
coefficients $g_{n,x}$ as
\begin{equation*}
\begin{split}
g_{n,x}-g_{n-k,x}&=
\sum_{s=0}^n \tilde g_{s,x}c_{n-s,x}
-\sum_{s=0}^{n-k} \tilde g_{s,x}c_{n-k-s,x}
=\sum_{s=0}^{n-2k}\tilde g_{s,x}(c_{n-s,x}-c_{n-k-s,x})
\\
&\quad+\sum_{n-2k<s \leqslant n} \tilde g_{s,x}c_{n-s,x}
-\sum_{n-2k<s\leqslant n-k} \tilde g_{s,x}
c_{n-k-s,x}=:S_1+S_2+S_3.
\end{split}
\end{equation*}
If $\theta=1$ then all $c_{j,x}$ with $j\geqslant 1$ are equal
$c_{j,x}=1-x$. Which means that $S_1=0$ if $\theta=1$. Therefore,
while estimating $S_1$, we may assume that $\theta<1$.
 Applying the estimate of the difference $c_{j+s,x}-c_{j,x}$ of Lemma \ref{diffcmn}
 together with the inequality
 (\ref{upper_bound_for_sum}) for partial sums of coefficients of
generating function with positive coefficients and making use of the
upper bound (\ref{g_tilde_n,x_inequality}) for $\tilde{g}_{j,x}$
  we obtain
\[
\begin{split}
S_1&\ll \sum_{0\leqslant s \leqslant n-2k} \tilde g_{s,x}\left(
 k(n-s)^{\theta-2}(1-x)^\theta +\frac{k}{(n-s)^2}
     \right)
\\
&\ll  kn^{\theta-2}(1-x)^\theta \sum_{0\leqslant s \leqslant n/2}
\tilde g_{s,x}
\\
&\quad+\sum_{n/2 \leqslant s \leqslant n-2k} \tilde g_{s,x}\left(
 k(n-s)^{\theta-2}(1-x)^\theta +\frac{k}{(n-s)^2}
     \right)
\\
&\ll  kn^{\theta-2}(1-x)^\theta \tilde G_x(e^{-1/n})
\\
&\quad+k{\frac{\tilde G_x(e^{-1/n})}n} \sum_{n/2 \leqslant s
\leqslant n-2k} \left(
 (n-s)^{\theta-2}(1-x)^\theta +\frac{1}{(n-s)^2}
     \right)
\\
&\ll  kn^{\theta-2}(1-x)^\theta  G_x(e^{-1/n}) \left(
\frac{1-e^{-1/n}}{1-xe^{-1/n}} \right)^\theta
\\
&\quad+k \frac{G_x(e^{-1/n})}n \left( {\frac{1-e^{-1/n}}{1-xe^{-1/n}}} \right)^\theta \left(
 k^{\theta-1}(1-x)^\theta +{\frac1k}
     \right).
\end{split}
\]
Since   $1-xe^{-1/n}\geqslant 1-x$, we have
$$
S_1\ll {\frac{k}{n^2}}{\frac{p(e^{-1/n})}{p(xe^{-1/n})}} +{\frac1n}{\frac{p(e^{-1/n})}{p(xe^{-1/n})}}
\left( \left( \frac{k}{n}
\right)^\theta +\frac{1}{(n(1-x))^\theta} \right).
$$
In a similar way we obtain
\begin{equation*}
\begin{split}
S_2+S_3&\ll {\frac1n} \tilde G_x(e^{-1/n})\sum_{0\leqslant l \leqslant 2k}c_{x,l}
\\
&\ll {\frac1n}{\frac{p(e^{-1/n})}{p(xe^{-1/n})}}
 \left(
{\frac{1-e^{-1/n}}{1-xe^{-1/n}}}
\right)^\theta    \left(
{\frac{1-xe^{-1/2k}}{1-e^{-1/2k}}}
\right)^\theta
\\
&\ll {\frac1n}{\frac{p(e^{-1/n})}{p(xe^{-1/n})}}
 \left(
{\frac{1-x +x(1-e^{-1/2k})}{1-x}}
\right)^\theta   \left( {\frac{k}{n}} \right)^\theta
\\
&\ll {\frac1n} {\frac{p(e^{-1/n})}{p(xe^{-1/n})}}
 \left(
1+{\frac1{k(1-x)}}
\right)^\theta  \left( {\frac{k}{n}} \right)^\theta
\\
&\ll {\frac1n} {\frac{p(e^{-1/n})}{p(xe^{-1/n})}}
\left( \left( \frac{k}{n}
\right)^\theta +{\frac1{(n(1-x))^\theta}} \right).
\end{split}
\end{equation*}
Since $p(xe^{-1/n})\gg p(x)$ if $0\leqslant x \leqslant e^{-1/n}$,
 the proof of the lemma follows.
\end{proof}

\begin{lem}
\label{intbounds}
Suppose $u(x)=\exp \left\{
\sum_{k=1}^{\infty}\frac{u_k}{k}x^k
  \right\}$ and $0\leqslant u_k \leqslant A$, then the following estimates hold:
\begin{enumerate}
\item[1)]
$$
\quad\int_0^1{\frac{x^{j-1}}{u(x)}}\,dx \ll {\frac1{ju(e^{-1/j})}}
\quad \hbox{if} \quad j\geqslant 1;
$$
\item[2)]
$$
\quad\int_0^{e^{-1/n}}{\frac{x^{j-1}}{u(x)}}\,dx \ll
\frac{e^{-j/n}}{ju(e^{-1/n})} \quad \hbox{if} \quad j\geqslant n\geqslant 1.
$$
\end{enumerate}
\end{lem}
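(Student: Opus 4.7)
The plan is to reduce both bounds to a single monotonicity–type estimate on ratios of $u$, and then separate analyses of the Beta-type integral (for part 1) and an incomplete-Gamma estimate (for part 2).

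The key preliminary observation is that the bound $0 \le u_k \le A$ gives, for $0 \le x \le y < 1$,
\[
\log\frac{u(y)}{u(x)} = \sum_{k=1}^\infty \frac{u_k}{k}\bigl(y^k - x^k\bigr) \le A\sum_{k=1}^\infty \frac{y^k - x^k}{k} = A\log\frac{1-x}{1-y},
\]
so $u(y)/u(x) \le \bigl((1-x)/(1-y)\bigr)^A$. In particular the reciprocal bound $1/u(x) \le (1-x)^A(1-y)^{-A}/u(y)$ will let me pay for the smallness of $u$ near $0$ against the factor $(1-x)^A$ inside the integral.

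For part~1, I would split $\int_0^1 = \int_0^{e^{-1/j}} + \int_{e^{-1/j}}^1$. On the upper piece, $u$ is increasing so $1/u(x) \le 1/u(e^{-1/j})$, and then $\int_{e^{-1/j}}^1 x^{j-1}\,dx \le 1/j$ gives the desired bound directly. On the lower piece, applying the ratio bound with $y = e^{-1/j}$ yields
\[
\int_0^{e^{-1/j}} \frac{x^{j-1}}{u(x)}\,dx \le \frac{(1-e^{-1/j})^{-A}}{u(e^{-1/j})}\int_0^1 x^{j-1}(1-x)^A\,dx = \frac{(1-e^{-1/j})^{-A}}{u(e^{-1/j})}\,B(j,A+1),
\]
and using $1-e^{-1/j}\asymp 1/j$ together with $B(j,A+1) = \Gamma(j)\Gamma(A+1)/\Gamma(j+A+1) \ll j^{-A-1}$ produces precisely the claimed $\ll 1/(j\,u(e^{-1/j}))$.

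For part~2, I apply the same ratio bound with $y = e^{-1/n}$ and then change variables $x = e^{-t}$, so that the remaining integral becomes $\int_{1/n}^\infty e^{-jt}(1-e^{-t})^A\,dt \le \int_{1/n}^\infty e^{-jt}t^A\,dt$ after using $1 - e^{-t} \le t$. Substituting $s = jt$ turns this into $j^{-A-1}\Gamma(A+1,j/n)$. Because $j \ge n$ the argument $j/n \ge 1$, and the standard tail bound for the upper incomplete Gamma function gives $\Gamma(A+1,j/n) \ll (j/n)^A e^{-j/n}$; combined with the prefactor $(1-e^{-1/n})^{-A}/u(e^{-1/n}) \asymp n^A/u(e^{-1/n})$, the powers of $n$ cancel and the stated bound $\ll e^{-j/n}/(j\,u(e^{-1/n}))$ drops out.

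The only genuinely delicate point is ensuring the incomplete-Gamma estimate really has the clean form $\Gamma(A+1,x) \ll x^A e^{-x}$ in the range $x \ge 1$; this follows from one integration by parts (or iterating $A$ times when $A \ge 1$), and the hypothesis $j\ge n$ is precisely what is needed to stay in that range. Everything else is essentially the monotonicity/ratio estimate on $u$ plus standard Beta/Gamma asymptotics.
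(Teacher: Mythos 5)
Your proof is correct and takes essentially the same route as the paper: the same ratio bound $u(y)/u(x)\leqslant\bigl((1-x)/(1-y)\bigr)^{A}$ derived from $0\leqslant u_k\leqslant A$, the same split of part 1 at $x=e^{-1/j}$, and the same reduction of part 2 to the incomplete Gamma tail $\int_{j/n}^{\infty}y^{A}e^{-y}\,dy\ll (j/n)^{A}e^{-j/n}$. The only cosmetic difference is that you quote the Beta function identity $B(j,A+1)\ll j^{-A-1}$ where the paper substitutes $x=e^{-y}$ and bounds $(1-e^{-y})^{A}\leqslant y^{A}$.
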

\begin{proof} 1) For $j\geqslant 1$ we can split the integral into
 two parts and taking into account that $u(x)$ is increasing obtain
\begin{equation*}
\begin{split}
\int_0^1{\frac{x^{j-1}}{u(x)}}\,dx
&=\int_0^{e^{-1/j}}{\frac{x^{j-1}}{u(x)}}\,dx
+\int_{e^{-1/j}}^1\frac{x^{j-1}}{u(x)}\,dx
\\
&\leqslant {\frac1{u(e^{-1/j})}}\int_0^{e^{-1/j}}{\frac{u(e^{-1/j})}{u(x)}}
x^{j-1}\,dx +{\frac{1-e^{-1/j}}{ju(e^{-1/j})}}
\\
&\leqslant {\frac1{u(e^{-1/j})}}\int_0^{e^{-1/j}} x^{j-1}\exp \left\{
\sum_{k=1}^{\infty}{\frac{u_k}{k}}(e^{-k/j}-x^k)
  \right\}\,dx+ {\frac1{ju(e^{-1/j})}}.
\end{split}
\end{equation*}
Applying now the upper bound $u_k\leqslant A$ to estimate the quantity under the integration sign
we obtain
\[
\begin{split}
\int_0^1{\frac{x^{j-1}}{u(x)}}\,dx&\leqslant
{\frac1{u(e^{-1/j})}}\int_0^{e^{-1/j}} x^{j-1} \left(
{\frac{1-x}{1-e^{-1/j}}} \right)^A   \,dx+ {\frac1{ju(e^{-1/j})}}
\\
&\leqslant {\frac{e^{A/j}j^A}{u(e^{-1/j})}}\int_0^{e^{-1/j}}
x^{j-1}(1-x)^A\,   dx+ {\frac1{ju(e^{-1/j})}}
\\
&= {\frac{e^{A/j}j^A}{u(e^{-1/j})}}\int_{1/j}^{\infty }
(1-e^{-y})^A e^{-jy}\,   dy + {\frac1{ju(e^{-1/j})}}
\\
&\leqslant {\frac{e^{A/j}j^A}{u(e^{-1/j})}}\int_0^{\infty} y^A
e^{-jy}\, dy + {\frac1{ju(e^{-1/j})}} = \frac{e^{A/j} \Gamma
(A+1)+1}{ju(e^{-1/j})},
\end{split}
\]
here we have used the inequalities $e^{-y}y\leqslant
1-e^{-y}\leqslant y$, for $y\geqslant 0$.

2) Suppose now that $j\geqslant n$. Applying the same considerations
that we used to estimate the integral over region $0\leqslant x
\leqslant e^{-1/n}$ in the previous estimate, we obtain
\begin{equation*}
\begin{split}
\int_0^{e^{-1/n}}{\frac{x^{j-1}}{u(x)}}\,dx
&\leqslant {\frac1{u(e^{-1/n})}}\int_0^{e^{-1/n}} x^{j-1} \left( {\frac{1-x}{1-e^{-1/n}}} \right)^A \,  dx
\\
&\leqslant
 {\frac{e^{A/n}n^A}{u(e^{-1/n})}}\int_0^{e^{-1/n}}
x^{j-1}(1-x)^A  \, dx
\\
&={\frac{e^{A/n}n^A}{u(e^{-1/n})}}\int_{1/n}^{\infty }
(1-e^{-y})^Ae^{-jy}  \, dy
\\
&\leqslant {\frac{e^{A/n}n^A}{u(e^{-1/n})}}\int_{1/n}^{\infty } y^Ae^{-jy}  \, dy
\\
&={\frac{e^{A/n}n^A}{u(e^{-1/n})}}{\frac1{j^{A+1}}}\int_{j/n}^{\infty } y^Ae^{-y} \,  dy \ll {\frac{e^{-j/n}}{ju(e^{-1/n})}}.
\end{split}
\end{equation*}
The last inequality follows from the fact that $\int_{T}^{\infty }
y^Ae^{-y} \, dy \ll T^Ae^{-T}$, as $T\to \infty$.

The lemma is proved.
\end{proof}

\begin{lem}
\label{boundofg}
\[
\int_0^{e^{-1/n}} \left| g_{x,n} -{\frac{p_n}{p(x)}}
\right|x^{j-1}\, dx \ll {\frac{p(e^{-1/n})}{n} } \left( {\frac{j^{\theta
-1}}{n^\theta}} \right) {\frac1{p(e^{-1/j})}},
\]
when $1\leqslant j\leqslant n$.
\end{lem}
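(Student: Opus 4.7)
The plan is to derive a pointwise bound on $g_{n,x} - p_n/p(x)$ uniform in $x \in [0, e^{-1/n}]$ and then integrate against $x^{j-1}$. Equating $n$-th Taylor coefficients in the factorization $p(z) = G_x(z)\, p(xz)$ yields the convolution identity $p_n = \sum_{l=0}^{n} g_{n-l,x}\, p_l x^l$, and subtracting this from $g_{n,x}\, p(x) = \sum_{l \geq 0} g_{n,x}\, p_l x^l$ gives
\[
g_{n,x} - \frac{p_n}{p(x)} = \frac{1}{p(x)}\left(\sum_{l=0}^{n}(g_{n,x} - g_{n-l,x})\, p_l x^l + g_{n,x}\sum_{l>n} p_l x^l\right).
\]

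I would split the $l$-sum at $l = n/8$. For $l \leq n/8$ Lemma \ref{diffgnm} applies directly and gives $|g_{n,x} - g_{n-l,x}| \ll \frac{p(e^{-1/n})}{np(x)}\bigl((l/n)^\theta + (n(1-x))^{-\theta}\bigr)$. The crucial non-routine step is the Hölder-type estimate
\[
\sum_l l^\theta p_l x^l \leq \bigl(\sum_l l p_l x^l\bigr)^\theta \bigl(\sum_l p_l x^l\bigr)^{1-\theta} = (x p'(x))^\theta p(x)^{1-\theta} \ll \frac{p(x)}{(1-x)^\theta},
\]
which follows from $p'(x)/p(x) \leq d^+/(1-x)$. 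For the complementary contributions ($l > n/8$ and $l > n$) I would use (\ref{c_n_x_inequality}) together with the elementary comparison $p(xe^{-1/m}) \asymp p(x)$ on $x \in [0, e^{-1/n}]$ (itself a consequence of the same log-derivative bound) to obtain $g_{n,x} \ll p(e^{-1/n})/(np(x))$, then apply the Hölder corollary $\sum_{l > n/8} p_l x^l \leq 8^\theta \sum_l (l/n)^\theta p_l x^l \ll p(x)/(n(1-x))^\theta$ to the "Part 1'' piece $g_{n,x}\sum_{l \in (n/8, n]} p_l x^l$; the "Part 2'' piece $\sum_{l > n/8} g_{n-l,x}\, p_l x^l$ is handled by combining the convolution identity $p_n = \sum_m g_{m,x}\, p_{n-m} x^{n-m}$ with Lemma \ref{diffgnm} on the complementary range $k = n-m \leq n/8$. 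Collecting these yields the pointwise bound
\[
\left| g_{n,x} - \frac{p_n}{p(x)} \right| \ll \frac{p(e^{-1/n})}{n^{1+\theta}\, p(x)(1-x)^\theta} = \frac{p(e^{-1/n})}{n^{1+\theta}\, \tilde p(x)}.
\]

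The integration is then routine: applying Lemma \ref{intbounds} to $u = \tilde p$ (whose log-coefficients $\tilde d_k = d_k - \theta$ satisfy $0 \leq \tilde d_k \leq d^+$), combined with the identity $\tilde p(e^{-1/j}) = p(e^{-1/j})(1 - e^{-1/j})^\theta \asymp p(e^{-1/j})/j^\theta$, produces $\int_0^{e^{-1/n}} x^{j-1}/\tilde p(x)\, dx \ll j^{\theta-1}/p(e^{-1/j})$, and multiplication by $p(e^{-1/n})/n^{1+\theta}$ gives the claim. The main obstacle is the Hölder estimate: without the bound $\sum l^\theta p_l x^l \ll p(x)/(1-x)^\theta$, a crude substitution $\sum(l/n)^\theta p_l x^l \leq p(x)$ would leave a factor $1/p(x)$ after division instead of $1/\tilde p(x)$, yielding a final bound too large by $(n/j)^\theta$ for $j \ll n$. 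The $(1-x)^{-\theta}$ produced by Hölder is exactly what allows Lemma \ref{intbounds} to be applied to $\tilde p$ rather than $p$, supplying the needed $j^\theta$ gain. A secondary subtlety is that Lemma \ref{diffgnm} is unavailable for $l > n/8$, so the control of Part 2 must be engineered indirectly through the convolution identity rather than by a direct estimate on $g_{n-l,x}$.
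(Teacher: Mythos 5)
Your overall strategy is sound and genuinely different in execution from the paper's. You start from the same decomposition (the convolution $p_n=\sum_{l\leqslant n}g_{n-l,x}p_lx^l$ split at $l=n/8$, which is exactly the paper's $S_1+S_2+S_3$), but you compress everything into a single pointwise bound $\bigl|g_{n,x}-p_n/p(x)\bigr|\ll p(e^{-1/n})n^{-1-\theta}\tilde p(x)^{-1}$ and integrate once. The paper instead keeps the $k$-dependence of the Lemma \ref{diffgnm} bound inside $S_1$, integrates term by term via Lemma \ref{intbounds} with $u=p^2$ and $u=p^2(1-x)^\theta$, and only then sums over $k$ (splitting at $k=j$ and invoking Lemma \ref{pvbound}); the pieces $S_2,S_3$ are integrated separately to $O(1/n^2)$ and absorbed. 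Your H\"older step $\sum_l l^\theta p_lx^l\leqslant (xp'(x))^\theta p(x)^{1-\theta}\ll p(x)(1-x)^{-\theta}$ is correct and is precisely what lets you avoid the post-integration summation; the final integration $\int_0^{e^{-1/n}}x^{j-1}\tilde p(x)^{-1}\,dx\ll j^{\theta-1}/p(e^{-1/j})$ checks out, since $\tilde d_k=d_k-\theta\in[0,d^+]$ and $\tilde p(e^{-1/j})\asymp p(e^{-1/j})j^{-\theta}$. This is arguably cleaner than the paper's route.

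The one step that does not work as written is your treatment of Part 2, $\sum_{n/8<l\leqslant n}g_{n-l,x}p_lx^l$. Replacing it by $p_n-\sum_{k\leqslant n/8}g_{n-k,x}p_kx^k$ via the convolution identity and then applying Lemma \ref{diffgnm} to the subtracted sum is circular: writing $g_{n-k,x}=g_{n,x}-(g_{n,x}-g_{n-k,x})$ there reproduces $-(g_{n,x}p(x)-p_n)$ plus the other two pieces of your own decomposition, so the whole relation collapses to a tautology and yields no bound. Fortunately the direct estimate you set aside does work and gives exactly the pointwise bound you need: $\sum_{n/8<l\leqslant n}g_{n-l,x}p_lx^l\leqslant x^{n/8}\bigl(\max_{n/8<l\leqslant n}p_l\bigr)\sum_{m\leqslant n}g_{m,x}\ll x^{n/8}\,n^{-1}p(e^{-1/n})^2/p(xe^{-1/n})$ by (\ref{p_n2}) and (\ref{upper_bound_for_sum}); since $p(e^{-1/n})/p(x)\ll(n(1-x))^{d^+}$ for $0\leqslant x\leqslant e^{-1/n}$ while $x^{n/8}\leqslant e^{-n(1-x)/8}$, the exponential beats the polynomial and one gets $\ll p(e^{-1/n})n^{-1-\theta}p(x)^{-1}(1-x)^{-\theta}$ after dividing by $p(x)$, as required. (The same mechanism, in integrated form, is how the paper disposes of its $S_2$ and $S_3$.) With that substitution your argument is complete.
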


\begin{proof} We can represent function $p(z)$ as a product
$p(z)=p(xz){\frac{p(z)}{p(xz)}}=p(xz)G_x(z)$ which is equivalent to
representation of the coefficient $p_n$ of $p(z)$ as a convolution
$p_n=\sum_{k=0}^np_kx^kg_{n-k,x}$. Applying this identity we obtain
\begin{equation*}
\begin{split}
\left| g_{x,n} -{\frac{p_n}{p(x)}}  \right| &=
 {\frac1{p(x)}}\left|p(x)g_{n,x} - \sum_{k=0}^np_kx^kg_{n-k,x} \right|
\\
&\leqslant {\frac1{p(x)}} \sum _{k\leqslant n/8}p_{k}x^{k}\left|
g_{n,x}- g_{n-k,x} \right|  +{\frac{g_{n,x}}{p(x)}}\sum_{k>n/8}p_{k}x^{k}
\\
&\quad+{\frac1{p(x)}}\sum_{n/8< k
\leqslant n}p_{k}x^{k}g_{n-k,x},
\end{split}
\end{equation*}
Suppose $0\leqslant x\leqslant e^{-1/n}$. Applying here Lemma
\ref{diffgnm} and the upper bound (\ref{p_n2}) for $p_n$ we have
\begin{equation*}
\begin{split}
\left| g_{x,n} -{\frac{p_n}{p(x)}}  \right|&\ll
{\frac{p(e^{-1/n})}{np(x)^{2}}}\sum_{k\leqslant
n/8}p_{k}x^{k}\left( \left( \frac{k}{n} \right)^\theta + {\frac1{(n(1-x))^\theta }}
 \right)
\\
&\quad+{\frac{G_x(e^{-1/n})}{np(x)}}\sum_{k> n/8}p_{k}x^{k}+
{\frac{p(e^{-1/n})}{n}}{\frac{x^{n/8}}{p(x)}}G_x(e^{-1/n})
\\
&=:S_{1}(x) +S_{2}(x) +S_{3}(x).
\end{split}
\end{equation*}
Applying Lemma \ref{intbounds} with $u(x)=p(x)^2$ and
 $u(x)=p(x)^2(1-x)^\theta$, for $1\leqslant j \leqslant n$ we have
\begin{multline*}
\int_{0}^{e^{-1/n}} S_{1}(x)x^{j-1}\,dx
\\
=\frac{p(e^{-1/n})}{n} \sum_{k\leqslant n/8}p_k\left( \left(
\frac{k}{n}\right)^\theta \int_{0}^{e^{-1/n}}
{\frac{x^{k+j-1}}{p(x)^2}}\,dx +{\frac1{n^\theta}}
\int_{0}^{e^{-1/n}} {\frac{x^{k+j-1}dx}{{p(x)^2}(1-x)^\theta }}
\right)
\\
\shoveleft{\ll {\frac{p(e^{-1/n})}{n}} \sum_{k\leqslant n/8}p_k\left( \left(
\frac{k}{n}\right)^\theta  {\frac1{(k+j)p(e^{-1/(k+j)})^2}}\right.}
\\
\shoveright{ \left.+{\frac1{n^\theta}}
\frac1{(k+j){p(e^{-1/(k+j)})^2}(1-e^{-1/(k+j)})^\theta }
\right)}
\\
\shoveleft{\ll {\frac{p(e^{-1/n})}{n}} \sum_{k\leqslant n/8}
{\frac{p_k}{(k+j)p(e^{-1/(k+j)})^2}}\left( \left( \frac{k}{n}\right)^\theta
 +\frac{(k+j)^\theta}{n^\theta}
\right)}
\\
\shoveleft{\ll {\frac{p(e^{-1/n})}{n}} \left(
 \frac{1}{j}\left( \frac{j}{n}\right)^\theta
{\frac1{p(e^{-1/j})^2}} \sum_{k\leqslant j} {p_k}
 +\sum_{k>j} {\frac{p_k}{k}} \left( \frac{k}{n}\right)^\theta
 {\frac1{p(e^{-1/k})^2}} \right).\hfill}
\end{multline*}
Applying here inequality $\sum_{k\leqslant j} p_k\leqslant
ep(e^{-1/j})$ in the first sum and the estimate (\ref{p_n2}) in the second one
we have
\[
\begin{split}
&\int_{0}^{e^{-1/n}} S_{1}(x)x^{j-1}\,dx
\\
&\ll \frac{p(e^{-1/n})}{n} \left(
 \frac{1}{j}\left( \frac{j}{n}\right)^\theta
{\frac1{p(e^{-1/j})}}
 +{\frac1{p(e^{-1/j})}}\sum_{k>j} {\frac1{k^2}}
 \left( \frac{k}{n}\right)^\theta \frac{p(e^{-1/j})}{p(e^{-1/k})} \right).
\end{split}
\]
 From the estimate
${\frac{p(e^{-1/j})}{p(e^{-1/k})}}\leqslant \left( \frac{j}{k}
\right)^{d^-}e^{d^-/j}$ of Lemma \ref{pvbound} we obtain
\[
\begin{split}
&\int_{0}^{e^{-1/n}} S_{1}(x)x^{j-1}\,dx
\\
&\ll {\frac{p(e^{-1/n})}{n}} \left(
 \frac{1}{j}\left( \frac{j}{n}\right)^\theta
{\frac1{p(e^{-1/j})}}
 +{\frac1{p(e^{-1/j})}}\sum_{k>j} {\frac1{k^2}}
 \left( \frac{k}{n}\right)^\theta \left( \frac{j}{k} \right)^{d^-}  \right)
\\
&\ll {\frac{p(e^{-1/n})}{n} } \left( {\frac{j^{\theta -1}}{n^\theta}}
\right) {\frac1{p(e^{-1/j})} }.
\end{split}
\]

Let us now estimate $S_2(x)$
$$
S_2(x)\ll \frac{p(e^{-1/n})}{np(x)^2}  \sum_{k>n/8}p_kx^k\leqslant
{\frac{p(e^{-1/n})x^{n/16}}{np(x)^2} }p(\sqrt{x}) \ll
{\frac{p(e^{-1/n})x^{n/16}}{np(x)} },
$$
since $p(y)\ll p(y^2)$ uniformly for $0\leqslant y <1$. Hence,
applying Lemma \ref{intbounds} we have
$$
\int_{0}^{e^{-1/n}}S_2(x)x^{j-1}\,dx \ll {\frac{p(e^{-1/n})}{n} }
\int_{0}^{e^{-1/n}}\frac{x^{n/16}}{p(x)}\,dx\ll \frac1{n^2}.
$$
In a similar  way we obtain the estimate
$$
\int_{0}^{e^{-1/n}}S_3(x)x^{j-1}\,dx \ll {\frac{p(e^{-1/n})^2}{n} }
\int_{0}^{e^{-1/n}}{\frac{x^{n/8}}{p(x)^2}} \,dx\ll {\frac1{n^2}}.
$$
Collecting the obtained estimates and noticing that
$$
\frac{p(e^{-1/n})}{ n } \left( {\frac{j^{\theta -1}}{n^\theta}}
\right) {\frac1{p(e^{-1/j})}}\gg \frac{1}{nj} \geqslant \frac1{n^2},
$$
for $1\leqslant j \leqslant n$, we obtain the proof of the lemma.
\end{proof}
Let us define
$$
V_j(z)=p(z)\int_{0}^{1}\frac{x^{j-1}}{p(zx)}\,dx =\sum_{m=0}^{\infty
}v_{m,j}z^m.
$$
Denoting
$$
q(z)=\frac1{p(z)}=\sum_{m=0}^{\infty}q_mz^m
$$
we can represent $V_j(z)$ as a product of two Taylor series
$$
V_j(z)=
 \sum_{m=0}^{\infty }v_{m,j}z^m=p(z)\int_{0}^{1}{{x^{j-1}}{q(zx)}}\,dx
=\sum_{m=0}^{\infty}p_mz^m \sum_{s=0}^{\infty}\frac{q_s}{s+j}z^s,
$$
which leads to representations of the Taylor coefficients of
$V_j(z)$ as a convolution of the coefficients of the appropriate
series
$$
v_{m,j}=\sum_{s=0}^m{\frac{p_{m-s}q_s}{s+j}}.
$$
On the other hand,
$$
V_j(z)=\int_{0}^{1}x^{j-1}G_x(z)\,dx=\sum_{m=0}^{\infty}z^m\int_0^1g_{m,x}x^{j-1}\,dx=\sum_{m=0}^{\infty
}v_{m,j}z^m,
$$
which means that
\begin{equation}
\label{v_positive}
v_{m,j}=\int_0^1g_{m,x}x^{j-1}\,dx\geqslant 0 .
\end{equation}
\begin{lem}
\label{f_mj}
We have
$$
v_{m,j}\ll \frac{1}{j^2}\quad \hbox{when}\quad 1\leqslant m\leqslant
j
$$
and  $v_{0,j}=\frac1j$.
\end{lem}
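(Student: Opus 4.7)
The identity $v_{0,j}=1/j$ is immediate from either representation of $v_{m,j}$: from the convolution $v_{m,j}=\sum_{s=0}^{m}p_{m-s}q_{s}/(s+j)$ the case $m=0$ gives $p_{0}q_{0}/j=1/j$, and from the integral form (\ref{v_positive}) the fact that $g_{0,x}\equiv 1$ yields $v_{0,j}=\int_{0}^{1}x^{j-1}\,dx=1/j$.

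For the bound $v_{m,j}\ll 1/j^{2}$ when $1\leqslant m\leqslant j$, my plan is to first derive a clean functional equation for $V_{j}(z)$. Integrating $V_{j}(z)=p(z)\int_{0}^{1}x^{j-1}/p(xz)\,dx$ by parts with $u=1/p(xz)$ and $dv=x^{j-1}\,dx$, and then using the identity $zp'(xz)=p(xz)\sum_{k\geqslant 1}d_{k}z^{k}x^{k-1}$, which follows from the differential equation $zp'(z)=p(z)\sum_{k}d_{k}z^{k}$ evaluated at $xz$, one finds that the boundary term contributes $1/j$ while the remaining integral reassembles into $(1/j)\sum_{k\geqslant 1}d_{k}z^{k}V_{j+k}(z)$. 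This produces the functional equation
\[
V_{j}(z)=\frac{1}{j}+\frac{1}{j}\sum_{k=1}^{\infty}d_{k}z^{k}V_{j+k}(z),
\]
whose $z^{m}$ coefficient for $m\geqslant 1$ gives the recurrence $v_{m,j}=\frac{1}{j}\sum_{k=1}^{m}d_{k}v_{m-k,j+k}$, together with the base case $v_{0,j}=1/j$.

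The next step is to iterate this recurrence exhaustively down to the base case: every chain ultimately terminates at some $v_{0,j+m}=1/(j+m)$, yielding the explicit formula
\[
v_{m,j}=\frac{1}{j+m}\sum_{n=1}^{m}\sum_{\substack{k_{1}+\cdots+k_{n}=m\\ k_{i}\geqslant 1}}\prod_{i=1}^{n}\frac{d_{k_{i}}}{j+k_{1}+\cdots+k_{i-1}}.
\]
When $m\leqslant j$, every denominator $j+k_{1}+\cdots+k_{i-1}$ is at least $j$, each $d_{k_{i}}\leqslant d^{+}$, and the number of compositions of $m$ into $n$ positive parts is $\binom{m-1}{n-1}$. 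Summing the resulting geometric series in $n$ gives
\[
v_{m,j}\leqslant \frac{1}{j+m}\sum_{n=1}^{m}\binom{m-1}{n-1}\left(\frac{d^{+}}{j}\right)^{n}=\frac{d^{+}}{j(j+m)}\left(1+\frac{d^{+}}{j}\right)^{m-1}\leqslant \frac{d^{+}e^{d^{+}}}{j^{2}},
\]
where the final inequality uses $(1+d^{+}/j)^{m-1}\leqslant (1+d^{+}/j)^{j}\leqslant e^{d^{+}}$ for $m\leqslant j$.

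The main obstacle is selecting the right way to exploit the recurrence. A naive induction on $m$ that simply uses $v_{l,j+k}\leqslant C/(j+k)^{2}$ fails, because summing over $k$ introduces a factor that cannot be absorbed unless $d^{+}<1$. The iterated formula avoids this by retaining the explicit tail factor $1/(j+m)$ from the terminal step; combined with the initial $1/j$ and the binomial-type resummation over compositions, it furnishes the required $1/j^{2}$ uniformly in $d^{+}$ whenever $m\leqslant j$.
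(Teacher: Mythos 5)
Your proof is correct, but it takes a genuinely different route from the paper's. The paper differentiates $V_j(z)$ to obtain the differential equation $zV_j'(z)=V_j(z)\sum_{k\geqslant 1}d_kz^k+1-jV_j(z)$, whose coefficient form is the recurrence $(m+j)v_{m,j}=\sum_{k=1}^m d_k v_{m-k,j}$ (same second index throughout); it then bounds $\sum_{k<m}v_{k,j}\leqslant eV_j(e^{-1/m})$ using the nonnegativity of the $v_{k,j}$ and inequality (\ref{upper_bound_for_sum}), and finishes with the integral estimate $\int_0^{e^{-1/m}}x^{j-1}p(x)^{-1}\,dx\ll e^{-j/m}/(jp(e^{-1/m}))$ of Lemma \ref{intbounds}, valid for $j\geqslant m$, which yields $v_{m,j}\ll 1/(j(m+j))$. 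You instead integrate by parts to get the functional equation $V_j(z)=\tfrac1j+\tfrac1j\sum_k d_kz^kV_{j+k}(z)$, whose coefficient recurrence $v_{m,j}=\tfrac1j\sum_{k=1}^m d_kv_{m-k,j+k}$ shifts the second index; unrolling it to the base case gives an exact sum over compositions of $m$, which you bound by $\tfrac{d^+}{j(j+m)}(1+d^+/j)^{m-1}\leqslant d^+e^{d^+}/j^2$ for $m\leqslant j$. I checked the integration by parts, the composition formula, and the binomial resummation; all are sound, and your remark that a naive induction on the $1/j^2$ bound closes only when $d^+<1$ correctly identifies why some extra device is needed. Your argument is more self-contained (it bypasses Lemma \ref{intbounds} and the positivity fact (\ref{v_positive}) entirely, since the unrolled identity is exact) and produces an explicit constant; the paper's argument is shorter on the page because it reuses machinery already established for the other coefficient sequences $p_n$, $c_{n,x}$, $g_{n,x}$.
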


\begin{proof}
Differentiating $V_j(z)$ we can easily check that this function
satisfies differential equation
$$
zV_j'(z)=V_j(z)\sum_{k=1}^{\infty}d_kz^k +1-jV_j(z).
$$
Putting here $z=0$, we obtain $V_j(0)=v_{0,j}={1/j}$.

Suppose that $m\geqslant 1$. The above differential equation for
$V_j(z)$ is equivalent to the recurrence relation between the Taylor
coefficients $v_{m,j}$, applying which  we obtain
\begin{equation*}
\begin{split}
v_{m,j}&=\frac{1}{m+j}\sum_{k=1}^m d_kv_{m-k,j}\leqslant
\frac{d^+eV_j(e^{-1/m})}{m+j}=\frac{d^+e}{m+j}p(e^{-1/m})\int_0^1
\frac{x^{j-1}dx}{p(xe^{-1/m})}
\\
&=\frac{d^+e}{m+j}p(e^{-1/m})e^{j/m}\int_0^{e^{-1/m}}
\frac{x^{j-1}dx}{p(x)}\ll {\frac{1}{j^2}},
\end{split}
\end{equation*}
for  $j\geqslant m$. Here we have used the fact that $v_{m,j}$ are
non-negative (\ref{v_positive}) and applied Lemma \ref{intbounds}.

\end{proof}

\begin{proof}[ Proof of theorem \ref{fundthm1}.]
The first step of the proof
is to express the coefficients $a_k$ in terms of a linear
combination of quantities $S(g;j)$. By the definition of quantity
$S(g;j)$ we see that its generating function can be expressed as
$$
\sum_{j=1}^{\infty}S(g;j)z^{j}=zg'(z)p(z).
$$
Dividing both sides of the above identity by $p(z)$ we obtain an
expression of derivative $g'(z)$ as a product of two functions
\[
q(z)\sum_{j=1}^{\infty}S(g;j)z^{j}=zg'(z),
\]
where
$$
q(z)=\frac{1}{p(z)}=\sum_{j=0}^{\infty}q_jz^j,
$$
which is equivalent to the identity expressing the Taylor
coefficient $ma_m$ of the derivative $g'(z)$ as a convolution
$$
ka_k=\sum_{j=1}^{k}S(g;j)q_{k-j}.
$$
We can now use this identity to replace coefficients $a_k$ by a sum
$(1/k)\sum_{j=1}^{k}S(g;j)q_{k-j}$ in the expression appearing on
the left hand side of the inequality (\ref{fund_ineq}) of our theorem
\begin{equation*}
\begin{split}
\lefteqn{\sum_{k=0}^na_kp_{n-k}-p_n g(e^{-1/n})}
\\
&=\sum_{k=1}^{n}p_{n-k}{\frac1k}\sum_{j=1}^{k}S(g;j)q_{k-j}
-p_n\sum_{k=1}^{\infty} {\frac{e^{-k/n}}{k}
}\sum_{j=1}^{k}S(g;j)q_{k-j}
\\
&= \sum_{j=1}^n S(g;j)\sum_{k=j}^n {\frac{p_{n-k}q_{k-j}}{k}}
-p_n\sum_{j=1}^{\infty}S(g;j)\sum_{k=j}^{\infty}{\frac{e^{-k/n}}{k}}q_{k-j}
\\
&=\frac{S(g;n)}{n}+\sum_{j=1}^{n-1}S(g;j)\sum_{s=0}^{n-j}\frac{p_{n-j-s}q_s}{s+j}-p_n
\sum_{j=1}^{\infty}S(g;j)\sum_{k=j}^{\infty}\frac{e^{-k/n}}{k}q_{k-j}.
\end{split}
\end{equation*}
recalling that
\[
v_{m,j}=\sum_{s=0}^m\frac{p_{m-s}q_s}{s+j}=\int_0^1g_{m,x}x^{j-1}\,dx
\]
obtain an identity
\begin{multline*}
\sum_{k=0}^na_kp_{n-k}-p_n g(e^{-1/n}) -\frac{S(g;n)}{n}
\\
=\sum_{j=1}^{n-1}S(g;j)\left( v_{n-j,j} -p_n \int_{0}^{e^{-1/n}}
\frac{x^{j-1}}{p(x)}\,dx \right)
\\
+p_n \sum_{j=n}^{\infty}S(g;j)\int_{0}^{e^{-1/n}} \frac{x^{j-1}}{p(x)}\,dx.
\end{multline*}
For brevity, let us denote by $R_n$ the quantity on the left hand side of the above identity
$$
R_n=\sum_{k=0}^na_kp_{n-k}-p_n g(e^{-1/n})-\frac{S(g;n)}{n}.
$$
Then we have
\begin{equation}
\label{|R_n|}
\begin{split}
|R_n|&\leqslant \sum_{1\leqslant j \leqslant n/2}|S(g;j)|\left|
\int_{0}^{1}{x^{j-1}}g_{n-j,x} \,dx -p_{n-j} \int_{0}^{e^{-1/n}}
{\frac{x^{j-1}}{p(x)}}dx \right|
\\
&\quad+\sum_{1\leqslant j \leqslant n/2}|S(g;j)| |p_n -p_{n-j}|
\int_{0}^{e^{-1/n}} {\frac{x^{j-1}}{p(x)}}\,dx
\\
&\quad + \sum_{n/2\leqslant j \leqslant n-1}|S(g;j)| v_{n-j,j} +p_n
\sum_{j\geqslant n/2}|S(g;j)|\int_{0}^{e^{-1/n}} \frac{x^{j-1}}{p(x)}\,dx.
\end{split}
\end{equation}
Since by (\ref{c_n_x_inequality}) we have $g_{m,x}  \leqslant
ed^+{\frac{G_j(e^{-1/m})}{m}}= {\frac{ed^+}{m}}
{\frac{p(e^{-1/m})}{p(e^{-1/m}x)}}$, therefore
\[
\begin{split}
\int_{e^{-1/n}}^{1}{x^{j-1}}g_{n-j,x} \,dx&\leqslant
{\frac{ed^+p(e^{-1/(n-j)})}{n-j}}\int_{e^{-1/n}}^{1}
{\frac{dx}{p(xe^{-1/(n-j)})}}
\\
&\ll
{\frac{p(e^{-1/n})}{n^2}}{\frac{1}{p(e^{-1/n})}}={\frac{1}{n^2}},
\end{split}
\]
when $j\leqslant n/2$.

This estimate allows us to evaluate the part
of the integral  of ${x^{j-1}}g_{n-j,x}$ over interval
$e^{-1/n}\leqslant x\leqslant 1$ in the first sum on the right hand
side of inequality (\ref{|R_n|}), making the application of Lemma
\ref{boundofg} possible to estimate the difference of integrals in
the first sum. The second sum in the inequality (\ref{|R_n|}) can be
evaluated using upper bound for difference $|p_n-p_{n-j}|$ of Lemma
\ref{diffpnm} and the estimate of integral $\int_{0}^{e^{-1/n}}
{\frac{x^{j-1}}{p(x)}}\,dx\ll \frac{1}{jp(e^{-1/j})}$ provided by
Lemma  \ref{intbounds}. Finally, applying the estimate  $v_{n-j,j}\ll
1/j^2$ of Lemma
 \ref{f_mj}
 that is valid in the region $j\geqslant n/2$ to evaluate the third sum and the upper  bound
 $\int_{0}^{e^{-1/n}} {\frac{x^{j-1}}{p(x)}}\,dx\ll\frac{e^{-j/n}}{jp(e^{-1/n})}$
 of Lemma \ref{intbounds} to evaluate the fourth sum in (\ref{|R_n|}) our inequality
 for $|R_n|$ becomes
\begin{equation*}
\begin{split}
|R_n|&\ll \sum_{1\leqslant j \leqslant n/2}|S(g;j)|
\frac{p(e^{-1/(n-j)})}{n-j}  \left( {\frac{j^{\theta
-1}}{(n-j)^\theta}} \right) {\frac1{p(e^{-1/j})}}
\\
&\quad+\sum_{1\leqslant j \leqslant n/2}|S(g;j)|  p_n \left( \frac{j}{n} \right)^{\theta} {\frac1{jp(e^{-1/j})}}+{\frac1{n^2}}\sum_{n/2\leqslant j \leqslant
n-1}|S(g;j)|
\\
&\quad +p_n\sum_{ j > n/2}|S(g;j)| \frac{e^{-j/n}}{jp(e^{-1/n})}
\\
&\ll p_n \left( {\frac1{n^\theta}}\sum_{j=1}^n {\frac{|S(g;j)|}{p(e^{-1/j})}}j^{\theta -1}
+ {\frac1{p(e^{-1/n})}}\sum_{j>n}{\frac{|S(g;j)|}{j}}e^{-j/n}\right).
\end{split}
\end{equation*}
The theorem is proved.
\end{proof}

\begin{proof}[ Proof of theorem \ref{voronmean}.]

{\it 1) Sufficiency.} The
 second condition (\ref{second_cond}) of the theorem means that $S(g;n)=o(np_n)$.
 Applying this asymptotic  to the right hand side of the inequality
 of  Theorem \ref{fundthm1} we get an estimate
$$
{\frac1{p_n}}\sum_{k=0}^{n}a_kp_{n-k} =g(e^{-1/n})+o(1).
$$
By the first condition (\ref{first_cond}) of the theorem $g(x)\to A$
as $x\uparrow 1$, which means that the right hand of the above
estimate converges to $A$ as $n\to \infty$. This  proves that
conditions (\ref{first_cond}) and (\ref{second_cond}) of the theorem
imply that
\begin{equation}
\label{limit_of_sum}
 \lim_{n\to \infty}{\frac1{p_n}}\sum_{k=0}^{n}a_kp_{n-k} =A.
\end{equation}
{\it 2) Necessity.} Suppose now that limit (\ref{limit_of_sum})
exists. Let us denote
$$
r_n=\sum_{k=0}^{n}a_kp_{n-k}.
$$
Our assumption (\ref{limit_of_sum}) means that
$r_n=Ap_n(1+\varepsilon_n )$, where $\varepsilon_n \to \infty$ as
$n\to \infty$.

We can express the generating function of $S(g;n)$ in terms of
generating functions of quantities $r_n$ and $d_n$ as
\begin{equation*}
\begin{split}
\sum_{j=1}^{\infty}S(g;j)z^{j}&=zg'(z)p(z)=z(p(z)g(z))'-zp'(z)g(z)
\\
&=z(p(z)g(z))'-p(z)g(z)\sum_{k=1}^{\infty}d_kz^k
\\
&=\sum_{n=1}^{\infty}nr_nz^n-\sum_{n=1}^{\infty}r_nz^n
\sum_{k=1}^{\infty}d_kz^k.
\end{split}
\end{equation*}
Equating the coefficients in the Taylor expansion of the series on
the both sides of the above equation we obtain an equation
expressing $S(g;n)$ in terms of  $r_j$ as
\[
S(g;n)=nr_n-\sum_{k=1}^nd_kr_{n-k}.
\]
Inserting here $r_n=Ap_n(1+\varepsilon_n )$ we obtain
\begin{equation*}
\begin{split}
S(g;n)&=Anp_n-A\sum_{k=1}^nd_kp_{n-k}+Anp_n\varepsilon_n-
A\sum_{k=1}^nd_kp_{n-k}\varepsilon_{n-k}
\\
&=Anp_n\varepsilon_n-
A\sum_{k=1}^nd_kp_{n-k}\varepsilon_{n-k}=o(np_n),
\end{split}
\end{equation*}
since $np_n=\sum_{k=1}^nd_kp_{n-k}$ and $np_n\gg p(e^{-1/n})\to
\infty$ as $n\to \infty$.

The necessity of condition 2) is proved.

The necessity of condition 1) is well known, see e. g.
\cite{hardy_divergent}. It is obtained by noticing that
$$
g(x)=\frac{g(x)p(x)}{p(x)}
=\frac{r_0+r_2x+\cdots+r_nx^n+\cdots}{p_0+p_2x+\cdots+p_nx^n+\cdots}.
$$
Using estimate $r_n=Ap_n(1+o(1))$ to evaluate the right hand side of
the above equation  and taking into
account the  fact that
$p(x)\to \infty$ as $x\nearrow 1$ we conclude that the left hand side
of the above equation has a limit
$$
g(x)\to A
$$
as $x\nearrow 1$.

The theorem is proved.
\end{proof}

\subsection{Random permutations}
Recall that if we denote by
 $\alpha_k(\sigma)$  the number of cycles in permutation $\sigma$ whose length is
 equal to $k$ then the value of  multiplicative function $f(\sigma)$
 can be expressed as a product
$$
f(\sigma)=\hat f(1)^{\alpha_1 (\sigma)} \hat f (2)^{\alpha_2
(\sigma)} \ldots \hat f(n)^{\alpha_n(\sigma)},
$$
where we assume that $0^0=1$ in the above relationship.

It can be proved by elementary combinatorial arguments (see e.g.
\cite{comtet_1974}, page 233, Theorem B) that the quantity of
permutations $\sigma \in S_n$ such that $\alpha_j(\sigma)=k_j$ for
$1\leqslant j \leqslant n$ is equal to
$$
n!\prod_{j=1}^{n} {\frac{1}{k_j!j^{k_j}}},
$$ when
$k_1+2k_2+\cdots +nk_n=n$.   This fact allows us to express the sum
of values $f(\sigma )$ of  multiplicative function $f$ over all
permutations $\sigma\in S_n$  as
$$
\sum_{\sigma \in S_n}f(\sigma )= n!
 \sum_{k_1+2k_2+\cdots +nk_n=n} {\prod_{j=1}^{n}}
          {     {     \left(     \frac{ \hat f(j)}{j}     \right)   }^{k_j}    }
           {\frac1{k_j!}}.
$$
The above expression leads to the identity of the corresponding
generating functions
\begin{equation}
\label{genfunc} 1+\sum_{m=1}^{\infty}\Bigl({\frac1{m!}}\sum_{\sigma
\in S_m}f(\sigma )\Bigr)z^m=\prod_{j\geqslant 1}\sum_{k=0}^\infty \biggl(\frac{\hat
f(j)}{j}\biggr)^k\frac{z^{jk}}{k!}=\exp \left\{ \sum_{j=1}^{\infty} {{\frac{\hat
f(j) }{j}}z^j}
         \right\}.
\end{equation}
This in its turn leads to the expression for the mean value of a
multiplicative function
$$
M_n^d(f)= \frac{\sum_{\sigma \in S_n}f(\sigma )d(\sigma
)}{\sum_{\sigma \in S_n}d(\sigma )}=\frac{[z^n]\exp \left\{
 \sum_{j=1}^{\infty} {{\frac{\hat{d}(j)\hat
f(j)}{j}}z^j}
         \right\}}{[z^n]\exp \left\{
 \sum_{j=1}^{\infty} {{\frac{\hat
d(j)}{j}}z^j}
         \right\}}
$$
as a ratio of coefficients of appropriate generating functions. Let
us denote the generating functions appearing in the above expression
of the mean value $M_n^d(f)$ as
\[
M(z)=\sum_{j=0}^{\infty}M_jz^j=\exp \left\{
 \sum_{j=1}^{\infty} {{\frac{\hat{d}(j)\hat
f(j)}{j}}z^j}
         \right\}
\]
and
\[
p(z) = \sum_{j=0}^{\infty}p_jz^j=\exp \left\{
 \sum_{j=1}^{\infty} \frac{\hat{d}(j)}{j}z^j \right\}.
\]
Since later we will apply Theorem \ref{fundthm1} with
$\hat{d}(j)=d_j$, so from now on we will  identify
 the quantity $\hat{d}(j)$  with $d_j$. We also will use the same
notation $p(z)$ for the function $\exp \left\{
 \sum_{j=1}^{\infty} \frac{\hat{d}(j)}{j}z^j \right\}$ as  for the
  for function $\exp \left\{
 \sum_{j=1}^{\infty} \frac{d_j}{j}z^j \right\}$.

We can represent the generating function $M(z)$ as a product of two
functions
\begin{equation}
\label{genfunc2}
\begin{split} M(z)&
=\exp \left\{\sum_{j=1}^{\infty} {{\frac{\hat{d}(j)}{j}}z^j}
\right\} \exp \left\{\sum_{j=1}^{\infty} \hat{d}(j){\frac{\hat
f(j)-1}{j}}z^j \right\}
\\
&= p(z)m(z) ,
\end{split}
\end{equation}
where
\[
m(z)=\sum_{j=0}^{\infty}m_jz^j=\exp \left\{\sum_{j=1}^{\infty}
\hat{d}(j){\frac{\hat f(j)-1}{j}}z^j \right\}.
\]
With these notations we can express the mean value of a multiplicative
function as
$$
M_n^d(f)=\frac{[z^n]p(z)m(z) }{[z^n]p(z)}=\frac{M_n}{p_n}={\frac{1}{p_{n}}}\sum_{j=0}^n m_jp_{n-j}.
$$

Before going into the details of proof of Theorem \ref{meanf} let us
 at first illustrate the ideas of application of Tauberian theory
for Voronoi summation for the analysis of mean values $M_n^d(f)$ on
a simple example.
\begin{prop}
\label{prop_simple_mult_f} Suppose we are given a fixed sequence  $\hat{f}(1),\hat{f}(2),\ldots,\hat{f}(n),\ldots$ of
complex numbers,
such that the modulus of the members of this sequence does not exceed
one $|\hat{f}(j)|\leqslant 1$ and
\[
\lim_{k\to \infty}\hat{f}(k)=1.
\]
Then the sequence of mean values of the corresponding multiplicative
function $f(\sigma)$  has the following asymptotic
\[
M_n^d(f)= \frac{\sum_{\sigma \in S_n}f(\sigma )d(\sigma
)}{\sum_{\sigma \in S_n}d(\sigma )}=\exp\left\{\sum_{j=1}^{\infty}
\hat{d}(j){\frac{\hat f(j)-1}{j}}e^{-j/n}\right\}+o(1),
\]
as $n\to \infty$.
\end{prop}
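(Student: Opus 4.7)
The plan is to apply Theorem \ref{fundthm1} to the factorisation $M(z)=p(z)m(z)$ of (\ref{genfunc2}), so that the mean $M_n^d(f)=\frac{1}{p_n}\sum_{k=0}^n m_k p_{n-k}$ becomes the Voronoi mean of the sequence $m_k=[z^k]m(z)$ with weights $p_j$. Taking $g=m$ in the inequality (\ref{fund_ineq}) reduces the proposition to showing that the three quantities
\[
\frac{S(m;n)}{np_n},\qquad \frac{1}{n^\theta}\sum_{j=1}^n\frac{|S(m;j)|}{p_j}\,j^{\theta-2},\qquad \frac{1}{p(e^{-1/n})}\sum_{j>n}\frac{|S(m;j)|}{j}\,e^{-j/n}
\]
are all $o(1)$, with $S(m;j)=\sum_{k=0}^j k m_k p_{j-k}$ and $\theta=\min\{1,d^-\}$.

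The crucial ingredient will be a tractable upper bound on $|S(m;j)|$. Differentiating the exponential defining $m(z)$ gives $zm'(z)p(z)=M(z)\sum_{k\geq 1}d_k(\hat f(k)-1)z^k$, so comparing Taylor coefficients yields
\[
S(m;j)=\sum_{k=1}^j d_k(\hat f(k)-1)M_{j-k},\qquad M_l:=[z^l]M(z)=\frac{1}{l!}\sum_{\sigma\in S_l}f(\sigma)d(\sigma).
\]
The hypothesis $|\hat f(j)|\leq 1$ forces $|f(\sigma)|\leq 1$ and hence $|M_l|\leq p_l$; writing $\delta_k:=|\hat f(k)-1|$, so that $\delta_k\leq 2$ and $\delta_k\to 0$ by assumption, I obtain the pointwise estimate $|S(m;j)|\leq d^+\sum_{k=1}^j\delta_k\, p_{j-k}$.

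From this point each of the three error quantities becomes a weighted average of a null sequence. For the principal term $S(m;n)/(np_n)$, the recurrence $np_n=\sum_{k=1}^n d_k p_{n-k}$ together with the bounds $\sum_{k=0}^{n-1}p_k\leq e\,p(e^{-1/n})$ from (\ref{upper_bound_for_sum}) and $np_n\gg p(e^{-1/n})$ from (\ref{plwbound}) shows that $|S(m;n)|/(np_n)$ is dominated by a convex combination of the $\delta_k$; splitting at a fixed $N$ with $\delta_k<\varepsilon$ for $k>N$ then gives $o(1)$. The same crude bound $\sum_{k=0}^j p_k\ll jp_j$ yields the uniform estimate $|S(m;j)|\ll jp_j$, so $\eta_j:=|S(m;j)|/(jp_j)$ also tends to zero. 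The middle sum becomes $\frac{1}{n^\theta}\sum_{j=1}^n\eta_j\, j^{\theta-1}$, a weighted average of $\eta_j$ against weights of total mass $O(n^\theta)$, while the tail sum is bounded by $\frac{1}{p(e^{-1/n})}\sum_{j>n}\eta_j\, p_j\, e^{-j/n}$, a weighted average with weights summing to at most $p(e^{-1/n})$; both are $o(1)$ by the same split-at-$N$ argument.

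The main obstacle is simply the bookkeeping in these three truncations; there is no analytic difficulty beyond what is already packaged in Theorem \ref{fundthm1} and the earlier lemmas, because the bound $|M_l|\leq p_l$ propagates the condition $|f(\sigma)|\leq 1$ through the decomposition $M=pm$ without requiring any direct control on $|m_l|$ itself, which in general grows like a power of $l$.
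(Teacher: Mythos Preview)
Your proposal is correct and follows essentially the same route as the paper: factor $M(z)=p(z)m(z)$, compute $S(m;j)=\sum_{k=1}^j d_k(\hat f(k)-1)M_{j-k}$ via the differential equation for $m$, bound $|M_l|\le p_l$ from $|\hat f|\le 1$, and conclude by a split-at-$N$ averaging argument that the Tauberian remainder is $o(1)$. The only cosmetic difference is packaging: the paper verifies just the single condition $S(m;n)=o(np_n)$ and then invokes the implication (already established in the sufficiency proof of Theorem~\ref{voronmean}) that this forces all three terms on the right of (\ref{fund_ineq}) to be $o(1)$, whereas you unpack Theorem~\ref{fundthm1} directly and check the three terms by hand.
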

\begin{proof} We have already shown that the generating function
$M(z)$ is  a product of two functions $M(z)=p(z)m(z)$. By our
results on Voronoi means, if we can prove that Tauberian condition
\begin{equation}
\label{tauberian_cond_for_m}
[z^{n-1}]p(z)m'(z)=\sum_{k=0}^nkm_kp_{n-k}=o(np_n)
\end{equation}
holds, then
\[
[z^n]p(z)m(z)=p_n\bigl(m(e^{-1/n})+o(1)\bigr)\quad\hbox{as}\quad
n\to \infty.
\]
which will imply an estimate for the mean value
\[
M_n^d(f)=\frac{[z^n]p(z)m(z) }{[z^n]p(z)}=m(e^{-1/n})+o(1).
\]
To check the Tauberian condition (\ref{tauberian_cond_for_m}) we
note that
\[
p(z)m'(z)=p(z)m(z)\sum_{j=1}^{\infty} \hat{d}(j){\frac{\hat
f(j)-1}{j}}z^{j-1}=M(z)\sum_{j=1}^{\infty} \hat{d}(j){\frac{\hat
f(j)-1}{j}}z^{j-1},
\]
which means that
\begin{equation}
\label{expression_for_S(m,n)}
S(m;n)=[z^{n-1}]p(z)m'(z)=\sum_{j=1}^n\hat{d}(j)\bigl(\hat
f(j)-1\bigr)M_{n-j}.
\end{equation}
Since $\bigl|\hat{f}(j)\bigr|\leqslant 1$ we have
\[
\begin{split}
|M_k|&=\left|[z^k]\exp\left\{\sum_{j=1}^\infty\frac{d_j\hat{f}(j)}{j}z^j\right\}\right|=
\left|\sum_{\ell_1+2\ell_2+\cdots +k\ell_k=k} {\prod_{j=1}^{k}}
          {     {     \left(     \frac{ d_j\hat  f(j)}{j}     \right)   }^{\ell_j}    }
           {\frac1{\ell_j!}}\right|
\\
&\leqslant\sum_{\ell_1+2\ell_2+\cdots +k\ell_k=k} {\prod_{j=1}^{k}}
          {     {     \left(     \frac{  d_j }{j}     \right)   }^{\ell_j}    }
           {\frac1{\ell_j!}}
=
[z^k]\exp\left\{\sum_{j=1}^\infty\frac{d_j}{j}z^j\right\}=p_k.
\end{split}
\]
Applying inequalities $|M_k|\leqslant p_k$ and $d_j=\hat{d}(j)\leqslant d^+$ to estimate
 the right hand side of the identity (\ref{expression_for_S(m,n)}) we conclude that in order to show that
$S(m;n)=o(np_n)$ it is enough to check that
\begin{equation}
\label{sum_t}
\sum_{j=1}^n\bigl|\hat f(j)-1\bigr|p_{n-j}=o(np_n).
\end{equation}
From the recurrence relations (\ref{p_n}) for $p_n$ we obtain an
inequality
\[
p_j=\frac{1}{j}\sum_{k=1}^jd_kp_{j-k}\leqslant \frac{d^+}{j}\sum_{k=0}^{j-1}p_{k}\leqslant
\frac{d^+}{d^-j}\sum_{k=1}^nd_kp_{n-k}=\frac{d^+n}{d^-j}p_n
\]
for any $1\leqslant j\leqslant n$. This inequality enables us to
show that a finite number of  the first summands of the  sum of
(\ref{sum_t}) is negligible. That is, for any fixed $T<n$ the
inequality
\[
\begin{split}
\sum_{j=1}^n\bigl|\hat f(j)-1\bigr|p_{n-j}&\leqslant
\frac{2d^+np_n}{d^-}\sum_{j\leqslant T} \frac{1}{n-j}
+\sup_{j\geqslant T}\bigl|\hat f(j)-1\bigr|\sum_{j=1}^np_{n-j}
\\
&\leqslant  \frac{2np_nd^+T}{d^-(n-T)}+\sup_{j\geqslant T}\bigl|\hat
f(j)-1\bigr|\frac{np_n}{d^-}
\end{split}
\]
holds. By the condition of our proposition $\sup_{j\geqslant
T}\bigl|\hat f(j)-1\bigr|\to 0$ as $T\to \infty$. Therefore  if we
chose, for example $T:=\sqrt{n}$, then  the second term of the sum
in the last inequality will be $o(np_n)$ while the first term  will
be of order $O(\sqrt{n}p_n)=o(np_n)$.  This proves that sum of both
terms is $o(np_n)$.
\end{proof}
A number of conclusions can be drawn from the just proven
proposition. First of all it is clear that if multiplicative
function $f(\sigma)$ satisfies the conditions of the proposition,
then the mean value has a zero limit
\[
\lim_{n\to\infty}M_n^d(f)=0
\]
if and only if diverges the series
\[
\sum_{j=1}^\infty \frac{1-\Re \hat{f}(j)}{j}=\infty.
\]
The case of convergence of the above series can  be split into
two cases. The existence of the non-zero limit
$\lim_{n\to\infty}M_n^d(f)\not=0$ is equivalent to convergence of
the series
\[
\sum_{j=1}^\infty d_j\frac{\hat{f}(j)-1}{j}.
\]
If the imaginary part of the above series diverges then
$M_n^d(f)=c(f) e^{iL(n)}+o(1)$, where $L(n)$ is a real and slowly
varying function and $c(f)=e^{\sum_{j=1}^\infty d_j\frac{\Re
\hat{f}(j)-1}{j}}$ is a positive constant.

The result of Proposition \ref{prop_simple_mult_f} and its
consequences are not new. They follow from more general results of
Manstavi\v{c}ius \cite{manst_decomposable_2002} that were obtained
by using a different approach.
\begin{proof}[Proof of Theorem \ref{meanf}] Since $\hat f(k)$ for $k>n$ do not influence the $n$-th Taylor coefficient $M_n$ of the generating
function $M(z)$,
 we assume that $\hat f(k)=1$ for $k>n$.
In the proof of the previous Proposition we have already obtained
inequality
$$
|S(m;j)|=\bigl|[z^{j-1}]p(z)m'(z)\bigr|\leqslant d^+ \sum_{k=1}^{j}|\hat f(k) -1|p_{j-k}.
$$
 Using this estimate to bound the right hand side of the inequality
 of Theorem \ref{fundthm1} with $g(z)=m(z)$
we obtain inequality
\begin{multline}
\label{inequality_mean} \left| {\frac{M_n}{p_n}}- m(e^{-1/n})
\right|=\left| {\frac{M_n}{p_n}}- \exp \left\{
\sum_{k=1}^{n}{\frac{d_k(\hat f(k)-1)}{k}}e^{-k/n} \right\} \right|
\\
\leqslant c \left( {\frac1{np_n}}\sum_{k=1}^{n}|\hat f(k) -1|p_{n-k}+
{\frac1{n^\theta }}\sum_{j=1}^n{\frac{j^{\theta -1}}{p(e^{-1/j})}}
\sum_{k=1}^{j}|\hat f(k) -1|p_{j-k} \right.
\\
\left. \quad\mbox{}+ {\frac1{p(e^{-1/n})}}
\sum_{j>n}{\frac{e^{-j/n}}j}\sum_{k=1}^{j}|\hat f(k) -1|p_{j-k} \right) ,
\end{multline}
where  $\theta =\min \{ 1, d^- \}$ and $c=c( d^+, d^- )$. The
inequality in the formulation of the theorem will follow after we
 find simpler estimates for the second and the third term in
the sum on the right hand side of the above inequality. Let us start
with the second term. Changing the order of summation we get
\begin{equation*}
\begin{split}
&{\frac1{n^\theta }}\sum_{j=1}^n{\frac{j^{\theta -1}}{p(e^{-1/j})}}
\sum_{k=1}^{j}|\hat f(k) -1|p_{j-k}= {\frac1{n^\theta
}}\sum_{k=1}^{n}|\hat f(k)-1| \sum_{n\geqslant j\geqslant
k}{\frac{j^{\theta -1}}{p(e^{-1/j})}}p_{j-k}
\\
&\ll {\frac1{n^\theta }}\sum_{k=1}^{n}|\hat f(k)-1| \sum_{n\geqslant
j\geqslant 2k}{\frac{j^{\theta
-1}}{p(e^{-1/j})}}{\frac{p(e^{-1/(j-k)})}{j-k}}
\\
&\quad+
 {\frac{1}{n^\theta }}\sum_{k=1}^{n}|\hat f(k)-1| k^{\theta -1}\sum_{2k\geqslant
j\geqslant k}{\frac{p_{j-k}}{p(e^{-1/j})}}.
\end{split}
\end{equation*}
As a Taylor series with positive coefficients function $p(x)$
is increasing for increasing values of $x$.
Therefore for $j>k$ we have
${p(e^{-1/(j-k)})}\leqslant{p(e^{-1/j})}$ and $p(e^{-1/j})\geqslant p(e^{-1/k})$.
Using these inequalities
to evaluate the  last estimate we get
\[
\begin{split}
&{\frac1{n^\theta }}\sum_{j=1}^n{\frac{j^{\theta -1}}{p(e^{-1/j})}}
\sum_{k=1}^{j}|\hat f(k) -1|p_{j-k}
\\
&\ll {\frac{1}{n^\theta}}\sum_{k=1}^{n}|\hat f(k)-1|\sum_{n\geqslant
j \geqslant 2k} j^{\theta -2}+{\frac1{n^\theta}}\sum_{k=1}^{n}|\hat
f(k)-1|k^{\theta -1} {\frac1{p(e^{-1/k})}}\sum_{s=0}^k p_s
\\
&\ll {\frac1{n^\theta}}\sum_{k=1}^{n}|\hat f(k)-1|\left( k^{\theta
-1} +\int_k^nx^{\theta-2}\,dx \right)
\end{split}
\]
The third term can be handled in a similar way
\begin{multline*}
{\frac1{p(e^{-1/n})}}\sum_{j>n}{\frac{e^{-j/n}}j}\sum_{k=1}^n|\hat f(k)-1|p_{j-k}={\frac1{p(e^{-1/n})}}\sum_{k=1}^n|\hat f(k)-1|\sum_{j>n}{\frac{e^{-j/n}}j}p_{j-k}
\\
\leqslant \frac1n\sum_{k=1}^n|\hat f(k)-1|{\frac{e^{-k/n}}{p(e^{-1/n})}}
\sum_{j>n}{{e^{-(j-k)/n}}}p_{j-k}\leqslant {\frac1n}\sum_{k=1}^n|\hat f(k)-1|.
\end{multline*}
The statement of the theorem will follow if we use inequality
\[
\left| \sum_{k=1}^{n}{\frac{d_k(\hat f(k)-1)}{k}}e^{-k/n} -
\sum_{k=1}^{n}\frac{d_k(\hat f(k)-1)}{k}\right| \leqslant
\frac{1}{n}\sum_{k=1}^{n}d_k|\hat f(k)-1|
\]
to estimate the quantity under exponent in our inequality
(\ref{inequality_mean}).

 The theorem is proved.
 \end{proof}

Let us define
$$
L_n(z)=\sum_{j=1}^nd_j{\frac{\hat f(j)-1}{j}}z^j \quad \hbox{and}
\quad  \rho_n (p)=\left( \sum_{j=1}^n{\frac{|\hat f(j)-1|^p}{j}}
\right)^{1/p},
$$
moreover we assume that
$$
\rho_n(\infty)=\lim_{p\to \infty}\rho_n (p)=\max_{1\leqslant k
\leqslant n}|\hat f(k) -1|\quad\hbox{and}\quad \rho(p)=\left(
\sum_{j=1}^\infty{\frac{|\hat f(j)-1|^p}{j}} \right)^{1/p},
$$
likewise we will write $L(z)=\sum_{j=1}^\infty d_j{\frac{\hat
f(j)-1}{j}}z^j$. For brevity, we will often write simply $\rho$
instead of $\rho(p)$ if the value of $p$ is known from the context.
\begin{lem}
\label{diffLmn} For $n,m\geqslant 1$ and $1/p+1/q=1$ with $\infty
\geqslant p>1$, we have
\[
 |L(e^{-1/n})-L(e^{-1/m})|\leqslant d^+ \rho (p) \left(1+\left|\log\frac{n}{m}\right|\right)
\]
and
\[
|L(e^{-1/n})|\leqslant d^+ \rho (p)(1+\log n).
\]
\end{lem}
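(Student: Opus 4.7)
The plan is to prove both bounds uniformly by a Hölder-type argument combined with the elementary identity $\sum_{j\geqslant 1}x^j/j=-\log(1-x)$ and the standard estimate $ye^{-y}\leqslant 1-e^{-y}\leqslant y$. By symmetry I may assume $n\geqslant m\geqslant 1$, which accounts for the $|\log(n/m)|$ in the final form of the first inequality.

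For the first inequality I would start from
\[
L(e^{-1/n})-L(e^{-1/m})=\sum_{j=1}^\infty d_j\,\frac{\hat f(j)-1}{j}\bigl(e^{-j/n}-e^{-j/m}\bigr),
\]
use $|d_j|\leqslant d^+$, and factor the summands as $\frac{|\hat f(j)-1|}{j^{1/p}}\cdot\frac{e^{-j/n}-e^{-j/m}}{j^{1/q}}$ before applying Hölder's inequality. This produces
\[
|L(e^{-1/n})-L(e^{-1/m})|\leqslant d^+\rho(p)\Bigl(\sum_{j=1}^\infty\frac{(e^{-j/n}-e^{-j/m})^q}{j}\Bigr)^{1/q}.
\]
Since $0\leqslant e^{-j/n}-e^{-j/m}\leqslant 1$ and $q\geqslant 1$, the pointwise inequality $x^q\leqslant x$ reduces this to a first-power sum that telescopes to $\log\bigl((1-e^{-1/m})/(1-e^{-1/n})\bigr)$. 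The two-sided bound on $1-e^{-y}$ then gives $\log(n/m)+1/n\leqslant 1+\log(n/m)$, and because this quantity is at least $1$ for $n\geqslant m\geqslant 1$, raising it to the power $1/q\leqslant 1$ does not enlarge it.

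The second inequality follows by the same template: begin with $|L(e^{-1/n})|\leqslant d^+\sum_{j\geqslant 1}|\hat f(j)-1|e^{-j/n}/j$, apply the identical Hölder split, use $e^{-jq/n}\leqslant e^{-j/n}$ for $q\geqslant 1$, and conclude from $\sum_{j\geqslant 1}e^{-j/n}/j=-\log(1-e^{-1/n})\leqslant \log n+1/n\leqslant 1+\log n$. The final step $(\cdot)^{1/q}\leqslant (\cdot)$ uses, once more, that the inner quantity exceeds $1$. I do not anticipate any real obstacle; the only care required is the case $p=\infty$ (so $q=1$), where Hölder degenerates to the obvious $\ell^\infty$--$\ell^1$ pairing and the $1/q$-th power step is trivial, and the explicit appeal to symmetry in $(n,m)$ to replace $\log(n/m)$ by $|\log(n/m)|$.
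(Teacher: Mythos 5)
Your proposal is correct and follows essentially the same route as the paper: a Hölder split of $\frac{|\hat f(j)-1|}{j}$ against the exponential differences, the pointwise reduction $x^q\leqslant x$ for $0\leqslant x\leqslant 1$, the telescoping identity $\sum_j (e^{-j/n}-e^{-j/m})/j=\log\bigl((1-e^{-1/m})/(1-e^{-1/n})\bigr)$, the bound $ye^{-y}\leqslant 1-e^{-y}\leqslant y$, and finally $(\cdot)^{1/q}\leqslant(\cdot)$ because the base exceeds $1$. The handling of $p=\infty$ by degeneration of Hölder (or by letting $p\to\infty$, as the paper does) is also consistent.
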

\begin{proof} Without loss of generality we can assume that $n\geqslant m\geqslant 1$.
Suppose first that $p<\infty$. Applying
Cauchy's  inequality with parameters $p,q$ we have
\[
\begin{split}
 |L(e^{-\frac1n})-L(e^{-\frac1m})|
 &\leqslant d^+\sum_{j=1}^\infty\frac{|\hat f (j)-1|}{j}|e^{-j/n}-e^{-j/m}|
 \\
 &\leqslant d^+\rho(p)\left(\sum_{j=1}^\infty
 \frac{|e^{-j/n}-e^{-j/m}|^q}{j}\right)^{1/q}
 \\
 &\leqslant d^+\rho(p)\left(\sum_{j=1}^\infty
 \frac{e^{-j/n}-e^{-j/m}}{j}\right)^{1/q}
 \\
 & =d^+\rho(p)\left(\log\frac{1-e^{-1/m}}{1-e^{-1/n}}\right)^{1/q}
 \\
 & \leqslant d^+\rho(p)\left(\frac{1}{n}+\log\frac{n}{m}\right)^{1/q}.
\end{split}
\]
in the last step we used inequality $e^{-x}x<1-e^{-x}<x$, which is
true for all $x>0$, to estimate the ratio of quantities
$\frac{1-e^{-1/m}}{1-e^{-1/n}}$ under the sign of logarithm. Finally
estimating the fraction $1/m$ in the last expression by means of a
crude upper bound $\frac1m\leqslant 1$ and taking
 into account that $1/q\leqslant 1$ we obtain the first inequality in the statement of the lemma for
finite values of $p>1$.
Allowing $p\to \infty$ we see that this inequality is true for
$p=\infty$
 also. Similar considerations lead to the second inequality.
\end{proof}

 \begin{lem}
 \label{sumpn}
 For all $n\geqslant 1$ and $\varepsilon\geqslant 0$, $q\geqslant 1$ fixed  such that $q(d^-
-1)-\varepsilon>-1$, then
$$
\sum_{j=1}^nj^{-\varepsilon} p_j^q\ll n^{1-\varepsilon}p_n^q    \quad \hbox{and}\quad \sum_{j=1}^{n}
                {\frac1j}
       {\left| { \frac{p_{n-j}}{p_{n}} }
       -1 \right|}^q\ll 1.
$$
\end{lem}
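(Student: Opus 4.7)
My plan is to prove the two estimates in sequence, deducing the second from the first. For the first inequality, I would begin by obtaining a clean pointwise bound on the ratio $p_j/p_n$. Combining the upper bound $p_j\ll p(e^{-1/j})/j$ from (\ref{p_n2}), the lower bound $p_n\gg p(e^{-1/n})/n$ from (\ref{plwbound}), and the ratio estimate $p(e^{-1/j})\ll (j/n)^{d^-}p(e^{-1/n})$ (the upper half of Lemma \ref{pvbound} applied with the roles $m=n$, $n=j$, absorbing $e^{d^-/j}\leqslant e^{d^-}$), I arrive at $p_j\ll (j/n)^{d^- -1}p_n$ for $1\leqslant j\leqslant n$. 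Raising this to the $q$-th power and multiplying by $j^{-\varepsilon}$ gives
\[
\sum_{j=1}^n j^{-\varepsilon}p_j^q\ll p_n^q\, n^{-q(d^- -1)}\sum_{j=1}^n j^{q(d^- -1)-\varepsilon},
\]
and the hypothesis $q(d^- -1)-\varepsilon>-1$ makes the inner sum $\asymp n^{q(d^- -1)-\varepsilon+1}$, which yields the desired $n^{1-\varepsilon}p_n^q$.

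For the second inequality, I would split the range at $j=n/3$. On $1\leqslant j\leqslant n/3$, Lemma \ref{diffpnm} applied with $n$ replaced by $m:=n-j$ and increment $s:=j\leqslant m/2$ gives $|p_n-p_{n-j}|\ll p_{n-j}(j/n)^\theta$; the same estimate shows $p_{n-j}\asymp p_n$ in this range, so $|p_{n-j}/p_n-1|\ll (j/n)^\theta$ and
\[
\sum_{j=1}^{n/3}\frac{1}{j}\left(\frac{j}{n}\right)^{q\theta}=n^{-q\theta}\sum_{j=1}^{n/3}j^{q\theta-1}\ll 1
\]
since $q\theta>0$. On $n/3<j\leqslant n$, substituting $k=n-j$, using $1/(n-k)\ll 1/n$, and the elementary inequality $|a-1|^q\leqslant 2^{q-1}(|a|^q+1)$ reduce the problem to showing that $n^{-1}\sum_{k=0}^{2n/3}(p_k^q/p_n^q+1)\ll 1$. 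The first inequality (applied with $\varepsilon=0$ and $n$ replaced by $2n/3$, together with $p_{2n/3}\asymp p_n$ from Lemma \ref{diffpnm}) bounds $\sum_{k=1}^{2n/3}p_k^q\ll np_n^q$, while the $+1$ contributes $O(1)$ after division by $n$.

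The one delicate point, which I view as the main obstacle, is handling the isolated $k=0$ term $1/(np_n^q)$. The lower half of Lemma \ref{pvbound} gives $p(e^{-1/n})\gg n^{d^-}$, so $p_n\gg n^{d^- -1}$ and $np_n^q\gg n^{1+q(d^- -1)}$; the hypothesis $q(d^- -1)>-1$ (the $\varepsilon=0$ case of the standing assumption) is precisely what forces this exponent to be non-negative, making $1/(np_n^q)=O(1)$ uniformly in $n$. This is also where the sharpness of the condition in the lemma becomes visible, since for $d^-<1$ the quantity $p_n$ tends to zero and the restriction on $q$ cannot be dispensed with.
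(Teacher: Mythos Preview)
Your proof is correct and follows essentially the same route as the paper: the first estimate via the pointwise bound $p_j\ll (j/n)^{d^--1}p_n$ obtained from (\ref{p_n2}), (\ref{plwbound}) and Lemma~\ref{pvbound}, and the second estimate by splitting the sum (the paper uses $n/4$ rather than $n/3$), invoking Lemma~\ref{diffpnm} on the small-$j$ range and the first estimate on the large-$j$ range. Your explicit treatment of the $k=0$ term, showing $1/(np_n^q)=O(1)$ via $p_n\gg n^{d^--1}$ and the hypothesis $q(d^--1)>-1$, is a detail the paper passes over silently but which is indeed needed.
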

\begin{proof} Applying the upper  (\ref{p_n2}) and lower (\ref{plwbound}) bounds for $p_j$ and using the estimate
for ratio $\frac{p(e^{-1/j})}{p(e^{-1/n})}$ provided by Lemma \ref{pvbound} we obtain
\[
\begin{split}
\sum_{j=1}^nj^{-\varepsilon}p_j^q&\leqslant \sum_{j=1}^n\frac{1}{j^{\varepsilon}}\left(ed^+\frac{p(e^{-1/j})}{j}\right)^q
= (ed^+)^q\left(\frac{p(e^{-1/n})}{n}\right)^q
\sum_{j=1}^n\frac{1}{j^{\varepsilon}}\left(\frac{np(e^{-1/j})}{jp(e^{-1/n})}\right)^q
\\
&\ll  \left(\frac{p(e^{-1/n})}{n}\right)^q\sum_{j\leqslant n}
\frac{1}{j^{\varepsilon}}\left(  \frac{j}{n} \right)^{q (d^--1)}
\ll n^{1-\varepsilon} p_n^q.
\end{split}
\]
We will estimate the second sum by splitting it into two parts
$$
\sum_{j=1}^{n} {\frac1j}
       {\left|  \frac{p_{n-j}}{p_{n}} -1 \right|}^q\ll
\sum_{1\leqslant j\leqslant n/4} \frac1j
       {\left|  \frac{p_{n-j}-p_n}{p_{n}}  \right|}^q+
{\frac1n}\sum_{n/4<j\leqslant n }\left(
       {\left| { \frac{p_{n-j}}{p_{n}} } \right|}^q+1\right).
$$
The second sum in the last inequality is $O(1)$ by the just proven
estimate for the sum of  $p_j^q$. While the first sum can be
estimated applying the upper bound  for difference $p_{n-j}-p_n$
provided by Lemma \ref{diffpnm}, which yields
\begin{equation*}
\begin{split}
\sum_{j\leqslant n/4} {\frac1j}
       {\left| { \frac{p_{n-j}-p_{n}}{p_{n}} } \right|}^q&\ll
\sum_{1\leqslant j\leqslant n/4} {\frac1j}
       {\left|
       {\frac jn} \right|}^{q\theta}\ll 1.
\end{split}
\end{equation*}
The lemma is proved.
\end{proof}

\begin{prop}
\label{prop_O(rho)} Consider a sequence of complex numbers
$\hat{f}(1),\hat{f}(2),\ldots,\hat{f}(n),\ldots$, such that
$|\hat{f}(j)|\leqslant 1$ then for any fixed $p>1$ there is a
positive constant $c_3=c_3(p,d^-,d^+)$ such that
\[
\left|\frac{M_n}{p_n}-m(e^{-1/n})\right|\leqslant
c_3\left(\sum_{j=1}^\infty\frac{|\hat{f}(j)-1|^p}{j}\right)^{1/p}\quad\bigl(=c_3\rho(p)\bigr)
\]
for all $n\geqslant 1$.
\end{prop}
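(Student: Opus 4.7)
The plan is to combine Theorem~\ref{meanf} with H\"older's inequality and the elementary asymptotics for the sequence $p_n$ that are collected in Lemmas~\ref{pvbound}--\ref{sumpn}, then handle the passage from the ``truncated exponent'' produced by Theorem~\ref{meanf} to the ``full exponent'' $m(e^{-1/n})$ appearing in the statement.

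First I would apply Theorem~\ref{meanf} to bound $|M_n/p_n-\exp\{\sum_{j=1}^{n}d_j(\hat f(j)-1)/j\}|$ by a sum of two or three weighted absolute-value sums of the form $\sum_{k=1}^{n}|\hat f(k)-1|\,w_k$. To each such sum I would apply H\"older's inequality with conjugate exponents $p,q$, writing
\[
\sum_{k=1}^{n}|\hat f(k)-1|\,w_k
=\sum_{k=1}^{n}\frac{|\hat f(k)-1|}{k^{1/p}}\cdot k^{1/p}w_k
\leqslant \rho(p)\Bigl(\sum_{k=1}^{n}k^{q-1}w_k^{q}\Bigr)^{1/q},
\]
so that everything reduces to checking $\sum_{k=1}^{n}k^{q-1}w_k^{q}=O\bigl((\text{prefactor in front of the sum})^{-q}\bigr)$. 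For the $\tfrac{1}{n}$ and $\tfrac{1}{n^{d^-}}$ terms this is immediate from $\sum_{k\leqslant n}k^{q-1}\asymp n^q$ and $\sum_{k\leqslant n}k^{q(d^-)-1}\asymp n^{qd^-}$ and from a routine handling of the logarithmic factor $(1+\log(n/k))^q$ in the $d^-\geqslant 1$ case. For the term $\tfrac{1}{\sum_{j}p_j}\sum|\hat f(k)-1|p_{n-k}$ (with $\sum_{j=0}^{n}p_j\asymp p(e^{-1/n})$ by the upper bound \eqref{upper_bound_for_sum} and Theorem~\ref{lowcoefth}), I would split at $k=n/2$: for $k\leqslant n/2$ use Lemma~\ref{diffpnm} to get $p_{n-k}\ll p_n$ and bound $\sum_{k\leqslant n/2}k^{q-1}p_n^{q}\asymp n^{q}p_n^{q}\asymp p(e^{-1/n})^q$; for $k>n/2$ use $k^{q-1}\leqslant n^{q-1}$ together with Lemma~\ref{sumpn} to obtain $\sum_{j\leqslant n/2}p_j^{q}\ll n p_n^{q}$, again giving a contribution $\asymp p(e^{-1/n})^{q}$.

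Next I would pass from $\exp\{\sum_{j=1}^{n}d_j(\hat f(j)-1)/j\}$ to $m(e^{-1/n})=\exp\{\sum_{j=1}^{\infty}d_j(\hat f(j)-1)e^{-j/n}/j\}$ by estimating the difference of the exponents, which splits into a head $\sum_{j\leqslant n}d_j(\hat f(j)-1)(1-e^{-j/n})/j$ and a tail $\sum_{j>n}d_j(\hat f(j)-1)e^{-j/n}/j$. Using $1-e^{-j/n}\leqslant j/n$ on the head and $\sum_{j>n}e^{-qj/n}/j\ll 1$ on the tail, one more application of H\"older shows that the full discrepancy between the two exponents is $O(\rho(p))$. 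Finally, since $|\hat f(j)|\leqslant 1$ implies $\Re\bigl(d_j(\hat f(j)-1)/j\bigr)\leqslant 0$, both exponents have non-positive real part, so the segment joining them does too; integrating $e^{z}$ along it gives $|e^{a}-e^{b}|\leqslant |a-b|$, turning the exponent bound into a bound on the quantity we actually want. The triangle inequality then assembles the three pieces into $|M_n/p_n-m(e^{-1/n})|\leqslant c_3\rho(p)$.

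The main obstacle is the bound on $\sum_{k=1}^{n}k^{q-1}p_{n-k}^{q}$ that appears in the first weighted sum: it depends on Lemma~\ref{sumpn}, whose hypothesis $q(d^--1)>-1$ forces $p>1/d^-$ when $d^-<1$. So the estimate goes through for all $p>1$ when $d^-\geqslant 1$, while for $d^-<1$ it requires $p>1/d^-$; one should either read the proposition as implicitly carrying this restriction (consistent with the hypothesis $p>\max\{2,1/d^-\}$ in Theorem~\ref{clt1}) or else combine the above estimate with the trivial bound $|M_n/p_n-m(e^{-1/n})|\leqslant 2$ (using $|M_n|\leqslant p_n$ and $|m(e^{-1/n})|\leqslant 1$) to absorb the range where $\rho(p)$ is not small.
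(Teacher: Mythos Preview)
Your argument is correct and lands on the same restriction $p>\max\{1,1/d^-\}$ that the paper's own proof carries (the paper also invokes Lemma~\ref{sumpn} for $\sum p_j^q$, which needs $q(d^--1)>-1$). The route, however, is longer than necessary. The paper does not pass through Theorem~\ref{meanf} at all: it bounds $S(m;n)$ directly by a single H\"older step,
\[
|S(m;n)|\leqslant d^+\Bigl(\sum_{j=1}^np_{n-j}^q\Bigr)^{1/q}\Bigl(\sum_{j=1}^n|\hat f(j)-1|^p\Bigr)^{1/p}\ll np_n\,\rho(p),
\]
and then plugs this uniform bound into the inequality of Theorem~\ref{fundthm1} with $g=m$. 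Since Theorem~\ref{fundthm1} already produces $g(e^{-1/n})=m(e^{-1/n})$ on the left-hand side, there is no ``truncated-versus-full exponent'' discrepancy to repair; your careful treatment of that passage is effectively undoing the last step in the proof of Theorem~\ref{meanf}. So your strategy works, but the paper's one-line estimate of $S(m;n)$ followed by an appeal to Theorem~\ref{fundthm1} is considerably shorter and avoids the term-by-term H\"older analysis and the $|e^a-e^b|\leqslant|a-b|$ argument.

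One small correction: your proposed second fix for the range $1<p\leqslant 1/d^-$ does not actually work. The trivial bound $|M_n/p_n-m(e^{-1/n})|\leqslant 2$ only helps when $\rho(p)$ is bounded away from zero, but the obstruction is structural (it is about $p$ relative to $d^-$, not about the size of $\rho(p)$), so for small $\rho(p)$ in that range neither argument applies. Your first reading---that the proposition implicitly carries the hypothesis $p>\max\{1,1/d^-\}$, as in Proposition~\ref{prop_O(rho_exp)} and Theorem~\ref{clt1}---is the right one, and it is exactly what the paper's proof delivers as well.
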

\begin{proof} If the series on the right hand side of the inequality of
our proposition diverges, then the proposition becomes trivial.
Therefore let us assume that this series is convergent. Applying the
estimate of sums of $p_j^q$ of Lemma \ref{sumpn} we obtain
inequality
\[
\begin{split}
|S(m;n)|&\leqslant\sum_{j=1}^n\hat{d}(j)\bigl|\hat
f(j)-1\bigr|p_{n-j}\leqslant  d^+
\left(\sum_{j=1}^np_{n-j}^q\right)^{1/q}\left(\sum_{j=1}^n|\hat{f}(j)-1|^p\right)^{1/p}
\\
&\ll np_n\left(\sum_{j=1}^n\frac{|\hat{f}(j)-1|^p}{n}\right)^{1/p}
\leqslant np_n\left(\sum_{j=1}^\infty\frac{|\hat{f}(j)-1|^p}{j}\right)^{1/p}
\end{split}
\]
Applying this inequality to estimate the right hand side of inequality Theorem \ref{fundthm1} for difference $\frac{M_n}{p_n}-m(e^{-1/n})$, we complete the proof of the Proposition.
\end{proof}
Unfortunately the estimate of the just proven proposition is not
strong enough for our purpose as we will need an estimate like
$O\bigl(\rho(p)m(e^{-1/n})\bigr)$ in order to analyze characteristic
functions of additive functions.
\begin{prop}
\label{prop_O(rho_exp)} For any fixed $\infty \geqslant p>\max
\left\{ 1, 1/d^- \right\}$, there exists such a positive
$\delta=\delta (d^-,d^+,p)$ that if $\rho=\rho(p) \leqslant \delta$,
then
\[
\frac{M_n}{p_n}=m(e^{-1/n})\bigl(1+O(\rho)\bigr).
\]
\end{prop}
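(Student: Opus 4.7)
The plan is to prove the proposition by a bootstrap induction on $n$ that strengthens Proposition \ref{prop_O(rho)} from an absolute error $O(\rho)$ to a relative error $O(\rho)|m(e^{-1/n})|$. First, renormalize: set $\tilde m(z) := m(z)/m(e^{-1/n})$ and $\tilde M(z) := M(z)/m(e^{-1/n}) = p(z)\tilde m(z)$. Since $\tilde m(e^{-1/n}) = 1$, the target is equivalent to $\bigl|\tilde M_n/p_n - 1\bigr| \ll \rho$. Applying Theorem \ref{fundthm1} with $g = \tilde m$ reduces everything to estimating
\[
S(\tilde m; j) \;=\; \frac{1}{m(e^{-1/n})} \sum_{k=1}^j d_k(\hat f(k)-1)\, M_{j-k}.
\]

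The heart of the argument is the inductive hypothesis $|M_j| \leqslant 2 p_j |m(e^{-1/j})|$ for $j < n$, which is exactly the content of the proposition applied one level down (with constant $2$ chosen so the bootstrap can close). Combined with the key ratio estimate $|m(e^{-1/(j-k)})/m(e^{-1/n})| \leqslant e^{d^+\rho}(n/(j-k))^{d^+\rho}$ from Lemma \ref{diffLmn} (valid since $j-k \leqslant n$), Hölder's inequality with exponents $p$ and $q = p/(p-1)$ gives
\[
|S(\tilde m;j)| \;\ll\; \rho\, \left(\sum_k k^{q-1} p_{j-k}^q\, (n/(j-k))^{q\eta}\right)^{1/q}, \qquad \eta := d^+\rho.
\]
Invoking Lemma \ref{sumpn} with parameter $q\eta$ (valid for small $\eta$ thanks to the hypothesis $p > 1/d^-$) yields the clean bound $|S(\tilde m;j)| \ll \rho\, j^{1-\eta} n^\eta p_j$ for $j \leqslant n$. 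Substituting into the main term of Theorem \ref{fundthm1},
\[
\frac{1}{n^\theta}\sum_{j=1}^n \frac{|S(\tilde m;j)|}{p_j}\,j^{\theta-2} \;\ll\; \frac{\rho\, n^\eta}{n^\theta}\sum_{j=1}^n j^{\theta-1-\eta} \;\ll\; \frac{\rho}{\theta-\eta},
\]
which is $O(\rho)$ once $\delta$ is chosen with $d^+\delta < \theta/2$. The base case $n \leqslant n_0$ follows directly from Proposition \ref{prop_O(rho)} because Lemma \ref{diffLmn} gives $|m(e^{-1/n})| \geqslant \exp(-d^+\delta(1+\log n_0))$, bounded below.

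The main obstacle I foresee is the tail contribution $\sum_{j>n}$ in Theorem \ref{fundthm1}: for $j > n$ some indices $j-k \geqslant n$ fall outside the inductive hypothesis, and the trivial bound $|M_{j-k}| \leqslant p_{j-k}$ introduces a disastrous factor $1/|m(e^{-1/n})|$. I would handle this by splitting the sum at $j-k = n$: on $j-k < n$ use the inductive bound as above; on $j-k \geqslant n$ combine Proposition \ref{prop_O(rho)} with the monotonicity $|m(e^{-1/\ell})| \leqslant |m(e^{-1/n})|$ for $\ell \geqslant n$ (which holds because $\Re L(e^{-1/\ell})$ decreases with $\ell$, each summand $d_j\Re(\hat f(j)-1)/j$ being non-positive and weighted by the increasing factor $e^{-j/\ell}$). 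This forces $|M_\ell| \leqslant p_\ell(|m(e^{-1/n})| + c_3\rho)$, and the exponential decay $e^{-j/n}$ together with Lemma \ref{intbounds} absorbs the remaining factors, yielding an $O(\rho)$ tail bound. Putting everything together gives $|\tilde M_n/p_n - 1| \leqslant C_1 \rho$ with $C_1 = C_1(d^-,d^+,p)$, and choosing $\delta$ small enough to ensure $1 + C_1 \delta \leqslant 2$ closes the bootstrap, completing the induction.
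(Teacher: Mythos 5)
Your overall strategy --- a self-improving bound with target constant $2$, H\"older with exponents $p,q$, the ratio estimate $\bigl|m(e^{-1/j})/m(e^{-1/n})\bigr|\leqslant e^{d^+\rho}(n/(j\vee 1))^{d^+\rho}$ from Lemma \ref{diffLmn}, and Lemma \ref{sumpn} with the perturbed exponent --- is exactly the paper's; your main-term computation $\frac{1}{n^\theta}\sum_{j\leqslant n}|S|p_j^{-1}j^{\theta-2}\ll\rho/(\theta-\eta)$ matches the paper's line for line. The difference is organizational: you run an induction on $n$, while the paper first truncates the sequence (assumes $\hat f(j)=1$ for all large $j$, so that $D:=\sup_{k\geqslant 0}|M_k/(p_km(e^{-1/k}))|$ is finite a priori), proves the self-improving inequality $D\leqslant 1+C_2D\rho$ for this \emph{global} supremum, concludes $D\leqslant 2$, and finally removes the truncation by letting the cutoff tend to infinity.

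This difference is not cosmetic: your version has a genuine gap exactly where you flagged the ``main obstacle,'' and the patch you propose does not close it. In the tail $\sum_{j>n}$ of Theorem \ref{fundthm1} the quantity $S(\tilde m;j)$ involves $M_\ell$ with $\ell=j-k\geqslant n$, outside your inductive hypothesis. Your fix bounds these by $|M_\ell|\leqslant p_\ell\bigl(|m(e^{-1/n})|+c_3\rho\bigr)$ via Proposition \ref{prop_O(rho)} and monotonicity; after dividing by $m(e^{-1/n})$ and summing, the additive $c_3\rho$ piece contributes an error of order $\rho^2/|m(e^{-1/n})|$ to $|\tilde M_n/p_n-1|$. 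But Lemma \ref{diffLmn} only gives $|m(e^{-1/n})|\geqslant\exp\{-d^+\rho(1+\log n)\}$, and this lower bound is essentially attained (take $|\hat f(j)-1|\asymp 1/\log j$, for which $\rho(p)<\infty$ for $p>1$ while $\sum_j|\hat f(j)-1|/j=\infty$, so $|m(e^{-1/n})|\to 0$). Hence $\rho^2/|m(e^{-1/n})|$ is unbounded in $n$ for fixed $\rho>0$ and is not $O(\rho)$; the induction does not close. The remedy is precisely the paper's device: make the multiplicative bound $|M_\ell|\leqslant Dp_\ell|m(e^{-1/\ell})|$ available at \emph{every} index $\ell$, including $\ell\geqslant n$, by defining $D$ as a supremum over all indices rather than inducting; the tail is then handled by $|m(e^{-1/j})|\leqslant|m(e^{-1/n})|$ for $j\geqslant n$ and $\sum_{j>n}p_je^{-j/n}\leqslant p(e^{-1/n})$, and finiteness of $D$ (which your induction was implicitly trying to manufacture) is obtained for free from the truncation, which is harmless since none of the constants depend on the cutoff.
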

\begin{proof}
At first let us prove that $M_n=O(p_n|m(e^{-1/n})|)$ for all
$n\geqslant 0$ (here and in what follows we assume that
$e^{-1/0}=0$). Let us assume that only finite number of $\hat{f}(j)$
are not equal to $1$. Then the supremum $D$ of ratios
\[
D:=\sup_{n\geqslant 0}\left|\frac{M_n}{p_nm(e^{-1/n})}\right|
\]
will be finite. We will prove that $D\leqslant 2$ if
$\rho(p)\leqslant \delta$, with some absolute, sufficiently small
constant $\delta>0$. Let us use inequality $|M_k|\leqslant D
p_k|m(e^{-1/k})|$ to estimate the right hand side of the identity
(\ref{expression_for_S(m,n)}) for $S(m;n)$ as
\[
\begin{split}
|S(m;n)|&\leqslant Dd^+\sum_{j=1}^n\bigl|\hat
f(j)-1\bigr|p_{n-j}\bigl|m(e^{-1/{(n-j)}})\bigr|
\\
&\leqslant Dd^+n\left(\frac{1}{n}\sum_{j=1}^n\bigl|\hat
f(j)-1\bigr|^p\right)^{1/p}\left(\frac{1}{n}\sum_{j=0}^{n-1}p_{j}^q\bigl|m(e^{-1/j})\bigr|^q\right)^{1/q}
\\
&\leqslant Dd^+n\rho\bigl|m(e^{-1/n})\bigr|
\left(\frac{1}{n}\sum_{j=0}^{n-1}p_{j}^q\left|\frac{m(e^{-1/j})}{m(e^{-1/n})}\right|^q\right)^{1/q},
\end{split}
\]
for $n\geqslant 1$. Since $m(e^{-1/k})=e^{L(e^{-1/k})}$ we can apply
Lemma \ref{diffLmn} to estimate the ratio
\begin{equation}
\label{upper_bound_ratio}
\begin{split}
\left|\frac{m(e^{-1/j})}{m(e^{-1/n})}\right|&\leqslant
\exp\bigl\{|L(e^{-1/n})-L(e^{-1/j})|\bigr\}
\\
&\leqslant \exp\left\{ d^+ \rho(p)
\left(1+\log\frac{n}{j}\right)\right\}=e^{d^+\rho(p)
}\Bigl(\frac{n}{j\vee 1}\Bigr)^{d^+ \rho(p)},
\end{split}
\end{equation}
for $n\geqslant j\geqslant 0$, where we use the notation $a\vee
b=\max\{a,b\}$. From now on let us assume that $\delta$ is small
enough  that $q(d^--1)-qd^+\delta>-1$, which is necessary to ensure
the validity of the upper bound of Lemma \ref{sumpn} for the partial
sum of $p^q_jj^{-qd^+\rho(p)}$. This allows us to further evaluate
$S(m;n)$ as
\[
\begin{split}
|S(m;n)|&\leqslant Dd^+e^{\rho d^+}n\rho\bigl|m(e^{-1/n})\bigr|
\left(\frac{1}{n}\sum_{j=0}^{n-1}p_{j}^q\Bigl(\frac{n}{j\vee
1}\Bigr)^{qd^+ \rho(p)}\right)^{1/q}
\\
&\leqslant C_1D\rho n p_n\bigl|m(e^{-1/n})\bigr|,
\end{split}
\]
 where
$C_1=C_1(d^+,d^-,p)$ is a positive constant. Plugging this estimate into
the right hand side of the inequality of Theorem \ref{fundthm1}
 we obtain
\begin{equation*}
\begin{split} \left|\frac{M_n}{p_n}-m(e^{-1/n})\right|&
\ll \frac{D\rho}{n^{\theta}}\sum_{j=1}^n
\bigl|m(e^{-1/j})\bigr|j^{\theta-1}
+\frac{D\rho}{p(e^{-1/n})}\sum_{j>n}\bigl|m(e^{-1/j})\bigr|p_je^{-j/n}
\\
&\quad+\rho D\bigl|m(e^{-1/n})\bigr|,
\end{split}
\end{equation*}
and once again utilizing the upper bound (\ref{upper_bound_ratio})
for ratio $\left|\frac{m(e^{-1/j})}{m(e^{-1/n})}\right|$ and
noticing that $\bigl|m(e^{-1/j})\bigr|$ is monotonously decreasing
as $j$ increases  we finally get
\begin{equation}
\label{eq_proof} \left|\frac{M_n}{p_n}-m(e^{-1/n})\right|\leqslant
C_2D\rho(p) \bigl|m(e^{-1/n})\bigr|,
\end{equation}
if $\delta d^+<\theta$, where again $C_2$ is a positive constant
that depends on $d^+$, $d^-$ and $p$ only. Dividing both sides of
this inequality by $\bigl|m(e^{-1/n})\bigr|$, taking maximum for all
$n\geqslant 0$ and recalling the definition of $D$ we conclude that
this quantity satisfies inequality
\[
D\leqslant 1+C_2D\rho(p).
\]
Thus if we require $\delta$ to be fixed and small enough to ensure
that $\rho(p) \leqslant \delta \leqslant 1/(2C_2)$, then $D$ would
be bounded $D\leqslant 2$. Thus if $\delta$ is fixed such that
$\delta
<\min\left\{\frac{q(d^--1)+1}{qd^+},\frac{\theta}{d^+},\frac{1}{2C_2}\right\}$
then the estimate  of  the proposition will follow from the inequality
(\ref{eq_proof}) and the fact that $D$ is bounded for such $\delta$.
Note that at the beginning of the proof we assumed that only a finite
number of $\hat{f}(j)$  are not equal to $1$. However this condition
was only needed to ensure that quantity $D$ is finite, all the
constants in symbols $O(\ldots)$ and $\ll$ do not depend on the
number of $\hat{f}(j)$ that are not equal to $1$. Thus if we have an
infinite sequence $\hat{f}(j)$ such that $\rho(p)\leqslant \delta$
then we can consider a modified sequence that is obtained by putting
$\hat{f}(j)=1$ for $j\geqslant N$  and allow $N\to \infty$.

\end{proof}

The following theorem has been proved for $d_j\equiv 1$ by
Manstavi\v cius \cite{manst_additive_1996}, later generalized for
$d_j\equiv \theta >0$ in \cite{zakh_palanga_2001}.
\begin{thm}
\label{meanM1} For any fixed $\infty \geqslant p>\max \left\{ 1,
1/d^- \right\}$, there exists such a positive $\delta=\delta
(d^-,d^+,p)$ that if $\rho \leqslant \delta$, then

\begin{equation}
\label{manst_general} \frac{M_N}{p_N}=\exp \{  L_N(1) \} \left(
1+ \sum_{j=1}^Nd_j{\frac{\hat f(j)-1}j} \left( {\frac{p_{N-j}}{p_{N}}}-1 \right) +O(\rho^2)\right).
\end{equation}

\end{thm}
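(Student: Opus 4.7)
The plan is to perform a second-order Taylor expansion of $m(z)=\exp L_N(z)$ around $z=1$ in the exponent. Since $\hat f(j)$ for $j>N$ does not affect $M_N$, I assume without loss of generality that $\hat f(j)=1$ for $j>N$, so $L(z)=L_N(z)$ is a polynomial. Setting $\tilde L(z):=L_N(z)-L_N(1)$ and $R(z):=\sum_{k\geqslant 2}\tilde L(z)^k/k!$, I decompose
\[
m(z)=e^{L_N(1)}\bigl(1+\tilde L(z)+R(z)\bigr).
\]
Since $[z^N]p(z)L_N(z)=\sum_{j=1}^N d_j(\hat f(j)-1)p_{N-j}/j$ and $[z^N]p(z)L_N(1)=L_N(1)p_N$, the linear-in-$\tilde L$ contribution to $M_N/p_N$ is
\[
\frac{e^{L_N(1)}[z^N]p(z)(1+\tilde L(z))}{p_N}=e^{L_N(1)}\left(1+\sum_{j=1}^N d_j\frac{\hat f(j)-1}{j}\Bigl(\frac{p_{N-j}}{p_N}-1\Bigr)\right),
\]
which reproduces exactly the asserted main term. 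The theorem therefore reduces to proving
\[
\mathcal{E}_N:=\frac{1}{p_N}\,[z^N]p(z)R(z)=O(\rho^2),
\]
uniformly in $N$, for $\rho=\rho(p)\leqslant\delta$ with $\delta=\delta(d^-,d^+,p)$ sufficiently small.

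Expand $\tilde L(z)^k$ as a $k$-fold sum over $(j_1,\dots,j_k)$ with coefficients $\prod_i\bigl(-d_{j_i}(\hat f(j_i)-1)/j_i\bigr)$, and use
$\prod_{i=1}^k(1-z^{j_i})=\sum_{S\subseteq\{1,\dots,k\}}(-1)^{|S|}z^{\sum_{i\in S}j_i}$, so that $[z^N]p(z)\prod_i(1-z^{j_i})$ becomes a signed alternating sum of values $p_{N-\sum_{i\in S}j_i}$ (with the convention $p_m=0$ for $m<0$): a genuine $k$-th order iterated difference of $p_m$ in the variables $j_1,\dots,j_k$. For $k=2$ the decisive quantity is the mixed second difference $\Delta^2(j_1,j_2):=p_{N-j_1-j_2}-p_{N-j_1}-p_{N-j_2}+p_N$. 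Writing $(1-z^{j_1})(1-z^{j_2})=(1-z)^2\bigl(\sum_{0\le a<j_1}z^a\bigr)\bigl(\sum_{0\le b<j_2}z^b\bigr)$ and expressing $\Delta^2(j_1,j_2)$ as a Cauchy integral of $p(z)(1-z)^2(\cdots)z^{-N-1}$ on the Flajolet--Odlyzko-type contour already used in the proof of Lemma \ref{diffcmn} should yield a bound of the form $|\Delta^2(j_1,j_2)|\ll p_N\,(j_1 j_2/N^2)^\theta$ in the main regime $j_1,j_2\leqslant N/2$, together with simpler boundary bounds outside it. Multiplying by $d_{j_1}d_{j_2}|\hat f(j_1)-1||\hat f(j_2)-1|/(j_1 j_2)$, summing, and applying H\"older's inequality with exponents $(p,q)$ exactly as in the proof of Proposition \ref{prop_O(rho_exp)} (with Lemma \ref{sumpn} controlling the $q$-th power sums of $p_m$) gives the desired $O(\rho^2)$ bound for the $k=2$ contribution.

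For the tail $k\geqslant 3$ I would combine the pointwise estimate $|\tilde L(z)|\ll\rho(1+\log N)$ on $|z|\leqslant 1$ from Lemma \ref{diffLmn} with the inequality $\sum_{k\geqslant 3}|\tilde L(z)|^k/k!\leqslant|\tilde L(z)|^3 e^{|\tilde L(z)|}$; repeating the multiple-sum argument for $k=3$ and absorbing a geometric factor of the form $(C\rho\log N)^{k-3}$ into the constant (which is permitted once $\delta$ is taken small enough) shows that all higher-$k$ contributions are $O(\rho^3)$ and hence negligible compared with the $k=2$ term. The main obstacle is precisely the $k=2$ estimate: upgrading Lemma \ref{diffpnm}, which gives only a first-order H\"older-type smoothness bound for $p_m$, into the genuine second-order cancellation $|\Delta^2(j_1,j_2)|\ll p_N(j_1 j_2/N^2)^\theta$ requires either a careful Cauchy-integral argument along a Hankel-type contour or a telescoping identity exploiting the recurrence (\ref{p_n}) for $p_n$; in addition, the subsequent H\"older step must be arranged to be compatible with the condition $p>\max\{1,1/d^-\}$ through the $q$-th power sum bounds of Lemma \ref{sumpn}.
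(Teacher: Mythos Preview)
Your approach is genuinely different from the paper's, and as written it has two real gaps.

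The paper does not expand $m(z)$ in powers of $\tilde L$; instead it applies Theorem~\ref{fundthm1} a second time, now with $g(z)=u_N(z):=m(z)-m(e^{-1/N})L_N(z)$. The key observation is that
\[
S(u_N;n)=\sum_{j\leqslant n}d_j\bigl(\hat f(j)-1\bigr)\bigl(M_{n-j}-m(e^{-1/N})p_{n-j}\bigr),
\]
and by Proposition~\ref{prop_O(rho_exp)} the factor $M_{n-j}-m(e^{-1/N})p_{n-j}$ already carries one power of $\rho$ (plus a term $|m(e^{-1/(n-j)})-m(e^{-1/N})|$, itself $O(\rho)$ by Lemma~\ref{diffLmn}). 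H\"older together with Lemma~\ref{sumpn} then yields $|S(u_N;n)|\ll\rho^2\,np_n|m(e^{-1/N})|\bigl((N/n)^{d^+\rho}+(n/N)^{d^+\rho}\bigr)$, and inserting this into \eqref{fund_ineq} gives the theorem directly. No second-order smoothness of $p_n$ is ever used: the extra power of $\rho$ comes from \emph{bootstrapping} the first-order result inside $S(u_N;n)$, not from iterated differences of $p_m$.

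Your route, by contrast, rests on two steps that do not go through as you describe them. First, the bound $|\Delta^2(j_1,j_2)|\ll p_N(j_1 j_2/N^2)^\theta$ cannot be obtained from a Flajolet--Odlyzko contour: that contour leaves the unit disc, and the hypotheses give no analytic continuation of $p(z)$ beyond $|z|<1$ --- this is exactly the obstruction the paper's whole Tauberian machinery is built to circumvent. A second-order analogue of Lemma~\ref{diffpnm} might be provable through the decomposition $p_n=\sum_k\tilde p_k\binom{n-k+\theta-1}{n-k}$, but that is substantial new work you have not carried out, and the contour method you actually propose fails. Second, your tail argument for $k\geqslant 3$ asks to absorb factors $(C\rho\log N)^{k-3}$ into the constant ``once $\delta$ is taken small enough''; but $\delta$ must depend only on $d^-,d^+,p$, while $\rho\log N$ is unbounded in $N$ for any fixed $\rho>0$, so this absorption is impossible. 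The paper's bootstrap avoids both problems: it never needs $p(z)$ outside the disc, and it never produces a $\log N$ factor, because the second power of $\rho$ is extracted from Proposition~\ref{prop_O(rho_exp)} rather than from higher-order differences.
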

\begin{proof} The values of $\hat{f}(j)$ with $j>N$ do not influence
the value of $M_n$ therefore  we will   assume that $\hat{f}(j)=1$
for all $j>N$. Let us consider $S(g;n)$ with
$g(z)=u_N(z)=m(z)-m(e^{-1/N}) \sum_{j=1}^\infty d_j{\frac{\hat
f(j)-1}j}z^j$ instead of $g(z)=m(z)$ then
\[
\begin{split}
S(u_N;n)&=[z^{n-1}]p(z)u_N'(z)
\\
&=S(m;n)-[z^{n-1}]m(e^{-1/N})p(z) \sum_{j=1}^\infty d_j{\bigl(\hat
f(j)-1\bigr)}z^{j-1}
\\
&=\sum_{j=1}^nd_j\bigl(\hat
f(j)-1\bigr)\bigl(M_{n-j}-m(e^{-1/N})p_{n-j}\bigr)
\end{split}
\]
Applying here the estimate
$M_{n-j}=p_{n-j}m(e^{1/(n-j)})\bigl(1+O(\rho)\bigr)$ of Proposition
\ref{prop_O(rho_exp)} after some evaluations we get
\[
\begin{split}
S(u_N;n)&\ll\sum_{j=1}^n\bigl|\hat
f(j)-1\bigr|\bigl|m(e^{-1/{(n-j)}})-m(e^{-1/N})\bigr|p_{n-j}
\\
&\quad+ \rho\sum_{j=1}^n\bigl|\hat
f(j)-1\bigr|p_{n-j}\bigl|m(e^{-1/{(n-j)}})\bigr|
\end{split}
\]
The second sum of the above estimate has already been already shown
to be $O\left(\rho^2np_n\bigl|m(e^{-1/n})\bigr|\right)$ in the proof
of Proposition \ref{prop_O(rho_exp)}. Therefore
\[
\begin{split}
S(u_N;n)&\ll
n\rho\left(\frac{1}{n}\sum_{j=0}^{n-1}\bigl|m(e^{-1/{j}})-m(e^{-1/N})\bigr|^qp_{j}^q\right)^{1/q}
+ \rho^2np_n\bigl|m(e^{-1/n})\bigr|
\\
&\ll n\rho^2\bigl|m(e^{-1/N})\bigr|\left(\frac{1}{n}\sum_{j=0}^{n-1}
\exp\left\{ qd^+\rho\left|\log\frac{N}{j\vee
1}\right|\right\}p_{j}^q\right)^{1/q}
+\rho^2np_n\bigl|m(e^{-1/n})\bigr|
\end{split}
\]
Since $e^{\beta|\log x|}\leqslant x^\beta+x^{-\beta}$ if $\beta>0$,
we can further estimate
\[
\begin{split}
S(u_N;n)&\ll n\rho^2\left(\frac{1}{n}\sum_{j=0}^{n-1}
p_{j}^q\left(\Bigl(\frac{N}{j\vee
1}\Bigr)^{qd^+\rho}+\Bigl(\frac{j}{N}\Bigr)^{qd^+\rho}\right)\right)^{1/q}
+\rho^2np_n\bigl|m(e^{-1/n})\bigr|
\\
&\ll n\rho^2p_n\bigl|m(e^{-1/N})\bigr|
\left(\Bigl(\frac{N}{n}\Bigr)^{d^+\rho}+\Bigl(\frac{n}{N}\Bigr)^{d^+\rho}\right)+
\rho^2np_n\bigl|m(e^{-1/n})\bigr|.
\end{split}
\]
After plugging this estimate into the inequality (\ref{fund_ineq})
for Voronoi mean
 we end up with an estimate
\[
\frac{1}{p_N}[z^N]p(z)u_N(z)-u_N(e^{-1/N})\ll\rho^2\bigl|m(e^{-1/N})\bigr|,
\]
which after recalling the definition of $u_N$ becomes
\[
\frac{M_N}{p_N}=m(e^{-1/N})\left(1+\sum_{j=1}^Nd_j\frac{\bigl(\hat
f(j)-1\bigr)}{j}\left(\frac{p_{N-j}}{p_N}-e^{-j/N}\right)+O(\rho^2)\right).
\]
Applying here estimate
\[
m(e^{-1/N})=\exp\{{L_N(e^{-1/N})}\}=\exp\{L_N(1)\}\bigl(1+(L_N(e^{-1/N})-L_N(1))+O(\rho^2)\bigr)
\]
we complete the proof of the theorem.
\end{proof}

For $u>0$ we define
$$
E(u):=\exp \left\{ 2\sum_{{\scriptstyle k=1}\atop {\scriptstyle
|\hat f(k)-1|>u}}^n {\frac{|\hat f(k)-1|}k} \right\}.
$$
\begin{thm}
\label{thmeanM1}
There exists such a constant $\eta =\eta
(d^-,d^+)$ that for any $u\leqslant \eta$ we have
$$
\left| {{M_n}{p_n}^{-1}} \right|\ll \left| \exp \left\{  L_n(1)
\right\} \right| {\bigl( E(u) \bigr)}^{d^+ } ,
$$
\end{thm}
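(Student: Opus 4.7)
The plan is to split the multiplicative function $f$ according to the size of $|\hat f(k)-1|$ and apply Proposition~\ref{prop_O(rho_exp)} to the ``good'' part. Precisely, set $G=\{k:|\hat f(k)-1|\leqslant u\}$, $B=\{k\leqslant n:|\hat f(k)-1|>u\}$, and decompose $\hat f(k)=\hat f_G(k)\hat f_B(k)$ with $\hat f_G(k)=\hat f(k)$ on $G$, $\hat f_G(k)=1$ on $B$, and $\hat f_B$ defined symmetrically; then $m(z)=m_G(z)m_B(z)$ and $M(z)=M_G(z)m_B(z)$, where $M_G(z)=p(z)m_G(z)$. Take $\eta:=\delta(d^-,d^+,\infty)$ as provided by Proposition~\ref{prop_O(rho_exp)}. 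Since $\rho_{f_G}(\infty)=\max_k|\hat f_G(k)-1|\leqslant u\leqslant\eta$, that Proposition applied to $f_G$ yields $|M_{G,j}|\ll p_j|m_G(e^{-1/j})|$ uniformly in $j$, while the triangle inequality for exponential generating functions gives $|(m_B)_k|\leqslant\mu_k$, where the positive-coefficient series
\[
\mu(z):=\exp\Bigl\{\sum_{k\in B}d_k|\hat f(k)-1|z^k/k\Bigr\}=\sum_{k\geqslant 0}\mu_kz^k
\]
has logarithmic-derivative coefficients bounded by $2d^+$.

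Convolving via $M_n=\sum_{k=0}^n M_{G,n-k}(m_B)_k$ reduces the problem to estimating
\[
\sum_{k=0}^n p_{n-k}|m_G(e^{-1/(n-k)})|\mu_k
\]
by a constant multiple of $p_n|m_G(e^{-1/n})|\mu(e^{-1/n})$. I split this sum at $k=n/2$: for $k\leqslant n/2$ the ratio $|m_G(e^{-1/(n-k)})|/|m_G(e^{-1/n})|$ is $O(1)$ by Lemma~\ref{diffLmn} applied with $\rho_{f_G}(\infty)\leqslant u$, so the partial sum is dominated by $[z^n]p(z)\mu(z)$, itself $\ll p_n\mu(e^{-1/n})$ via the upper bound (\ref{p_n2}) applied to $p\mu$ (whose log-derivative coefficients are $\leqslant 3d^+$) combined with (\ref{plwbound}). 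For $k>n/2$ I would invoke the explicit bound $|m_G(e^{-1/(n-k)})|\leqslant C|m_G(e^{-1/n})|\bigl(n/((n-k)\vee 1)\bigr)^{ud^+}$ again from Lemma~\ref{diffLmn}, and exploit the smallness of $p_{n-k}\mu_k$ in that range (with $\mu_k\ll\mu(e^{-1/n})/n$ uniformly there): the residual series $\sum_{j\geqslant 1}j^{-ud^+-1}$ converges, and the tail is absorbed into $p_n|m_G(e^{-1/n})|\mu(e^{-1/n})$ provided $\eta$ is so small that $ud^+<d^-$, since then $p_n|m_G(e^{-1/n})|\gg n^{d^--1-ud^+}$ beats the $n^{ud^+}$ blow-up coming from the ratio bound.

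It remains to translate $|m_G(e^{-1/n})|\mu(e^{-1/n})$ into the stated form $|\exp L_n(1)|E(u)^{d^+}$, which I do by taking real parts of the exponents. The elementary estimate $\sum_{k\geqslant 1}(1-e^{-k/n})/k\leqslant 1$ (from $1-e^{-x}\leqslant x$) controls the ``good-part'' contribution, while $\sum_{j>n}e^{-j/n}/j=O(1)$ handles the $\mu$-tail beyond $n$; together they give
\[
|m_G(e^{-1/n})|\ll|\exp L_n(1)|\,E(u)^{d^+/2}\quad\text{and}\quad\mu(e^{-1/n})\ll E(u)^{d^+/2}.
\]
Multiplying these bounds finishes the proof. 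The hard part will be the convolution estimate near $k=n$, where $|m_G(e^{-1/(n-k)})|$ fails to decay like $|m_G(e^{-1/n})|$; the polynomial blow-up $n^{ud^+}$ is exactly what forces $\eta=\eta(d^-,d^+)$ to be chosen small, so that it is dominated by the lower bound on $p_n|m_G(e^{-1/n})|$.
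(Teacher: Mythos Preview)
Your strategy is exactly the paper's: truncate $\hat f$ at level $u$, apply Proposition~\ref{prop_O(rho_exp)} with $p=\infty$ to the ``good'' factor $M_G$, majorise the ``bad'' factor, convolve, split the convolution according to which index is near $n$, and finally pass from $|m_G(e^{-1/n})|\mu(e^{-1/n})$ to $|\exp L_n(1)|\,E(u)^{d^+}$. The paper uses the crude bound $|m_j^{(u)}|\leqslant \frac{2d^+}{j}E(u)^{d^+/2}$ obtained directly from the recurrence instead of your majorant $\mu(z)$, and indexes the split by the good-part variable rather than the bad-part one, but these are cosmetic differences.

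There is one step in your sketch that does not go through as written. In the range $k>n/2$ (equivalently $j=n-k<n/2$) you have to bound
\[
\frac{\mu(e^{-1/n})}{n}\,|m_G(e^{-1/n})|\sum_{j<n/2} p_j\Bigl(\frac{n}{j\vee 1}\Bigr)^{ud^+}.
\]
The ``residual series $\sum_{j\geqslant 1}j^{-ud^+-1}$'' would only appear if $p_j\ll 1/j$, which is false in general: one has $p_j\ll p(e^{-1/j})/j$ with $p(e^{-1/j})\to\infty$, so that series is not the right object and the argument you wrote does not close. The correct (and equally short) route is to invoke Lemma~\ref{sumpn} with $q=1$ and $\varepsilon=ud^+$: your condition $ud^+<d^-$ is precisely its hypothesis $(d^--1)-\varepsilon>-1$, and its conclusion gives $\sum_{j\leqslant n}p_j\,j^{-ud^+}\ll n^{1-ud^+}p_n$, whence the displayed sum is $\ll p_n|m_G(e^{-1/n})|\mu(e^{-1/n})$ after multiplying by $n^{ud^+-1}$. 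This is exactly how the paper handles the same range. Note also that the comparison ``$p_n|m_G(e^{-1/n})|\gg n^{d^--1-ud^+}$ beats the $n^{ud^+}$ blow-up'' is not the right bookkeeping, since $|m_G(e^{-1/n})|$ occurs on both sides and cancels; what is actually needed is just $\sum_{j\leqslant n}p_j\,j^{-ud^+}\ll n^{1-ud^+}p_n$.
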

\begin{proof} Given a sequence of complex numbers $\hat{f}(j)$, $j\geqslant
1$ and a positive number  $u>0$ we can construct a new sequence
$\hat f_u(j)$ defined as
$$
\hat f_u(j)=\begin{cases}
1,& \text{if   $|\hat f(j)-1|>u$;} \\
\hat f(j), & \text{if  $|\hat f(j)-1|\leqslant u$.}
\end{cases}
$$
We will denote the corresponding generating function of quantities
$M_k^{(u)}$ corresponding to sequence of $\hat f_u(j)$ as
$$
M_u(z):=\exp \left\{  \sum_{j=1}^{\infty}d_j{\frac{\hat f_u(j)}{j}}z^j \right\} ={{\exp \{ L_n^{(u)}(z)\} }p(z)  }
=\sum_{k=0}^{\infty} M_k^{(u)}z^k
$$
where
$$
L_n^{(u)}(z)=\sum_{j=1}^{n}d_j{\frac{\hat f_u(j)-1}{j}}z^j=
\sum_{\substack{j\leqslant n \\|\hat f(j)-1|\leqslant u}}d_j{\frac{\hat f(j)-1}{j}}z^j.
$$
It is clear that the generating functions $M(z)$ and $M_u(z)$ are
related by identity
$$
M(z)=M_u(z)\exp \left\{  \sum_{|\hat f(j)-1|>u}d_j{\frac{\hat
f(j)-1}{j}}z^j \right\}.
$$
The above identity leads to the relation between the
coefficients in the Taylor expansion of the corresponding functions
\begin{equation}
\label{M_n_in_terms_of_m_n_u}
M_n=\sum_{k=0}^n
M_k^{(u)}m_{n-k}^{(u)},
\end{equation}
where $m_k^{(u)}$ are the coefficients in the Taylor expansion of
the generating function
$$
m^{(u)}(z)=\sum_{j=0}^{\infty} m_j^{(u)}z^j=\exp\left\{ \sum_{|\hat
f(j)-1|>u} d_j{\frac{\hat f(j)-1}{j}}z^j \right\}.
$$
Differentiating $m^{(u)}(z)$ one can easily see that $m_k^{(u)}$
satisfy the recurrent relationship
$$
m_j^{(u)}= {\frac1{j}}\sum_{{\scriptstyle 1\leqslant k \leqslant
j} \atop {\scriptstyle |\hat f(k)-1| >u}}d_k(\hat
f(k)-1)m_{j-k}^{(u)},
$$
which is true for all $j\geqslant 1$. Hence
\begin{equation*}
\begin{split}
|m_j^{(u)}|&=\left| {\frac1{j}}\sum_{{\scriptstyle 1\leqslant k
\leqslant j} \atop {\scriptstyle |\hat f(k)-1| >u}}d_k(\hat
f(k)-1)m_{j-k}^{(u)}\right| \leqslant {\frac{2d^+}j}\sum_{k=0}^{\infty}|m_k^{(u)}|\\
&\leqslant  {\frac{2d^+}j}\exp\left\{ d^+\sum_{|\hat f(\ell)-1|>u}
{\frac{|\hat f(\ell)-1|}{\ell}} \right\}={\frac{2d^+}j}E^{d^+ /2}(u).
\end{split}
\end{equation*}
Clearly, the newly formed sequence $\hat f_u(j)$ has a property that
$\bigl|\hat f_u(j)-1\bigr|\leqslant u$ for all $j\geqslant 1$.
Therefore if we assume that $u\leqslant\eta\leqslant \delta(d^-,d^+,
\infty )$ then since $\rho(\infty)\leqslant u$ the conditions of the
Proposition \ref{prop_O(rho_exp)} will be satisfied, which gives us the
estimate $M_k^{(u)}=p_km^{(u)}(e^{-1/k})(1+O(u))$. We can now use
this asymptotic of $M_k^{(u)}$ to estimate the right hand side of
the identity (\ref{M_n_in_terms_of_m_n_u}) expressing $M_n$ in terms
of $M_j^{(u)}$ and $m_{j}^{(u)}$  and obtain
\begin{equation*}
\begin{split}
|M_n|&=\sum_{k=0}^n |M_k^{(u)}m_{n-k}^{(u)}|
\\
&\ll E^{d^+/2}(u)\sum_{k\leqslant
n/2}{\frac{p_k|m^{(u)}(e^{-1/k})|}{n-k}}
+\bigl|m^{(u)}(e^{-1/n})\bigr|p_n\left(
\sum_{k=0}^{\infty}|m_k^{(u)}| \right)
\\
 &\ll E^{d^+/2}(u)\bigl|m^{(u)}(e^{-1/n})\bigr|\left( {\frac1n}\sum_{k\leqslant n/2}
p_k\frac{\bigl|m^{(u)}(e^{-1/k})\bigr|}{\bigl|m^{(u)}(e^{-1/n})\bigr|}
 +p_n \right)
\\
&\ll |\exp \{ L_n^{(u)}(1) \} | E^{d^+ /2}(u)\left( \sum_{k\leqslant
n/2} p_k{\left( \frac{n}{k} \right) }^{ud^+} +p_n \right),
\end{split}
\end{equation*}
here we have used the estimate
$\frac{\bigl|m^{(u)}(e^{-1/k})\bigr|}{\bigl|m^{(u)}(e^{-1/n})\bigr|}=|\exp
\{ (L_k^{(u)}(e^{-1/k})-L_n^{(u)}(e^{-1/n})) \} | \leqslant
{\left( \frac{n}{k}\right)}^{ud^+ }$ for $k\leqslant n$ and
estimated the sum of $p_kk^{-ud^+}$ by means of Lemma \ref{sumpn},
assuming that $\eta$ fixed such that $u\leqslant\eta <
{\frac{d^-}{d^+}}$. We finally obtain
$$
|M_n|\ll   p_n|\exp \{ L_n^{(u)}(1) \} | E^{d^+ /2}(u) \leqslant
p_n|\exp \{  L_n(1) \} | E^{d^+ }(u).
$$
Thus we have proven that the theorem holds with $\eta =\min
\{\delta(d^-,d^+,\infty ),d^+/(2d^-)\}$.
\end{proof}

 Let us now find the generating function of the characteristic
function $g(t)$ of the distribution of $h_n(\sigma)$. Notice that if
$h_n(\sigma)$ is an additive function, then $\exp\{it
h_{n}(\sigma)\}$ is a multiplicative function of $\sigma$. Therefore
$$
g_n(t)=\mathbb{E}\exp\{ith_{n}(\sigma)\}=\sum_{\sigma \in S_n}
           { \exp \left\{ ith_n(\sigma) \right\} }
           \nu_{n,d}(\sigma)
           =\frac{\sum_{\sigma \in S_n}d(\sigma )\exp \left\{ ith_n(\sigma) \right\} }{\sum_{\sigma \in S_n}
d(\sigma )}.
       $$
Thus we can apply our theorems for mean values of multiplicative
functions for multiplicative function $f$ defined by $\hat
f(k)=\exp\{it\hat h_{n}(k)\}$ to analyze the asymptotic behavior of
the characteristic function of $h_{n}(\sigma)$.

\begin{proof}[Proof of Theorem \ref{clt1}]
 Putting
 $\hat f(k)=e^{it\hat{h}_{n}(k)}$ in Theorem \ref{meanM1}, for
$|t|\leqslant \delta L_{n,p}^{-1/p}$ we have
\begin{equation}
\label{phi}
\begin{split}
\phi_n (t)&:=\int_{-\infty}^{\infty}e^{itx}\,dF_n(x)
\\
&=e^{-itA(n)}\exp \{  L_n(1) \} \left( 1+
\sum_{j=1}^nd_j{\frac{\hat{f}(j)-1}j}\left( {\frac{p_{n-j}}{p_{n}}} -1 \right) +O(\rho^2)\right)
\\
&=e^{-t^2/2+O(|t|^3L_{3,n})} \left(
1+C_nit+O\biggl(|t|^2\sum_{j=1}^nd_j{\frac{a_{nk}^2}j}\left|
{\frac{p_{n-j}}{p_{n}}} -1 \right| \biggr)+O\bigl(
|t|^2L_{n,p}^{2/p}\bigr) \right)
\\
&=e^{-t^2/2}\left( 1+C_nit+ O\bigl(
 |t|^2(1+|t|^3)(L_{n,p}^{2/p}+L_{n,3} +L'_{n,2})\bigr)
\right).
\end{split}
\end{equation}

As in \cite{manst_berry_1998}, from Theorem \ref{thmeanM1} we deduce
the existence of some sufficiently small  $c=c(d^-,d^+)$, that if
 $|t|\leqslant cL_{n,3}^{-1}=:T$ then

\begin{equation}
\label{phibound} |\phi_n (t)|\ll e^{-c_1t^2},
\end{equation}
here $c_1=c_1(d^-,d^+)$ is some fixed positive constant. Applying
the generalized Esseen inequality (see for example
\cite{petrov_sums}), we obtain
$$
\sup_{x\in \sym R}\left| F_n(x)-\Phi (x)+{\frac1{\sqrt{2\pi}}}e^{-x^2/2}
          C_n\right|\ll \int_{- T}^T{\frac{\left| \phi_n(t)- e^{-t^2/2}\left( 1+C_nit
\right) \right|}{|t|}}\,dt+\frac1T.
$$
Representing the integral on the right hand side of this
inequality as a sum of integrals over the intervals $|t|\leqslant
\delta L_{n,p}^{-1/p}$ and $\delta L_{n,p}^{-1/p}<|t|\leqslant T$ and
 applying estimates (\ref{phi}) and (\ref{phibound}) in those intervals we obtain
 the proof of the theorem.

\end{proof}

\end{document}